\def \[{\begin{equation}}
\def \]{\end{equation}}
\theoremstyle{plain}
\newtheorem{The}{Theorem}[section]
\newtheorem{Lem}[The]{Lemma}
\newtheorem{Pro}[The]{Proposition}
\begin{document}

\setlength{\baselineskip}{20pt}
\renewcommand{\thefootnote}{\fnsymbol{footnote}}
\setcounter{footnote}{0}
\begin{center}{\Large \bf On the $d$-transversal number of cylindrical and toroidal grids}\footnote{This work is supported by NSFC\,(Grant No. 12271229).}

\vspace{4mm}

{Hailun Wu, Heping Zhang \footnote{The corresponding authors. E-mail address:
zhanghp@lzu.edu.cn (H. Zhang).}
}

\vspace{2mm}

\footnotesize{School of Mathematics and Statistics, Lanzhou University, Lanzhou, Gansu 730000, P. R. China}

\end{center}
\noindent {\bf Abstract}:
For a positive integer $d$, 
a $d$-transversal set of a graph $G$ is an edge subset $T\subseteq E(G)$ 
such that $|T\cap M|\geq d$ for every maximum matching $M$ of $G$. 
The $d$-transversal number of $G$, denoted by $\tau_d(G)$, 
is the minimum cardinality of a $d$-transversal set in $G$. 
It is NP-complete to determine the $d$-transversal number of a bipartite graph for any fixed $d\geq 1$. 
Ries et al. (Discrete Math. 310 (2010) 132-146) established the $d$-transversal number 
of rectangular grids $P_m\square P_n$.
In this paper, we consider cylindrical grids $P_m\square C_n$ and toroidal grids $C_m\square C_n$. 
We derive explicit expressions for the $d$-transversal numbers of 
$P_m\square C_n$ for $m\geq 1$ and even $n\geq 4$, 
or even $m\geq 2$ and $n=3$, and of $C_m\square C_n$ with even order, for $1\leq d\leq \frac{mn}{2}$. 
For the other cases, 
we obtain explicit expressions or bounds for their $d$-transversal numbers.

\vspace{2mm} \noindent{\bf Keywords}: Transversal; Maximum matching; Stable set; 
Cylindrical grid; Toroidal grid
\vspace{2mm}

{\setcounter{section}{0}
\section{Introduction}\setcounter{equation}{0}

In this paper, we consider simple, finite and undirected graphs.
Let $G$ be a graph with vertex set $V(G)$ and edge set $E(G)$.
A graph is called \textit{even} or \textit{odd} based on whether it
contains an even or odd number of vertices.
A \textit{matching} in $G$ is a subset $M\subseteq E(G)$ such that no two edges in $M$ share a common endpoint.
A matching is \textit{maximum} if its cardinality is the largest among all matchings in $G$. 
The \textit{matching number} of $G$, denoted by $\nu(G)$, 
is the size of a maximum matching in $G$. 
For a positive integer $d$, a \textit{$d$-transversal set} of $G$ is a subset $T\subseteq E(G)$
satisfying $|T\cap M|\geq d$ for every maximum matching $M$ of $G$. 
The smallest cardinality of a $d$-transversal set is called the \textit{$d$-transversal number} of $G$, 
denoted by $\tau_d(G)$, where $1\leq d\leq \nu(G)$.

The concept of a $d$-transversal set for maximum matchings in graphs was introduced by
Zenklusen et al. \cite{ZRB2009}. In particular, 
when a graph $G$ has a perfect matching or an almost perfect matching, 
the 1-transversal number $\tau_1(G)$ reduces to the \textit{matching preclusion number} $mp(G)$ of $G$. 
The \textit{conditional matching preclusion number} $mp_1(G)$ of $G$ is the smallest number of edges 
whose deletion results in a graph with a decreased matching number and no isolated vertices. 
The (condtional) matching preclusion number serves as a measure
of robustness in the event of edge failure in interconnection networks and has been extensively studied. 
For example, see \cite{BH2005, CL2009, LLZ2023, CLL2012, CL2007, CL2012, LHZ2016}. 
Thus, the $d$-transversal number of a graph can be regarded as a natural generalization 
of matching preclusion in a different direction.

Additionally, the related $d$-transversal problem has been extensively studied 
for various graph substructures in recent years, including stable sets \cite{BCP2012, LFR2024}, 
cliques \cite{LCC2004}, decycling sets \cite{BSL2002}, matroid bases \cite{CM2011}, etc.
Zenklusen et al. \cite{ZRB2009} proved that finding a minimum $d$-transversal set 
in a bipartite graph is \textit{NP-complete} for any fixed $d\geq 1$. 
Furthermore, they derived explicit formulas for the $d$-transversal number 
in regular bipartite graphs and complete graphs.

\begin{The}\rm{\cite{ZRB2009}} 
Let $G$ be a $k$-regular bipartite graph of order $2n$.
Then $\tau_d(G)=kd$ for $1\leq d\leq n$.
\end{The}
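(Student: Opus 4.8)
The plan is to exploit the fact that a $k$-regular bipartite graph admits a decomposition of its edge set into $k$ edge-disjoint perfect matchings. First I would record that, by König's edge-coloring theorem, the chromatic index of $G$ equals its maximum degree $k$, so $E(G)$ partitions into $k$ color classes $M_1,\dots,M_k$; since $G$ is $k$-regular, each color appears exactly once at every vertex, and hence each $M_i$ is a perfect matching. In particular $G$ has a perfect matching, whence $\nu(G)=n$ and the maximum matchings of $G$ are precisely its perfect matchings. This reduces the problem to counting the intersections of a transversal with perfect matchings.

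For the lower bound, let $T$ be any $d$-transversal set. Because each $M_i$ is a perfect (hence maximum) matching, the defining property of a $d$-transversal gives $|T\cap M_i|\ge d$ for every $i$. Since the $M_i$ partition $E(G)$, the sets $T\cap M_i$ are pairwise disjoint, so
\[
|T|=\sum_{i=1}^{k}|T\cap M_i|\ge kd,
\]
yielding $\tau_d(G)\ge kd$.

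For the upper bound, I would exhibit an explicit transversal of size $kd$. Writing the bipartition as $V(G)=A\cup B$ with $|A|=|B|=n$, fix any $d$ vertices $a_1,\dots,a_d\in A$ and let $T$ be the set of all edges incident to one of them. As these vertices lie on the same side and each has degree $k$, their incident edge sets are disjoint and $|T|=kd$. Any perfect matching $M$ saturates $a_1,\dots,a_d$ by $d$ distinct edges, each lying in $T$, so $|T\cap M|\ge d$; thus $T$ is a $d$-transversal and $\tau_d(G)\le kd$. Combining the two bounds gives $\tau_d(G)=kd$.

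The argument is short, and the only real subtlety is justifying the $1$-factorization step: one must invoke that regular bipartite graphs are class one (equivalently, apply Hall's theorem repeatedly to peel off perfect matchings) so that the $k$ color classes are genuinely perfect matchings rather than merely a partition into matchings of unequal sizes. Everything else is a direct double-counting argument, and the constraint $1\le d\le n$ is exactly what is needed for the upper-bound construction to be able to select $d$ vertices on one side.
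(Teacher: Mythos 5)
Your proof is correct. One thing worth knowing: the paper itself does not prove this theorem at all---it is quoted from Zenklusen et al.~\cite{ZRB2009}---but it does prove a generalization, Proposition 2.3: if a $k$-regular graph of order $2n$ has $k$ disjoint perfect matchings and $\alpha(G)\geq\lceil\frac{n}{2}\rceil$, then $\tau_d(G)=kd$ for $1\leq d\leq n$. Your argument has the same skeleton as that proof: the lower bound comes from intersecting any transversal with $k$ pairwise disjoint perfect matchings, and the upper bound from the edges incident to a stable set of strongly covered vertices (Proposition 2.1 in the paper). The differences are instructive. First, you supply the $1$-factorization via K\"onig's edge-coloring theorem; in the paper's proposition this is a hypothesis, and your observation is exactly why bipartite regularity suffices to meet it. Second, because one side of the bipartition is a stable set of size $n$, your single construction $T=\partial(a_1)\cup\cdots\cup\partial(a_d)$ covers the entire range $1\leq d\leq n$, whereas the paper, having only $\alpha(G)\geq\lceil\frac{n}{2}\rceil$ at its disposal, must split into two cases and handle $\lceil\frac{n}{2}\rceil\leq d\leq n$ by the complementary construction $T'=E(G)\setminus\partial(S_2)$ with $S_2$ stable of size $n-d$. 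So your proof is more elementary and self-contained for the bipartite case, while the paper's version buys applicability to non-bipartite regular graphs, which it needs later for the toroidal grids $C_m\square C_n$ of even order.
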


\begin{The}\rm{\cite{ZRB2009}} 
Let $d, n\geq 1$ be integers with $2d\leq n$ and $r=\lfloor\frac{n}{2}\rfloor-d$. 
For the complete graph $K_n$, $\tau_d(K_n)=\tbinom{n}{2}-\tbinom{2r+1}{2}$
if $d\leq \lfloor\frac{n}{2}\rfloor-\frac{2n-3}{5}$, and $\tau_d(K_n)=\tbinom{n-r}{2}$ otherwise.
\end{The}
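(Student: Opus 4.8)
The plan is to pass to the complement edge set and recognize the resulting problem as a classical extremal question. Write $\nu=\nu(K_n)=\lfloor n/2\rfloor$ and, for a candidate transversal $T$, set $S=E(K_n)\setminus T$. Every maximum matching $M$ has exactly $\nu$ edges, so $|T\cap M|+|S\cap M|=\nu$, and the requirement $|T\cap M|\ge d$ for all maximum $M$ is equivalent to $|S\cap M|\le \nu-d=r$ for all maximum $M$. Minimizing $|T|$ is therefore the same as maximizing $|S|$ subject to this constraint, and $\tau_d(K_n)=\binom{n}{2}-\max|S|$.

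The first key step is to show that the constraint ``$|S\cap M|\le r$ for every maximum matching $M$'' is equivalent to $\nu(H)\le r$, where $H=(V(K_n),S)$ is the spanning subgraph with edge set $S$. One direction is immediate, since $S\cap M$ is a matching of $H$, so $|S\cap M|\le\nu(H)$. For the converse I would use completeness of $K_n$: any matching $N$ with $|N|\le\nu$ extends to a maximum matching of $K_n$, because its $2|N|\le 2\nu\le n$ endpoints leave a set of free vertices that induces a complete graph, on which one greedily completes $N$ to size $\nu$ (leaving one vertex uncovered when $n$ is odd). Applying this to a maximum matching of $H$ shows that if $\nu(H)\ge r+1$ then some maximum matching of $K_n$ meets $S$ in more than $r$ edges. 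Hence the optimization reduces exactly to determining the maximum number of edges of a graph on $n$ vertices with matching number at most $r$.

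This is answered by the classical theorem of Erd\H{o}s and Gallai: such a graph has at most $\max\{\binom{2r+1}{2},\,\binom{r}{2}+r(n-r)\}$ edges, the two bounds being attained by $K_{2r+1}$ together with $n-2r-1$ isolated vertices and by $K_r\vee\overline{K_{n-r}}$, respectively. Both extremal graphs fit on $n$ vertices and have matching number exactly $r$ in our range, since $r\le\lfloor n/2\rfloor$ forces $2r\le n$ (and $d\ge1$ forces $2r+1\le n-1$). Substituting gives $\tau_d(K_n)=\binom{n}{2}-\max\{\binom{2r+1}{2},\,\binom{r}{2}+r(n-r)\}$.

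It remains to unwind the two cases arithmetically. A direct computation gives the identity $\binom{n}{2}-\binom{r}{2}-r(n-r)=\binom{n-r}{2}$, so the second extremal configuration yields $\tau_d(K_n)=\binom{n-r}{2}$, while the first yields $\binom{n}{2}-\binom{2r+1}{2}$. Comparing the two extremal values, $\binom{2r+1}{2}\ge\binom{r}{2}+r(n-r)$ simplifies (clearing denominators and, for $r\ge1$, dividing by $r$) to $5r+3\ge 2n$, i.e. $r\ge\frac{2n-3}{5}$, equivalently $d\le\lfloor n/2\rfloor-\frac{2n-3}{5}$; in this regime $K_{2r+1}$ wins and we obtain the first formula, and otherwise the second, the degenerate case $r=0$ being immediate. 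I expect the main obstacle to be the equivalence in the second paragraph---making the extension argument valid uniformly for even and odd $n$---together with the careful invocation of Erd\H{o}s--Gallai to ensure the stated extremal graphs really realize matching number $r$ on exactly $n$ vertices; the closing arithmetic is then routine bookkeeping.
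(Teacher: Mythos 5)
A point of order first: the paper does not actually prove this statement. It appears as Theorem 1.2, quoted from \cite{ZRB2009} as background motivation for the grid results, so there is no in-paper proof to compare yours against; I can only assess your argument on its own merits, and it is correct and complete. The complementation step is sound: with $S=E(K_n)\setminus T$ and $r=\lfloor n/2\rfloor-d$, the transversal condition is equivalent to $\nu(H)\le r$ for $H=(V(K_n),S)$, and your extension argument (any matching of size at most $\lfloor n/2\rfloor$ in $K_n$ extends to a maximum matching, for either parity of $n$) is exactly what makes the converse direction work; it uses $r+1\le\lfloor n/2\rfloor$, which follows from $d\ge1$. The hypotheses $d\ge1$ and $2d\le n$ give $n\ge 2r+2$, so both Erd\H{o}s--Gallai extremal graphs, namely $K_{2r+1}$ together with $n-2r-1$ isolated vertices and $K_r\vee\overline{K_{n-r}}$, fit on $n$ vertices with matching number exactly $r$; hence the maximum of $|S|$ equals $\max\{\binom{2r+1}{2},\,\binom{r}{2}+r(n-r)\}$ rather than merely being bounded by it. The closing arithmetic also checks out: $\binom{n}{2}-\binom{r}{2}-r(n-r)=\binom{n-r}{2}$, and for $r\ge1$ the comparison $\binom{2r+1}{2}\ge\binom{r}{2}+r(n-r)$ reduces to $5r+3\ge 2n$, i.e.\ $d\le\lfloor n/2\rfloor-\frac{2n-3}{5}$, with ties and the degenerate case $r=0$ harmless because the two formulas then coincide. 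This reduction of the transversal problem on $K_n$ to the Erd\H{o}s--Gallai extremal bound for graphs of bounded matching number is the natural (and standard) route to the formula, so your proposal fills in precisely the proof the paper omits.
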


Subsequently, Ries et al. \cite{RBZ2010} presented an $O(n^5)$ polynomial-time 
algorithm for finding minimum $d$-transversal sets in a tree of order $n$, 
and established explicit formulas for the $d$-transversal number of 
rectangular grids $P_m\square P_n$ (see \cite{RBZ2010} for details). 
This motivates us to study the $d$-transversal numbers of cylindrical grids 
$P_m\square C_n$ and toroidal grids $C_m\square C_n$.

In statistical thermodynamics, 
to study the adsorption of diatomic molecules (dimers) on crystalline surfaces, 
physicists often seek to determine the number of dimer coverings in a rectangular region. 
Such a dimer covering corresponds to a perfect matching in a rectangular grid. 
Fisher \cite{F1961}, Temperley \cite{TF1961}, and Kasteleyn \cite{K1961} 
independently solved the dimer covering problem for rectangular grids $P_m\square P_n$. 
Later, McCoy and Wu \cite{MW1973} and Kasteleyn \cite{K1961} extended the enumeration 
of perfect matchings to cylindrical grids $P_m\square C_n$ and 
toroidal grids $C_m\square C_n$, respectively.

The rest of the paper is organized as follows. 
In Section 2, we introduce useful notations and terminology. 
We also present some properties of $d$-transversal sets in graphs and 
a condition for computing the $d$-transversal number of (non-bipartite) regular graphs (Proposition 2.3).
In Section 3, we consider cylindrical grids $P_m\square C_n$. 
The first two Subsections determine the $d$-transversal number of bipartite 
cylindrical grids $P_m\square C_n$ for $1\leq d\leq \frac{mn}{2}$. 
In Subsection 3.3, for even $m\geq 2$ and odd $n\geq 3$,
we determine $\tau_d(P_m\square C_n)$ for $1\leq d\leq n-1$ or $d=\frac{mn}{2}$. 
When $n\leq d< \frac{mn}{2}$, we give a sharp lower bound for $\tau_{d}(P_m\square C_n)$. 
Specifically, we derive an explicit formula to $\tau_d(P_m\square C_3)$ for $1\leq d\leq \frac{3m}{2}$. 
In Subsection 3.4, for odd $m\geq 3$ and odd $n\geq 3$, 
we determine $\tau_d(P_m\square C_n)$ when $\frac{m-3}{2}+n\leq d\leq \frac{mn-1}{2}$. 
For $\frac{m-1}{2}\leq d< \frac{m-5}{2}+n$, we establish a sharp upper bound for $\tau_{d}(P_m\square C_n)$. 
In Section 4, applying Proposition 2.3, we determine the $d$-transversal number of 
$C_m\square C_n$ with even order for $1\leq d\leq \frac{mn}{2}$. 
Furthermore, for odd $m\geq 3$ and odd $n\geq 3$, 
we obtain $\tau_d(C_m\square C_n)$ for $d=1$ or $\frac{m-1}{2}\leq d\leq \frac{mn-1}{2}$, 
and provide upper and lower bounds for $\tau_{d}(C_m\square C_n)$ in the remaining cases.

\section{Preliminaries}

For undefined graph-theoretic terminology and notation, we refer to \cite{BM2008, LM2024}.
Let $G$ be a graph. Given an edge subset $F\subseteq E(G)$, 
we denote by $G-F$ the subgraph obtained by deleting all edges in $F$ from $G$. 
For any nonempty proper subset $X$ of $V(G)$, 
let $\partial(X)$ denote the set of edges with one endpoint in $X$ and the other in $V(G)\backslash X$. 
In particular, for a vertex $v\in V(G)$, we write $\partial(v)$ instead of $\partial(\{v\})$ for simplicity.
For a vertex subset $X\subseteq V(G)$, $G[X]$ denotes the subgraph induced by $X$.
Given two subsets $X, Y \subseteq V(G)$, 
$E[X, Y]$ is the set of edges with one endpoint in $X$ and the other in $Y$. 
A \textit{spanning} subgraph of $G$ is a subgraph containing all vertices of $G$.
A graph is \textit{bipartite} if its vertex set can be partitioned into two classes 
$W$ and $B$ such that every edge connects a vertex in $W$ to a vertex in $B$. 
A \textit{path} and a \textit{cycle} with $k$ vertices are denoted by $P_k$ and $C_k$, respectively. 
The \textit{ceiling} function $\lceil x\rceil$ denotes the smallest integer not less than $x$, 
and the \textit{floor} function $\lfloor x\rfloor$ denotes the greatest integer not greater than $x$.

The \textit{Cartesian product} of two graphs $G$ and $H$ is the graph $G\square H$ 
whose vertex set is $V(G)\times V(H)$, and whose edge set consists of all pairs $(g_1, h_1)(g_2, h_2)$ 
such that either $g_1=g_2$ and $h_1h_2\in E(H)$, or $g_1g_2\in E(G)$ and $h_1=h_2$.
For integers $m\geq 1$ and $n\geq 3$, 
a \textit{cylindrical grid} is the Cartesian product $P_m\square C_n$. 
It is naturally embedded on a cylinder such that each face, 
except the two open ends, is bounded by a quadrilateral. 
More precisely, $P_m\square C_n$ is a graph with vertex set $\{x_{ij} |1\leq i\leq m$ and $1\leq j\leq n\}$,
its edge set consists of horizontal edges $h_{ij}=x_{ij}x_{i,j+1}$ 
for $1\leq i\leq m$ and $1\leq j\leq n$ (indices modulo $n$), 
and vertical edges $v_{ij}=x_{ij}x_{i+1,j}$ for $1\leq j\leq n$ and $1\leq i\leq m-1$. 
For example, $P_3\square C_4$ is illustrated in Fig.\hspace{0.06cm}\ref{A}(a).

For integers $m\geq 3$ and $n\geq 3$, a \textit{toroidal grid} is the Cartesian product $C_m\square C_n$, 
embedded on a torus with all faces bounded by quadrilaterals. 
Its vertex set $V(C_m\square C_n)$ coincides with $V(P_m\square C_n)$, 
and its edge set is $E(P_m\square C_n)\cup\{v_{mj}=x_{mj}x_{1j}|j=1,2, \dots, n\}$. 
An example $C_3\square C_4$ is shown in Fig.\hspace{0.06cm}\ref{A}(b).

\begin{figure}[!htbp]
  \centering
  \subfigure{
  \begin{minipage}{6cm}
  \centering
  \includegraphics[totalheight=3.7cm]{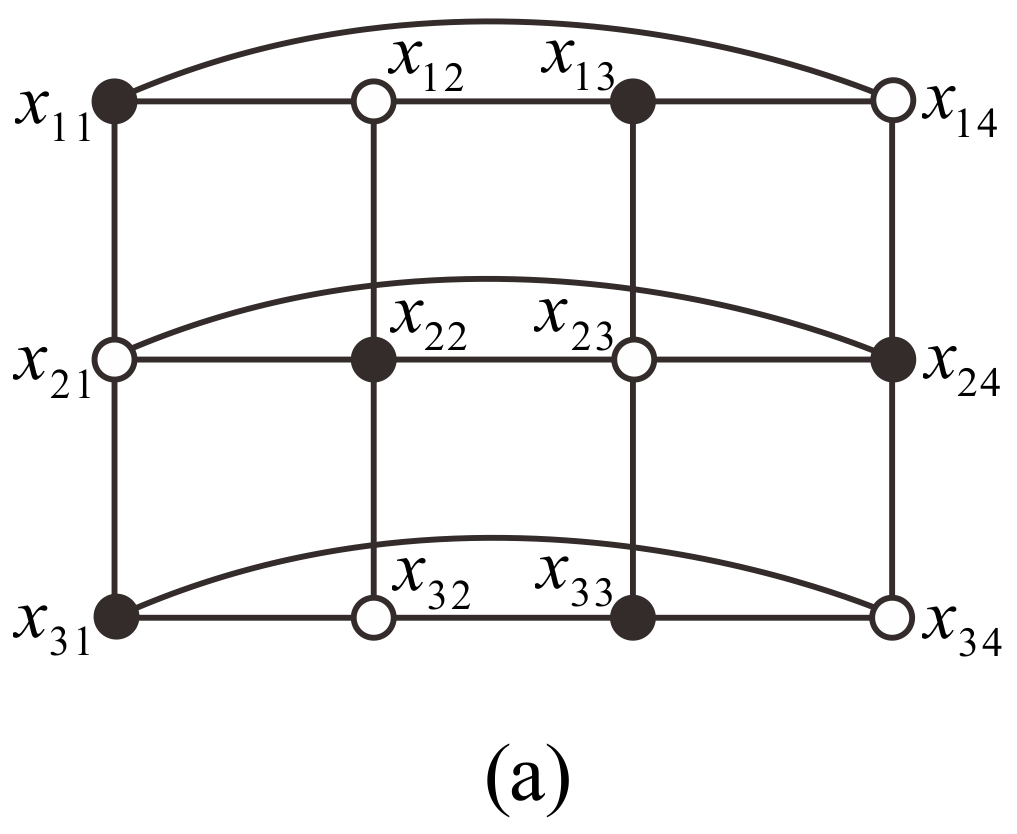}
   \end{minipage}%
   }
  \hspace{0.5cm}
  \subfigure{
  \begin{minipage}{6cm}
  \centering
  \includegraphics[totalheight=3.7cm]{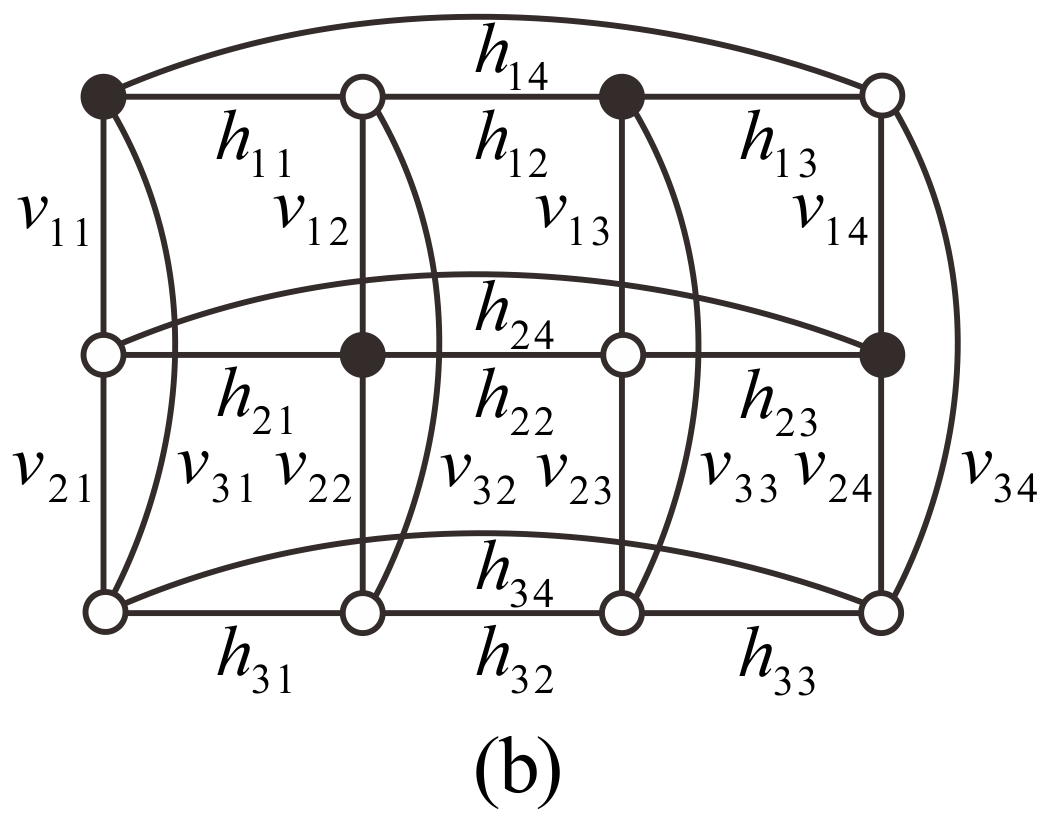}
   \end{minipage}
   }
  \caption{\label{A}\small{(a)$P_3\square C_4$, (b)$C_3\square C_4$.}}
\end{figure}

For every maximum matching $M$ of a graph $G$, let
$$\begin{aligned}
 F_0(G)=\{uv\in E(G)|uv\notin M\},~~ F_1(G)=\{uv\in E(G)|uv\in M\}.
\end{aligned}$$ 
Clearly, if $|F_1(G)|\geq d$, then $\tau_{d}(G)=d$.
Let $M$ be a matching of $G$. A vertex $v$ is \textit{covered} by $M$ if there exists an edge $uv\in M$. 
A matching in $G$ is \textit{perfect} if it covers all vertices of $G$. 
An \textit{almost perfect} matching is a matching covering all but one vertex of $G$. 
A vertex $v$ is \textit{strongly covered} if every maximum matching covers $v$. 
The set of all strongly covered vertices of $G$ is denoted by $S(G)$. 
Thus, if $G$ has a perfect matching, then $S(G)=V(G)$.


A \textit{stable set} $S$ of $G$ is a set of vertices no two of which are adjacent.
The maximum cardinality of a stable set in $G$ is called the \textit{stability number} of $G$, 
denoted by $\alpha(G)$. As we will see, 
stable sets play an important role in constructing a $d$-transversal set of $G$.


\begin{Pro}\rm{\cite{ZRB2009}}
Let $\{v_1, v_2, \dots, v_d\}$ be a stable set of $G$ such that each $v_i$ is strongly covered. 
Then $T=\partial(v_1)\cup \partial(v_2)\cup \dots \cup \partial(v_d)$ is a $d$-transversal set of $G$.
\end{Pro}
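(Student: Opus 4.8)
The plan is to show directly that every maximum matching meets $T$ in at least $d$ edges, by exhibiting $d$ pairwise distinct such edges. First I would fix an arbitrary maximum matching $M$ of $G$ and, for each $i\in\{1,\dots,d\}$, extract one edge $e_i\in M$ incident with $v_i$. The defining inequality $|T\cap M|\ge d$ then follows as soon as these edges are shown to be distinct members of $T\cap M$, and since $M$ is arbitrary this yields that $T$ is a $d$-transversal set.

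The extraction step uses the two hypotheses in turn. Because $v_i$ is strongly covered, the maximum matching $M$ covers $v_i$, so there is an edge $e_i\in M$ having $v_i$ as an endpoint. By definition $e_i\in\partial(v_i)\subseteq T$, and $e_i\in M$, so $e_i\in T\cap M$. Doing this for every index produces edges $e_1,\dots,e_d$, each lying in $T\cap M$.

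The only point that genuinely requires the stable-set hypothesis — and the step I expect to carry the whole argument — is the distinctness of $e_1,\dots,e_d$. I would argue by contradiction: if $e_i=e_j$ for some $i\ne j$, then this single edge is incident with both $v_i$ and $v_j$, forcing $e_i=v_iv_j\in E(G)$ and hence making $v_i$ and $v_j$ adjacent, contrary to the assumption that $\{v_1,\dots,v_d\}$ is a stable set. Therefore the $e_i$ are pairwise distinct, giving $|T\cap M|\ge|\{e_1,\dots,e_d\}|=d$. The proof is short precisely because stability is exactly the property that prevents two distinct $v_i$ from being saturated by one shared matching edge; without it the count of guaranteed edges could fall below $d$, so I would present this distinctness argument as the conceptual heart of the statement rather than treat it as routine.
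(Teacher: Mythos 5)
Your proof is correct and complete: strong coverage guarantees one edge $e_i\in M\cap\partial(v_i)$ for each $i$, and stability of $\{v_1,\dots,v_d\}$ is exactly what forces $e_1,\dots,e_d$ to be pairwise distinct, giving $|T\cap M|\ge d$ for every maximum matching $M$. The paper itself states this proposition without proof (citing Zenklusen et al.), and your argument is the standard one that any proof of this fact would follow, so there is nothing to compare beyond noting that you have supplied the missing details correctly.
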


\begin{Pro}
Let $G$ be a graph with at least one edge. Then
$\tau_1(G)< \tau_2(G)< \cdots < \tau_{\nu(G)}(G)$.
\end{Pro}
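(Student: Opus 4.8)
The plan is to establish first the weak monotonicity $\tau_{d-1}(G)\le\tau_d(G)$, which is immediate, and then upgrade each inequality to a strict one. For the weak part, I would observe that any $d$-transversal set $T$ automatically satisfies $|T\cap M|\ge d\ge d-1$ for every maximum matching $M$, hence is a $(d-1)$-transversal set as well; taking $T$ to be minimum then gives $\tau_{d-1}(G)\le\tau_d(G)$ for every $d$ with $2\le d\le\nu(G)$.

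The crux is therefore strictness, and my strategy is to show that from any minimum $d$-transversal set one may delete a single edge and still retain a $(d-1)$-transversal set. Concretely, fix $d$ with $2\le d\le\nu(G)$ and let $T$ be a minimum $d$-transversal set, so that $|T|=\tau_d(G)$. I first argue $T\neq\emptyset$: since $G$ has an edge we have $\nu(G)\ge 1$, so $G$ possesses a maximum matching $M$ with $|M|\ge 1$, and the empty set fails the requirement $|\emptyset\cap M|=0\ge d$ because $d\ge 1$; hence $\tau_d(G)\ge 1$. Now choose any edge $e\in T$ and put $T'=T\setminus\{e\}$. For an arbitrary maximum matching $N$, since $N$ contains $e$ at most once, we get $|T'\cap N|\ge |T\cap N|-1\ge d-1$, where the last inequality uses that $T$ is a $d$-transversal set. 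Thus $T'$ is a $(d-1)$-transversal set of cardinality $\tau_d(G)-1$.

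Combining these facts yields $\tau_{d-1}(G)\le |T'|=\tau_d(G)-1<\tau_d(G)$ for every $2\le d\le\nu(G)$, and chaining these inequalities produces the full strictly increasing sequence $\tau_1(G)<\tau_2(G)<\cdots<\tau_{\nu(G)}(G)$. The argument is short, and the only point that genuinely requires care is the nonemptiness of a minimum $d$-transversal set; this is precisely where the hypothesis that $G$ has at least one edge is used, since otherwise $\nu(G)=0$ and the claimed chain is empty.
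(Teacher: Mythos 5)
Your proposal is correct and follows essentially the same route as the paper's proof: weak monotonicity from the definition, then strictness by deleting a single edge from a minimum transversal set (the paper phrases this as passing from a minimum $(d+1)$-transversal set down to a $d$-transversal set, which is the same argument index-shifted). Your explicit verification that the minimum transversal set is nonempty, using the hypothesis that $G$ has an edge, is a small point of care the paper leaves implicit, but it does not constitute a different approach.
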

\begin{proof}
For any $1\leq d\leq \nu(G)-1$, a $(d+1)$-transversal set of $G$ is also a $d$-transversal set, 
implying $\tau_{d}(G)\leq \tau_{d+1}(G)$. 
Let $T$ be a minimum $(d+1)$-transversal set of $G$. By definition, 
for any $uv\in T$, $T\backslash \{uv\}$ is a $d$-transversal set. 
Thus, $\tau_{d}(G)\leq |T\backslash \{uv\}|< \tau_{d+1}(G)$.
\end{proof}

\noindent\textbf{Remark.} The construction in Proposition 2.1 is not necessarily minimum. 
For example, consider the graph $G$ in Fig.\hspace{0.05cm}\ref{B}, 
which has a perfect matching and each edge is contained in some perfect matching.
Consequently, $S(G)=V(G)$ and $F_0(G)=F_1(G)=\emptyset$. Clearly, $\tau_1(G)=2$. 
By Proposition 2.2, $\tau_2(G)\geq 3$. Applying Proposition 2.1, 
$\partial(v_2)\cup \partial(v_3)$ forms a $2$-transversal set of size 4. 
Let $T=\{v_2v_5, v_3v_6, v_4v_7\}$. For any perfect matching $M$ of $G$, 
regardless of how $v_1$ is covered by $M$, we have $|M\cap T|=2$. 
Thus, $T$ is a minimum $2$-transversal set of size 3.

\begin{figure}[!htbp]
\begin{center}
\includegraphics[totalheight=3.5cm]{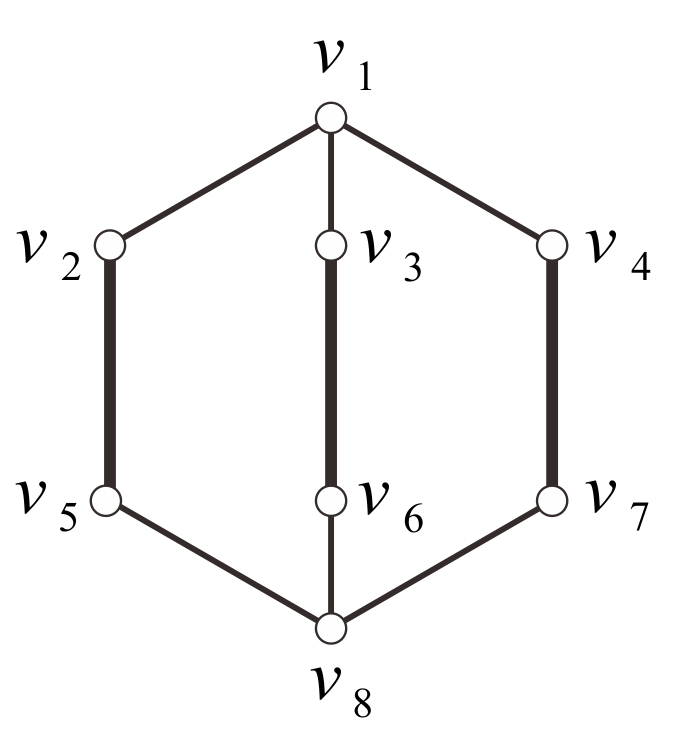}
\caption{\label{B}\small{A minimum $2$-transversal set $T$ (thick edges) of $G$.}}
\end{center}
\end{figure}

The next result provides the $d$-transversal number for regular graphs. 
It can also be considered as a generalization of Theorem 1.1.

\begin{Pro}
Let $G$ be a $k$-regular graph of order $2n$. 
If $G$ has $k$ disjoint perfect matchings and $\alpha(G)\geq \lceil \frac{n}{2}\rceil$, 
then $\tau_{d}(G)=kd$ for $1\leq d\leq n$.
\end{Pro}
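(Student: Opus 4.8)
The plan is to prove the two inequalities $\tau_d(G)\ge kd$ and $\tau_d(G)\le kd$ separately, with the disjoint perfect matchings supplying the lower bound and the large stable set supplying the upper bound.

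For the lower bound I would use the $k$ disjoint perfect matchings directly. Since $G$ is $k$-regular of order $2n$ it has exactly $kn$ edges, so $k$ edge-disjoint perfect matchings $M_1,\dots,M_k$, each of size $n$, in fact partition $E(G)$. Because $G$ has a perfect matching, every maximum matching is perfect, so each $M_i$ is a maximum matching. Hence any $d$-transversal set $T$ satisfies $|T\cap M_i|\ge d$ for every $i$, and since the $M_i$ are disjoint, $|T|\ge\sum_{i=1}^{k}|T\cap M_i|\ge kd$. This yields $\tau_d(G)\ge kd$ with essentially no work, and it is the only place the disjoint-perfect-matchings hypothesis is needed.

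For the upper bound I would produce, for each $d$, a $d$-transversal set of size exactly $kd$ from a maximum stable set $S$ with $|S|=\alpha(G)\ge\lceil n/2\rceil$; all vertices are strongly covered since $G$ has a perfect matching, so $S(G)=V(G)$. I expect to need two complementary constructions. When $d\le\lceil n/2\rceil$, I would take any $d$ vertices $v_1,\dots,v_d\in S$ and set $T=\partial(v_1)\cup\cdots\cup\partial(v_d)$; by Proposition 2.1 this is a $d$-transversal set, and since $S$ is stable and $G$ is $k$-regular the sets $\partial(v_i)$ are pairwise disjoint, so $|T|=kd$. When $d\ge\lfloor n/2\rfloor$ I would instead pass to the complement: choose a stable set $\{w_1,\dots,w_{n-d}\}\subseteq S$, which is possible since $n-d\le n-\lfloor n/2\rfloor=\lceil n/2\rceil\le\alpha(G)$, and set $T=E(G)\setminus\bigl(\partial(w_1)\cup\cdots\cup\partial(w_{n-d})\bigr)$. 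For any perfect matching $M$, exactly one edge of $M$ meets each $w_j$ and no edge meets two of them, so $|M\cap(E(G)\setminus T)|=n-d$ and hence $|M\cap T|=n-(n-d)=d$; thus $T$ is a $d$-transversal set, and $|T|=kn-k(n-d)=kd$.

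Finally I would observe that the ranges $[1,\lceil n/2\rceil]$ and $[\lfloor n/2\rfloor,n]$ overlap, since $\lfloor n/2\rfloor\le\lceil n/2\rceil$, and together cover all of $1\le d\le n$, so one of the two constructions always applies and gives $\tau_d(G)\le kd$; combined with the lower bound this proves $\tau_d(G)=kd$. The main obstacle, and the key idea, is the complementary construction for large $d$: recognizing that deleting the edge-boundary of a small stable set leaves a transversal (rather than trying to build the transversal directly from a stable set of size $d$, which may exceed $\alpha(G)$), and verifying that the floor/ceiling split of the hypothesis $\alpha(G)\ge\lceil n/2\rceil$ is exactly what makes the two regimes meet.
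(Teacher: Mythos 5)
Your proposal is correct and takes essentially the same route as the paper: the lower bound from the $k$ disjoint perfect matchings, the set $\partial(v_1)\cup\cdots\cup\partial(v_d)$ over $d$ stable vertices for small $d$, and the complementary set $E(G)\setminus\bigl(\partial(w_1)\cup\cdots\cup\partial(w_{n-d})\bigr)$ for large $d$ are exactly the paper's constructions. The only cosmetic differences are that you split the two regimes at $\lfloor n/2\rfloor$ rather than $\lceil n/2\rceil$ and verify the complementary construction by directly counting matching edges incident to the $w_j$, where the paper argues via the matching number of the spanning subgraph with edge set $\partial(S_2)$.
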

\begin{proof}
Let $S$ be a maximum stable set of $G$. 
For $1\leq d\leq \lceil \frac{n}{2}\rceil$, 
construct a set $T$ by selecting the edge subset $\partial(S_1)$, 
where $S_1\subseteq S$ and $|S_1|=d$. 
By Proposition 2.1, $T$ is a $d$-transversal set. 
Thus, $\tau_{d}(G)\leq |T|=kd$. 
For $\lceil \frac{n}{2}\rceil\leq d\leq n$, 
let $G'$ be a spanning subgraph of $G$ satisfying $E(G)=\partial(S_2)$, 
where $S_2\subseteq S$ and $|S_2|=n-d$. 
Since $G$ contains perfect matchings, $\nu(G')= n-d$. 
Define $T'=E(G)\backslash \partial(S_2)$. 
For every perfect matching $M$ of $G$, we have $|T'\cap M|\geq d$. 
Thus, $T'$ is a $d$-transversal set with $|T'|=nk-k(n-d)=kd$. 
Hence, $\tau_{d}(G)\leq kd$. Finally, 
since $G$ contains $k$ disjoint perfect matchings, 
any $d$-transversal set must intersect each perfect matching in at least $d$ edges. 
This implies $\tau_{d}(G)\geq kd$.
\end{proof}

Note that the condition $\alpha(G)\geq \lceil \frac{n}{2}\rceil$ in Proposition 2.3 is necessary. 
Consider the complete graph $K_8$, we have $\alpha(K_8)=1<\lceil \frac{8}{4}\rceil=2$. 
Furthermore, $K_8$ admits $7$ disjoint perfect matchings \cite{ABS1990}. 
However, Theorem 1.2 implies $\tau_{2}(K_8)=15>14$.

\section{$d$-transversal number of $P_m\square C_n$}

We consider the following four cases based on the parity of $m$ and $n$.

\subsection{$m$ and $n$ are even}


\begin{Lem}
For even $m\geq2$ and even $n\geq 4$, 
$\tau_{d}(P_m\square C_n)=3d$ if $1\leq d\leq n$.
\end{Lem}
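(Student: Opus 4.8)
The plan is to sandwich $\tau_d(P_m\square C_n)$ between the lower bound $3d$ and the upper bound $3d$. Throughout write $G=P_m\square C_n$. Since $m$ and $n$ are both even, $C_n$ and hence $G$ is bipartite, and $G$ admits a perfect matching; consequently every maximum matching of $G$ is perfect, $\nu(G)=\tfrac{mn}{2}$, and $S(G)=V(G)$, so every vertex is strongly covered. Note also that the only vertices of degree $3$ are the $2n$ boundary vertices lying in rows $1$ and $m$, while all interior vertices have degree $4$.

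For the lower bound I would exhibit three pairwise disjoint perfect matchings. Using that $n$ is even, let $M_1=\{h_{ij}: 1\le i\le m,\ j\text{ odd}\}$ and $M_2=\{h_{ij}: 1\le i\le m,\ j\text{ even}\}$ be the two complementary all-horizontal matchings, each perfect because every row is an even cycle. Using that $m$ is even, let $M_3=\{v_{ij}: 1\le j\le n,\ i\text{ odd}\}$ pair the rows vertically within each column. These are pairwise edge-disjoint, since $M_3$ uses only vertical edges while $M_1,M_2$ use only horizontal ones, and $M_1\cap M_2=\emptyset$ within each cycle. As each $M_k$ is a maximum matching, any $d$-transversal set $T$ satisfies $|T\cap M_k|\geq d$ for $k=1,2,3$; disjointness then forces $|T|\geq\sum_{k=1}^{3}|T\cap M_k|\geq 3d$, so $\tau_d(G)\geq 3d$.

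For the upper bound I would build the transversal from boundary vertices via Proposition 2.1. When $m\geq 4$, rows $1$ and $m$ are non-adjacent and each induces an even cycle $C_n$, so their union hosts a stable set of size $\tfrac{n}{2}+\tfrac{n}{2}=n$; when $m=2$ the graph is the bipartite prism, one of whose two colour classes is a stable set of size $n$ consisting of degree-$3$ vertices. Hence in every case, for $1\leq d\leq n$ one may choose a stable set $\{v_1,\dots,v_d\}$ consisting entirely of degree-$3$ vertices. Since every vertex is strongly covered, Proposition 2.1 shows that $T=\partial(v_1)\cup\cdots\cup\partial(v_d)$ is a $d$-transversal set; because the $v_i$ are pairwise non-adjacent, the sets $\partial(v_i)$ are disjoint and each has size $3$, giving $|T|=3d$ and therefore $\tau_d(G)\leq 3d$. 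Combining the two bounds yields $\tau_d(G)=3d$.

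The routine part is the explicit verification that $M_1,M_2,M_3$ are perfect and pairwise disjoint and that the chosen $v_i$ all have degree $3$. The one point needing care is the stable-set count, namely checking that the boundary vertices genuinely host an independent set as large as $n$ in both the $m=2$ and the $m\geq 4$ regimes, since this is exactly what restricts the valid range to $d\leq n$. No genuinely hard obstacle arises: the non-regularity of $G$ for $m>2$, which blocks a direct appeal to Proposition 2.3, is circumvented by confining the stable set to the degree-$3$ boundary and by supplying the three disjoint perfect matchings explicitly.
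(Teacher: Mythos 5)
Your proposal is correct and follows essentially the same route as the paper: the identical three pairwise disjoint perfect matchings (two all-horizontal, one all-vertical) for the lower bound $3d$, and Proposition 2.1 applied to a stable set of $n$ degree-$3$ boundary vertices for the upper bound. The only cosmetic difference is that you treat $m=2$ separately via a colour class of the prism, whereas the paper's single stable set $\{x_{11},x_{13},\dots,x_{1,n-1},x_{m2},x_{m4},\dots,x_{mn}\}$ (odd columns in row $1$, even columns in row $m$) already works uniformly for all even $m\geq 2$.
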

\begin{proof}
Let $1\leq d\leq n$. Consider a stable set 
$S=\{x_{11}, x_{13},\dots, x_{1,{n-1}}, x_{m2}, x_{m4},\dots, x_{mn}\}$ in $P_m\square C_n$. 
By Proposition 2.1, we construct a $d$-transversal set $T$ by selecting the edge subset $\partial(S')$, 
where $S'\subseteq S$ and $|S'|=d$. 
This implies $\tau_{d}(P_m\square C_n)\leq |T|=3d$. 
Furthermore, $P_m\square C_n$ contains three disjoint perfect matchings $M_1$, $M_2$, $M_3$ defined as: 
$M_1=\{h_{i1}, h_{i3}, \dots, h_{i,n-1}| i=1,2,\dots, m\}$, 
$M_2=\{h_{i2}, h_{i4}, \dots, h_{in}| i=1,2,\dots, m\}$, 
$M_3=\{v_{1j}, v_{3j}, \dots, v_{m-1,j}| j=1,2,\dots, n\}$. 
Since any $d$-transversal set $T'$ must satisfy $|T'\cap M_i|\geq d$ for $1\leq i\leq 3$, we have
$$\begin{aligned}
 \tau_{d}(P_m\square C_n)\geq \sum\limits_{i=1}\limits^{3}|T'\cap M_i| \geq 3d.
\end{aligned}$$
Thus, $\tau_{d}(P_m\square C_n)=3d$.
\end{proof}



\begin{Lem}
For even $m\geq 4$ and even $n\geq 4$, 
$\tau_{d}(P_m\square C_n)=4d-n$ if $n+1\leq d\leq \frac{mn}{2}$.
\end{Lem}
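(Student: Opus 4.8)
The plan is to work with the complement of a transversal. Since $m$ and $n$ are even, $G:=P_m\square C_n$ is bipartite with a balanced bipartition and hence has a perfect matching; thus every maximum matching is perfect, $\nu(G)=\frac{mn}{2}$, and $|E(G)|=(2m-1)n$. Writing $k=\frac{mn}{2}-d$, a set $T$ is a $d$-transversal set iff its complement $F=E(G)\setminus T$ meets every perfect matching in at most $k$ edges, i.e. $\mu(F)\le k$, where $\mu(F)$ denotes the largest number of edges of $F$ contained in a single perfect matching. Consequently $\tau_d(G)=|E(G)|-\max\{|F|:\mu(F)\le k\}$, and both inequalities will follow once I show that this maximum equals $4k=2mn-4d$.

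For the upper bound I would take $F=\partial(S')$ for a stable set $S'$ of $k$ vertices all lying in the interior rows $2,\dots,m-1$ (each of degree $4$). Such a set exists because the interior induces $P_{m-2}\square C_n$, whose maximum stable set has size $\frac{(m-2)n}{2}$, and $k\le\frac{(m-2)n}{2}$ holds precisely when $d\ge n$. As in the proof of Proposition 2.3, every perfect matching uses exactly one edge of $\partial(v)$ for each $v\in S'$, and these edges are distinct because $S'$ is stable; hence $\mu(\partial(S'))=k$, and $T=E(G)\setminus\partial(S')$ is a $d$-transversal set with $|T|=(2m-1)n-4k=4d-n$, giving $\tau_d(G)\le 4d-n$.

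The lower bound is the heart of the matter: I must show $|F|\le 4\mu(F)$ for every $F$, equivalently that some perfect matching contains at least $|F|/4$ edges of $F$. I would prove this by exhibiting a point $x$ in the perfect matching polytope of $G$ with $x_e\ge\frac14$ for all edges $e$: assign $x_e=\frac14$ to every edge incident with an interior vertex and $x_e=\frac38$ to the horizontal edges in rows $1$ and $m$. A short check gives $\sum_{e\ni v}x_e=1$ at every vertex (at a boundary vertex the two incident row-edges contribute $\frac34$ and the vertical edge $\frac14$; at an interior vertex the four incident edges contribute $4\cdot\frac14=1$). Since $G$ is bipartite, this polytope is the convex hull of perfect-matching incidence vectors, so $x=\sum_M\lambda_M\mathbf 1_M$, whence $\max_M|M\cap F|\ge\sum_M\lambda_M|M\cap F|=\sum_{e\in F}x_e\ge\frac14|F|$. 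Clearing denominators, the same content is an explicit family of eight perfect matchings covering each edge at least twice, followed by averaging. Applying this to $F=E(G)\setminus T$ for an arbitrary $d$-transversal set $T$ gives $|F|\le 4\mu(F)\le 4k$, so $|T|\ge(2m-1)n-4k=4d-n$, completing the equality.

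The main obstacle is the lower bound, and within it the non-regularity of $G$: the interior vertices have degree $4$ while the two boundary rows have degree $3$, so $E(G)$ admits no decomposition into $4$ perfect matchings and the uniform $\frac14$ weighting fails at the boundary. The real content is choosing the boundary weights ($\frac38$ on the end-row horizontal edges) so that all marginals stay $\ge\frac14$ while every vertex constraint is preserved. The parity hypotheses ($m,n$ even) are exactly what supply the balanced bipartition used for the polytope's integrality and the existence of the perfect matchings, and even $n$ keeps the cyclic vertex constraints around each end consistent.
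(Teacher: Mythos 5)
Your proof is correct, but your lower bound takes a genuinely different route from the paper's. On the upper bound you are close in spirit: the paper applies Proposition 2.1 directly to a stable set consisting of the $n$ alternating degree-3 boundary vertices together with $d-n$ degree-4 interior vertices, giving $\partial(S)$ of size $3n+4(d-n)=4d-n$ as the transversal, whereas you take the complement $E(G)\setminus\partial(S')$ of an interior stable set of size $\frac{mn}{2}-d$ --- the dual device the paper itself uses in Proposition 2.3 and Lemma 3.11; both are valid here. The real divergence is the lower bound. The paper stays elementary: it partitions $E(G)$ into three pairwise disjoint perfect matchings $M_1,M_2,M_3$ and the set $M_4\setminus M_1$, where $M_4$ is a fourth perfect matching with $|M_1\cap M_4|=n$, and counts $|T|\geq |T\cap M_1|+|T\cap M_2|+|T\cap M_3|+|T\cap (M_4\setminus M_1)|\geq 3d+(d-n)$. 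You instead exhibit a fractional perfect matching with every coordinate at least $\frac{1}{4}$ (weight $\frac{3}{8}$ on the boundary horizontals) and invoke integrality of the bipartite perfect matching polytope to conclude $|F|\leq 4\mu(F)$ for every edge set $F$. Your method is more conceptual: it absorbs the degree-3/degree-4 non-regularity in one stroke and yields $\tau_d\geq 4d-n$ uniformly for all $1\leq d\leq\frac{mn}{2}$, a template that transfers to other bipartite products. The paper's method is self-contained and needs no polyhedral machinery; in fact the two are cousins, since $\frac{1}{4}\left(\mathbf{1}_{M_1}+\mathbf{1}_{M_2}+\mathbf{1}_{M_3}+\mathbf{1}_{M_4}\right)$ is itself a fractional perfect matching with value $\frac{1}{2}$ on the $n$ overlap edges and $\frac{1}{4}$ elsewhere, but one whose decomposition into four explicit matchings is visible, so no integrality theorem is required. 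Two cosmetic slips on your side, neither affecting correctness: a balanced bipartition does not by itself guarantee a perfect matching (here existence is immediate from the all-horizontal matching, since $n$ is even, or from nonemptiness of your polytope plus integrality); and your closing remark that clearing denominators yields eight explicit matchings covering each edge twice is true but unproved as stated --- it needs K\"onig's edge-colouring theorem applied to the $8$-regular bipartite multigraph with edge multiplicities $8x_e$. The polytope argument stands on its own, so these are easily repaired.
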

\begin{proof}
We construct a set $S$ consisting of the vertex subset 
$\{x_{11}, x_{13},\dots, x_{1,{n-1}}, x_{m2}, x_{m4},\\ \dots, x_{mn}\}$ 
and $d-n$ vertices of degree $4$ forming a stable set of $P_m\square C_n$. 
This is valid since $d-n\leq \frac{n(m-2)}{2}$. 
By Proposition 2.1, $\partial(S)$ is a $d$-transversal set with $|\partial(S)|=3n+4(d-n)=4d-n$. 
It is minimum since we construct four perfect matchings $M_1$, $M_2$, $M_3$, $M_4$ satisfying 
$$\begin{aligned}
 M_1\cap M_2=M_1\cap M_3=M_2\cap M_3=M_2\cap M_4=M_3\cap M_4=\emptyset
\end{aligned}$$
and $|M_1\cap M_4|=n$, defined as follows: 
$M_1=\{h_{i1}, h_{i3}, \dots, h_{i,n-1}| i=1,2,\dots, m\}$, 
$M_2=\{h_{i2}, h_{i4}, \dots, h_{in}| i=1,2,\dots, m\}$, 
$M_3=\{v_{1j}, v_{3j}, \dots, v_{m-1,j}| j=1,2,\dots, n\}$, 
$M_4=\{v_{2j}, v_{4j}, \dots, v_{m-2,j}| j=1,2,\dots, n\}\cup \{h_{i1}, h_{i3}, \dots, h_{i,n-1}| i=1,m\}$. 
These matchings are indicated by labels 1, 2, 3 and 4 in Fig.\hspace{0.06cm}\ref{D}, respectively. 
Any $d$-transversal set $T$ must satisfy $|T\cap M_i|\geq d$ for $1\leq i\leq 3$ and 
$|T\cap (M_4\backslash M_1)|\geq d-n$. 
Hence, $\tau_{d}(P_m\square C_n)=4d-n$.
\end{proof}

\begin{figure}[!htbp]
  \centering
  \subfigure{
  \begin{minipage}{6cm}
  \centering
  \includegraphics[totalheight=3.5cm]{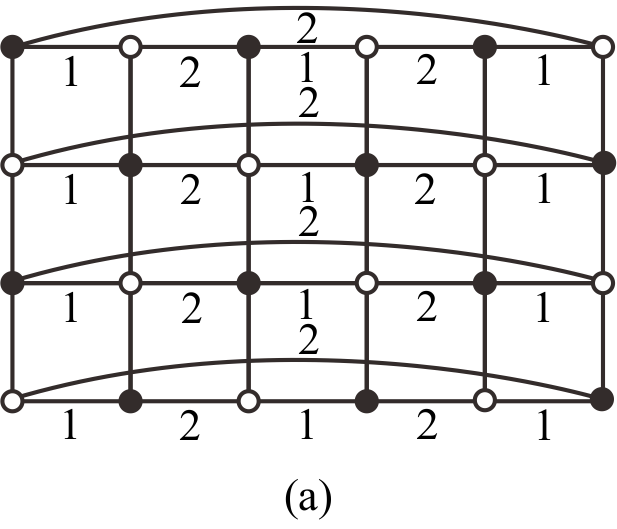}
   \end{minipage}%
   }
  \hspace{0.5cm}
  \subfigure{
  \begin{minipage}{6cm}
  \centering
  \includegraphics[totalheight=3.5cm]{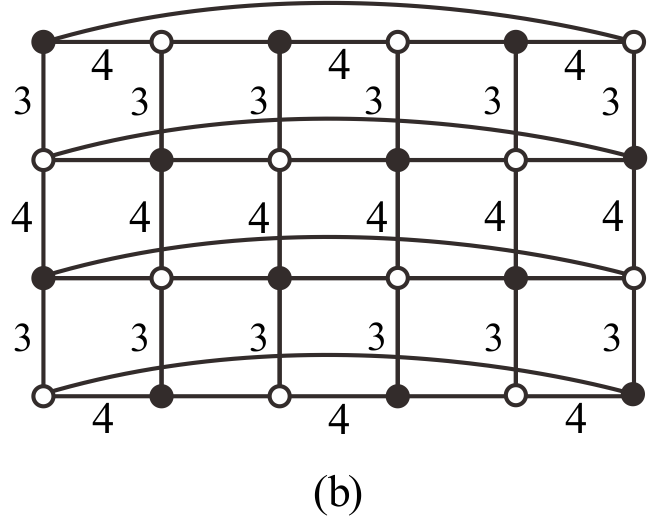}
   \end{minipage}
   }
  \caption{\label{D}\small{(a)Perfect matchings $M_1$ and $M_2$ of $P_4\square C_6$,
   (b)Perfect matchings $M_3$ and $M_4$ of $P_4\square C_6$.}}
\end{figure}

From Lemmas 3.1 and 3.2, we obtain the following result.

\begin{The}
Let $P_m\square C_n$ be a cylindrical grid with $m$ and $n$ even. Then\\
(a) for $m\geq2$ and $n\geq 4$, $\tau_{d}(P_m\square C_n)=3d$ if $1\leq d\leq n$,\\
(b) for $m\geq 4$ and $n\geq 4$, $\tau_{d}(P_m\square C_n)=4d-n$ if $n+1\leq d\leq \frac{mn}{2}$.\\
\end{The}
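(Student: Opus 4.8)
The plan is to assemble Theorem 3.3 directly from the two preceding lemmas: part (a) is exactly the conclusion of Lemma 3.1, and part (b) is exactly the conclusion of Lemma 3.2. No new construction or counting is required, since the substantive content has already been established; the proof reduces to citing the two lemmas and verifying that their parameter ranges fit together consistently.

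First I would apply Lemma 3.1 to obtain $\tau_d(P_m\square C_n)=3d$ for even $m\geq 2$, even $n\geq 4$, and $1\leq d\leq n$, which is precisely part (a). Then I would apply Lemma 3.2 to obtain $\tau_d(P_m\square C_n)=4d-n$ for even $m\geq 4$, even $n\geq 4$, and $n+1\leq d\leq\frac{mn}{2}$, which is precisely part (b). The one thing to check is that these two ranges exhaust the full admissible interval $1\leq d\leq\nu(P_m\square C_n)$. Since $n$ is even, $P_m\square C_n$ is bipartite, and it contains a perfect matching (for instance $M_1$ from the proof of Lemma 3.1), so $\nu(P_m\square C_n)=\frac{mn}{2}$. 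Hence for $m\geq 4$ the intervals $[1,n]$ and $[n+1,\frac{mn}{2}]$ partition $[1,\frac{mn}{2}]$ with neither gap nor overlap, and the piecewise formula is well-defined and exhaustive.

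The one subtlety worth flagging is the boundary case $m=2$, which is covered by part (a) but deliberately excluded from part (b). When $m=2$ we have $\frac{mn}{2}=n$, so the range $n+1\leq d\leq\frac{mn}{2}$ is empty and part (b) contributes nothing, while part (a) already handles the entire interval $1\leq d\leq n=\frac{mn}{2}$. This is exactly why the hypotheses read $m\geq 2$ in (a) but $m\geq 4$ in (b). There is no analytic obstacle to overcome here: the real work, namely building $d$-transversal sets from strongly covered stable sets via Proposition 2.1 for the upper bounds and extracting matching lower bounds from the families of (nearly) disjoint perfect matchings $M_1,\dots,M_4$, is precisely what Lemmas 3.1 and 3.2 accomplish. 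The only care the assembly demands is this range bookkeeping at the seam $d=n$ and at the degenerate value $m=2$.
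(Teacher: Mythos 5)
Your proposal is correct and takes exactly the paper's approach: the paper states Theorem 3.3 with no proof beyond the remark ``From Lemmas 3.1 and 3.2, we obtain the following result,'' which is precisely your assembly of the two lemmas. Your additional bookkeeping at the seam $d=n$ and the degenerate case $m=2$ (where $\frac{mn}{2}=n$ renders part (b) vacuous) is sound and simply makes explicit what the paper leaves implicit.
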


\subsection{$m$ is odd, $n$ is even}

The technique and construction in this case are similar to that of Subsection 3.1.

\begin{Lem}\rm{\cite{ZRB2009}}
For even $n\geq 4$, 
$\tau_{d}(C_n)=2d$ if $1\leq d\leq \frac{n}{2}$.
\end{Lem}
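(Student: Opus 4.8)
The plan is to first observe that for even $n$ the cycle $C_n$ is a connected $2$-regular bipartite graph on $n = 2\cdot\frac{n}{2}$ vertices, so the statement is an immediate special case of Theorem 1.1 with $k=2$. Nonetheless I would give a short self-contained argument paralleling the proof of Lemma 3.1, since it exhibits in the simplest possible setting the two ingredients---a stable-set upper bound via Proposition 2.1, and a disjoint-perfect-matching lower bound---that recur throughout the cylindrical and toroidal analyses.

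For the upper bound, label the vertices $u_1, u_2, \ldots, u_n$ cyclically. Since $n$ is even, every maximum matching of $C_n$ is perfect, hence $S(C_n)=V(C_n)$ and every vertex is strongly covered. The odd-indexed vertices $\{u_1, u_3, \ldots, u_{n-1}\}$ form a stable set of size $\frac{n}{2}$; for $1\leq d\leq \frac{n}{2}$ I would select any $d$ of them, say $S'=\{u_1, u_3, \ldots, u_{2d-1}\}$. By Proposition 2.1, $T=\bigcup_{u\in S'}\partial(u)$ is a $d$-transversal set. Because the chosen vertices are pairwise nonadjacent their edge-boundaries are disjoint, and each vertex has degree $2$, so $|T|=2d$, giving $\tau_d(C_n)\leq 2d$.

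For the lower bound, I would use that $C_n$ has exactly two perfect matchings, $M_1=\{u_1u_2, u_3u_4, \ldots, u_{n-1}u_n\}$ and $M_2=\{u_2u_3, u_4u_5, \ldots, u_nu_1\}$, which are disjoint and together partition $E(C_n)$. Any $d$-transversal set $T$ must meet each $M_i$ in at least $d$ edges, so by disjointness $|T|\geq |T\cap M_1|+|T\cap M_2|\geq 2d$. Combining the two estimates yields $\tau_d(C_n)=2d$.

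There is essentially no obstacle in this case; the only point deserving a moment's care is that $2d-1\leq n-1$ whenever $d\leq \frac{n}{2}$, so the $d$ chosen odd-indexed vertices genuinely form a stable set and do not wrap around to become adjacent. This is precisely where the hypothesis $d\leq\frac{n}{2}$ enters, and it splits the general $k$-regular argument of Theorem 1.1 into the two elementary halves that I expect to reuse, in a more elaborate form, for the grid graphs.
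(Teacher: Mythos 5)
Your proposal is correct and matches the paper's treatment: the paper states this lemma without proof as a citation to \cite{ZRB2009}, and its justification within the paper's framework is exactly your first observation, namely that the even cycle $C_n$ is a $2$-regular bipartite graph so Theorem 1.1 with $k=2$ applies. Your supplementary self-contained argument (stable-set upper bound via Proposition 2.1 plus the two disjoint perfect matchings for the lower bound) is also sound and is the same two-ingredient scheme the paper itself uses in Lemma 3.1, so there is nothing genuinely different to compare.
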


\begin{Lem}
For odd $m\geq 3$ and even $n\geq 4$, 
$\tau_{d}(P_m\square C_n)=3d$ if $1\leq d\leq n$.
\end{Lem}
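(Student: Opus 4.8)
The plan is to mirror the two-sided argument of Lemma 3.1: a stable-set construction (via Proposition 2.1) for the upper bound $\tau_d(P_m\square C_n)\le 3d$, and a family of three pairwise edge-disjoint perfect matchings for the lower bound $\tau_d(P_m\square C_n)\ge 3d$. The genuinely new difficulty, compared with the even-$m$ case, is that the all-vertical matching $M_3=\{v_{1j},v_{3j},\dots\}$ used in Lemma 3.1 no longer exists, because each column now induces a path on the odd number $m$ of vertices and so has no perfect matching.

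For the upper bound I would reuse the stable set $S=\{x_{11},x_{13},\dots,x_{1,n-1},x_{m2},x_{m4},\dots,x_{mn}\}$. Since $m\ge 3$, rows $1$ and $m$ are non-adjacent, so $S$ is stable with $|S|=n$. As $P_m\square C_n$ has the perfect matching $\{h_{i1},h_{i3},\dots,h_{i,n-1}\mid 1\le i\le m\}$ (using that $n$ is even), we have $S(P_m\square C_n)=V(P_m\square C_n)$, so every vertex of $S$ is strongly covered; moreover each is a boundary vertex of degree $3$. For $1\le d\le n$, pick $S'\subseteq S$ with $|S'|=d$ and set $T=\bigcup_{v\in S'}\partial(v)$. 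By Proposition 2.1, $T$ is a $d$-transversal set, and since the vertices of $S'$ are pairwise non-adjacent the sets $\partial(v)$ are pairwise disjoint, giving $|T|=3d$ and hence $\tau_d(P_m\square C_n)\le 3d$.

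For the lower bound I would obtain three pairwise disjoint perfect matchings indirectly, by exhibiting a suitable $3$-regular spanning subgraph and invoking K\"onig's edge-colouring theorem. Let $H$ consist of all vertical edges, together with all horizontal edges of rows $1$ and $m$, and one perfect matching of the horizontal cycle $C_n$ in each interior row $2\le i\le m-1$ (for instance $\{h_{i1},h_{i3},\dots,h_{i,n-1}\}$, which exists as $n$ is even). A degree count shows $H$ is $3$-regular: a boundary vertex $x_{1j}$ or $x_{mj}$ meets its two row edges and its one vertical edge, while an interior vertex $x_{ij}$ meets its two vertical edges and exactly one chosen horizontal edge. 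As a subgraph of the bipartite graph $P_m\square C_n$, $H$ is bipartite, and by K\"onig's theorem a bipartite graph of maximum degree $3$ is $3$-edge-colourable; each colour class is then a $1$-regular spanning subgraph of $H$, i.e.\ a perfect matching $M_i$ of $P_m\square C_n$, and $M_1,M_2,M_3$ are pairwise edge-disjoint. Any $d$-transversal set $T$ satisfies $|T\cap M_i|\ge d$, whence $\tau_d(P_m\square C_n)\ge\sum_{i=1}^{3}|T\cap M_i|\ge 3d$, and the two bounds match.

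The main obstacle is precisely the construction and verification of $H$ as a $3$-regular bipartite spanning subgraph, which is what replaces the now-unavailable vertical matching; once $H$ is in hand, K\"onig's theorem delivers the three disjoint perfect matchings and closes the lower bound. If one prefers an entirely self-contained argument, the three matchings can instead be written down by an explicit staircase pattern (as can be checked directly for $P_3\square C_4$), but the edge-colouring route avoids that bookkeeping.
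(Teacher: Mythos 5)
Your proof is correct, and its overall architecture coincides with the paper's: an upper bound from Proposition 2.1 applied to a stable set of $n$ degree-$3$ vertices in the two boundary rows, and a lower bound from three pairwise disjoint perfect matchings. (The paper's stable set is $\{x_{11},x_{13},\dots,x_{1,n-1},x_{m1},x_{m3},\dots,x_{m,n-1}\}$; your variant with row $m$ shifted to even columns works equally well, since $m\geq 3$ keeps the two boundary rows non-adjacent.) Where you genuinely differ is in how the three matchings are produced. The paper writes them down explicitly, overcoming the loss of the all-vertical matching by mixing edge types: $M_1=\{h_{1j}\mid j\ \text{odd}\}\cup\{v_{ij}\mid i\ \text{even}\}$ matches row $1$ horizontally and pairs the remaining $m-1$ rows vertically, $M_2=\{h_{mj}\mid j\ \text{odd}\}\cup\{v_{ij}\mid i\ \text{odd}\}$ does the symmetric thing anchored at row $m$, and $M_3$ is the even-position horizontal matching. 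You instead exhibit a $3$-regular bipartite spanning subgraph $H$ and invoke K\"onig's edge-colouring theorem to split it into three perfect matchings. Both are valid; your route trades explicit bookkeeping for an appeal to a classical theorem and is more robust, since any $3$-regular spanning subgraph would do. What the paper's explicitness buys becomes visible in the very next lemma (Lemma 3.6): the same $M_1,M_2,M_3$ are reused there alongside a fourth matching $M_4$, and that argument needs precise control of the intersections $|M_1\cap M_4|=|M_2\cap M_4|=\frac{n}{2}$ --- quantitative information that a K\"onig-type existence argument cannot supply.
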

\begin{proof}
Let $S=\{x_{11}, x_{13},\dots, x_{1,n-1}, x_{m1}, x_{m3}, \dots, x_{m,n-1}\}$ 
be a stable set of $P_m\square C_n$. By Proposition 2.1, 
we construct a $d$-transversal set $T$ of $P_m\square C_n$ by taking the edge subset $\partial(S')$, 
where $S'\subseteq S$ and $|S'|=d$. This is valid since $d\leq n$. 
Thus, $\tau_{d}(P_m\square C_n)\leq |T|=3d$. 
Furthermore, $P_m\square C_n$ contains three disjoint perfect matchings $M_1$, $M_2$, $M_3$, defined as follows: 
$M_1=\{h_{1j}| j=1,3,\dots, n-1\}\cup \{v_{2j}, v_{4j}, \dots, v_{m-1,j}| j=1,2,\dots, n\}$, 
$M_2=\{h_{mj}| j=1,3,\dots, n-1\}\cup \{v_{1j}, v_{3j}, \dots, v_{m-2,j}| j=1,2,\dots, n\}$, 
$M_3=\{h_{i2}, h_{i4}, \dots, h_{in}| i=1,2,\dots, m\}$. 
For any $d$-transversal set $T'$, we have
$$\begin{aligned}
 \tau_{d}(P_m\square C_n)\geq \sum\limits_{i=1}\limits^{3}|T'\cap M_i|\geq 3d.
\end{aligned}$$
Therefore, $\tau_{d}(P_m\square C_n)=3d$.
\end{proof}

\begin{Lem}
For odd $m\geq 3$ and even $n\geq 4$, 
$\tau_{d}(P_m\square C_n)=4d-n$ if $n+1\leq d\leq \frac{mn}{2}$.
\end{Lem}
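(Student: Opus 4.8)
The plan is to follow the two-sided strategy of Lemma 3.2: I would produce a stable set realizing the value $4d-n$ for the upper bound, and a suitable family of four perfect matchings for the matching lower bound. Since $n$ is even, $P_m\square C_n$ is bipartite and has a perfect matching, so every maximum matching is perfect and hence every vertex is strongly covered, which will let me apply Proposition 2.1 freely.

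For the upper bound I would fix the color class $\{x_{ij}: i+j \text{ even}\}$. This class contains exactly $n$ boundary vertices of degree $3$, namely the odd-column vertices of rows $1$ and $m$, together with $\frac{(m-2)n}{2}$ interior vertices of degree $4$. Because $n+1\le d\le \frac{mn}{2}$ gives $0\le d-n\le \frac{(m-2)n}{2}$, I can take a stable set $S$ consisting of all $n$ boundary vertices and any $d-n$ interior ones. By Proposition 2.1, $\partial(S)$ is a $d$-transversal set with $|\partial(S)|=3n+4(d-n)=4d-n$, so $\tau_{d}(P_m\square C_n)\le 4d-n$. For the lower bound I would exhibit perfect matchings $M_1,M_2,M_3,M_4$ satisfying $M_1\cap M_2=M_1\cap M_3=M_2\cap M_3=M_2\cap M_4=M_3\cap M_4=\emptyset$ and $|M_1\cap M_4|=n$; the counting argument then gives, for any $d$-transversal set $T$, that $|T|\ge |T\cap M_1|+|T\cap M_2|+|T\cap M_3|+|T\cap(M_4\setminus M_1)|\ge 3d+(d-n)=4d-n$, where the last term uses $|T\cap M_4|\ge d$ and $|M_1\cap M_4|=n$.

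The main obstacle is constructing $M_4$ for odd $m$. In the even case the interior rows $2,\dots,m-1$ form an even number of rows and can be paired off by vertical edges, which is exactly how the even-$m$ construction of $M_4$ proceeds; for odd $m$ this parity fails, so I cannot reuse it. My resolution is to choose $M_2$ and $M_3$ to be the two mixed matchings: $M_2$ covering row $m$ (resp. $M_3$ covering row $1$) by odd-column horizontals, and the remaining rows by the odd- (resp. even-) indexed layers of vertical edges. Then $M_2\cup M_3$ consists of all vertical edges together with the odd-column horizontals of rows $1$ and $m$, so that $H:=(P_m\square C_n)-M_2-M_3$ is the disjoint union of $n$ forced isolated edges (the even-column horizontals of rows $1$ and $m$) and $m-2$ cycles $C_n$, one per interior row.

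Finally I would define $M_1$ to use the odd-column horizontals on every interior cycle and $M_4$ to use the even-column ones, so that $M_4$ is simply the set of all even-column horizontals while $M_1$ agrees with $M_4$ only on the $n$ forced isolated edges. Both $M_1$ and $M_4$ are perfect matchings lying inside $H$, hence automatically disjoint from $M_2$ and $M_3$, and $|M_1\cap M_4|=n$ by construction. After checking the five disjointness relations directly from these explicit descriptions and confirming that $M_2,M_3$ are indeed perfect matchings for odd $m$, combining the two bounds yields $\tau_{d}(P_m\square C_n)=4d-n$.
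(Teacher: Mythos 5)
Your proposal is correct and takes essentially the same route as the paper's proof: the same Proposition 2.1 upper bound via a stable set of the $n$ degree-3 boundary vertices plus $d-n$ degree-4 interior vertices, and the same lower-bound count $|T|\ge 3d+(d-n)$ from four perfect matchings whose overlaps total $n$ edges. The only cosmetic difference is in the fourth matching: the paper uses all odd-column horizontals, which meets each of the two mixed (vertical-plus-horizontal) matchings in $n/2$ edges, whereas you use a hybrid matching so that the entire overlap of $n$ edges sits inside a single pair $M_1\cap M_4$; both instantiations support the identical counting argument.
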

\begin{proof}
Let $S=\{x_{22}, x_{24},\dots, x_{2n}, \dots, x_{{m-1},2}, x_{{m-1},4},\dots, x_{{m-1},n}\}$ 
be a stable set of $P_m\square C_n$. 
We construct a set $T$ by taking the edge set 
$\partial(\{x_{11}, x_{13}, \dots, x_{1,{n-1}}, x_{m1}, x_{m3}, \dots, x_{m,n-1} \})$ 
together with $\partial(S')$, where $S'\subseteq S$ and $|S'|=d-n$. 
These subsets are edge disjoint. This gives $|T|=3n+4(d-n)=4d-n$. 
By Proposition 2.1, $T$ is a $d$-transversal set. 
So $\tau_{d}(P_m\square C_n)\leq 4d-n$. 
Next, we construct four perfect matchings $M_1$, $M_2$, $M_3$, $M_4$ satisfying: 
$M_1\cap M_2=M_1\cap M_3=M_2\cap M_3=M_3\cap M_4=\emptyset$, $|M_1\cap M_4|=|M_2\cap M_4|=\frac{n}{2}$, 
defined as follows: 
$M_1=\{h_{1j}| j=1,3,\dots, n-1\}\cup \{v_{2j}, v_{4j}, \dots, v_{m-1,j}| j=1,2,\dots, n\}$, 
$M_2=\{h_{mj}| j=1,3,\dots, n-1\}\cup \{v_{1j}, v_{3j}, \dots, v_{m-2,j}| j=1,2,\dots, n\}$, 
$M_3=\{h_{i2}, h_{i4}, \dots, h_{in}| i=1,2,\dots, m\}$, 
$M_4=\{h_{i1}, h_{i3}, \dots, h_{i,n-1}| i=1,2,\dots, m\}$. 
Any $d$-transversal set $T'$ must satisfy $|T'\cap M_i|\geq d$ for $1\leq i\leq 3$, 
and $|T'\cap \{M_4\backslash (M_1\cup M_2)\}|\geq d-n$. 
Therefore, $\tau_{d}(P_m\square C_n)\geq 3d+(d-n)=4d-n$.
\end{proof}

By Lemmas 3.4 to 3.6, we have the following result.

\begin{The}
Let $P_m\square C_n$ be a cylindrical grid with $m$ odd and $n$ even. Then\\
(a) for $n\geq 4$, $\tau_{d}(C_n)=2d$ if $1\leq d\leq \frac{n}{2}$,\\
(b) for $m\geq 3$ and $n\geq 4$, $\tau_{d}(P_m\square C_n)=3d$ if $1\leq d\leq n$,\\
(c) for $m\geq 3$ and $n\geq 4$, $\tau_{d}(P_m\square C_n)=4d-n$ if $n+1\leq d\leq \frac{mn}{2}$.\\
\end{The}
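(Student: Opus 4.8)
The plan is to obtain Theorem 3.7 as a direct assembly of the three preceding lemmas, each of which already supplies an exact value of $\tau_d$ on a distinct case-and-range. Part (a) is precisely Lemma 3.4, which records the value $\tau_d(C_n)=2d$ for $1\le d\le \frac{n}{2}$; here $C_n=P_1\square C_n$ handles the degenerate case $m=1$, and since $\nu(C_n)=\frac{n}{2}$ for even $n$, this range is already the entire admissible interval of $d$. Parts (b) and (c), treating odd $m\ge 3$, are verbatim Lemmas 3.5 and 3.6 respectively.

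The only genuine thing to verify is that the stated ranges partition the full interval of admissible transversal orders. For odd $m\ge 3$ and even $n\ge 4$ the graph $P_m\square C_n$ has $mn$ vertices with $mn$ even, and it carries a perfect matching (for instance $M_3$ in Lemma 3.5 covers every vertex), so $\nu(P_m\square C_n)=\frac{mn}{2}$ and $d$ ranges over $\{1,\dots,\frac{mn}{2}\}$. First I would check that $\{1,\dots,n\}$ from part (b) and $\{n+1,\dots,\frac{mn}{2}\}$ from part (c) are disjoint and together exhaust this interval; this holds because $n+1\le\frac{mn}{2}$ whenever $m\ge 3$ and $n\ge 4$, so neither piece is empty and no admissible $d$ is omitted.

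As a consistency check I would confirm that the two formulas agree at the shared boundary: at $d=n$, part (b) gives $3n$, while substituting $d=n$ into the part-(c) formula $4d-n$ also gives $4n-n=3n$. Hence the piecewise definition is continuous across the transition and well-posed, with no value of $d$ left ambiguous. Together with the range check above, this finishes the assembly.

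Since every ingredient is already established in Lemmas 3.4--3.6, there is no real obstacle at the level of Theorem 3.7 itself; the substantive work lies entirely in the lemmas. If I instead had to prove those from scratch, the main difficulty would be the matching lower bounds. The upper bounds follow routinely from Proposition 2.1 by taking $\partial(S')$ for a suitable stable set $S'$ of strongly covered vertices, but the lower bounds demand exhibiting enough nearly disjoint perfect matchings: three pairwise-disjoint matchings $M_1,M_2,M_3$ force $\tau_d\ge 3d$ in the regime $d\le n$, and a fourth matching $M_4$ overlapping each of $M_1$ and $M_2$ in $\frac{n}{2}$ edges upgrades the bound to $4d-n$ once $d>n$, via summing the constraints $|T'\cap M_i|\ge d$ for $1\le i\le 3$ together with $|T'\cap(M_4\setminus(M_1\cup M_2))|\ge d-n$.
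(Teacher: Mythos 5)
Your proposal is correct and matches the paper exactly: Theorem 3.7 is stated there with the one-line justification ``By Lemmas 3.4 to 3.6,'' i.e.\ a direct assembly of the three lemmas, just as you do. Your additional checks (that the ranges $1\le d\le n$ and $n+1\le d\le \frac{mn}{2}$ exhaust all admissible $d$, and that the formulas $3d$ and $4d-n$ agree at $d=n$) are sound and harmless refinements of the same argument.
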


\subsection{$m$ is even, $n$ is odd}

In this case, 
the graph $P_m\square C_n$ is non-bipartite and $\alpha(P_m\square C_n)=\frac{m(n-1)}{2}$ \cite{HK1996}.

\begin{Lem}
For even $m\geq 2$ and odd $n\geq 3$, 
$\tau_{d}(P_m\square C_n)=3d$ if $1\leq d\leq n-1$ and $\tau_{\frac{mn}{2}}(P_m\square C_n)=2mn-n$.
\end{Lem}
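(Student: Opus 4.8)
The plan is to treat the two ranges separately, reusing the upper/lower bound scheme of Lemmas 3.1--3.6. For $1\le d\le n-1$, I would first obtain the upper bound $\tau_d(P_m\square C_n)\le 3d$ from Proposition 2.1: since $m$ is even, $P_m\square C_n$ has a perfect matching, so $S(P_m\square C_n)=V(P_m\square C_n)$. The two end rows $1$ and $m$ are copies of $C_n$ with $n$ odd, each admitting an independent set of size $\frac{n-1}{2}$; as rows $1$ and $m$ are non-adjacent (this handles $m\ge 4$; for $m=2$ every vertex already has degree $3$ and $\alpha(P_2\square C_n)=n-1$), their union is a stable set of $n-1$ vertices, all of degree $3$. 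Choosing any $S'$ of size $d\le n-1$ inside it gives $|\partial(S')|=3d$, hence $\tau_d\le 3d$.

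For the lower bound $\tau_d\ge 3d$ I would exhibit three pairwise edge-disjoint perfect matchings. The clean way is to keep all horizontal edges together with the odd vertical matching $\{v_{1j},v_{3j},\dots,v_{m-1,j}\}$; this cubic spanning subgraph is the vertex-disjoint union of $\frac{m}{2}$ prisms $P_2\square C_n$ on the row-pairs $(2k-1,2k)$. Each such prism is a Hamiltonian cubic graph whose Hamiltonian cycle has even length $2n$, hence $1$-factorable into three perfect matchings (two from alternately colouring the even cycle, one from the remaining $n$ rungs). Taking the union over the $\frac{m}{2}$ prisms yields three disjoint perfect matchings $M_1,M_2,M_3$ of $P_m\square C_n$, and any $d$-transversal $T'$ satisfies $|T'|\ge\sum_{i=1}^{3}|T'\cap M_i|\ge 3d$.

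For $d=\frac{mn}{2}=\nu(P_m\square C_n)$, I would first make the structural reduction: a $\nu$-transversal $T$ must satisfy $|T\cap M|\ge|M|$ for every perfect matching $M$, which forces $M\subseteq T$; hence the minimum $\nu$-transversal is exactly the union of all perfect matchings, and $\tau_{mn/2}$ equals the number of edges lying in some perfect matching. Since $|E(P_m\square C_n)|=mn+(m-1)n=2mn-n$, the claimed value is attained precisely when the graph is matching covered, i.e. when every edge lies in a perfect matching; so the whole statement reduces to proving this.

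To prove matching coverage I would construct, for each edge, an explicit perfect matching. Every odd vertical edge lies in $M_3=\{v_{1j},v_{3j},\dots,v_{m-1,j}\mid 1\le j\le n\}$. For a horizontal edge $h_{ij}$, I would match the two-column strip on columns $j,j+1$ by all rungs $\{h_{1j},\dots,h_{mj}\}$ and match every other column internally by its odd vertical edges. The remaining case, which I expect to be the only real obstacle, is an even vertical edge $v_{ij}$ (with $i$ even, $2\le i\le m-2$): here I would again isolate the strip on columns $j,j+1$ and match it inside the ladder $P_m\square P_2$ by $\{v_{ij},v_{i,j+1}\}\cup\{h_{kj}\mid 1\le k\le i-1 \text{ or } i+2\le k\le m\}$, matching all other columns by odd vertical edges; a direct count shows this covers all $2m$ strip-vertices. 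Since every edge is thus witnessed, the union of all perfect matchings is $E(P_m\square C_n)$, giving $\tau_{mn/2}(P_m\square C_n)=2mn-n$. The delicate point throughout is that the odd cycles $C_n$ forbid purely horizontal row-matchings, so each perfect matching must route an odd ``defect'' through vertical edges; the strip/ladder device is exactly what makes this routing compatible with fixing a prescribed even vertical edge.
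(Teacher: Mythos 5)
Your proposal is correct and follows essentially the same route as the paper: the same degree-3 stable set split between the two boundary rows gives the upper bound $3d$, and your prism 1-factorization produces exactly the paper's three disjoint perfect matchings (all horizontal edges plus the odd-row vertical edges, decomposed into three 1-factors), giving the matching lower bound. For $d=\frac{mn}{2}$ the paper merely asserts that $F_0(P_m\square C_n)=\emptyset$ (every edge lies in some perfect matching) and concludes that a $\frac{mn}{2}$-transversal must contain all $2mn-n$ edges, whereas you prove this matching-covered property explicitly via the two-column strip constructions, which supplies a detail the paper leaves unjustified.
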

\begin{proof}
Let $S=\{x_{11}, x_{13},\dots, x_{1,n-2}, x_{m2}, x_{m4}, \dots, x_{m,n-1}\}$ 
be a stable set of $P_m\square C_n$. 
By Proposition 2.1, we construct a $d$-transversal set $T$ of $P_m\square C_n$ 
by taking the edge subset $\partial(S')$, where $S'\subseteq S$ and $|S'|=d$. 
This is valid for $d\leq |S|=n-1$. 
Thus, $\tau_{d}(P_m\square C_n)\leq |T|=3d$. 
Since $P_m\square C_n$ has three disjoint perfect matchings $M_1$, $M_2$, $M_3$, defined as:
$M_1=\{h_{i1}, h_{i3}, \dots, h_{i,n-2}| i=1,2,\dots, m\}\cup \{v_{1n}, v_{3n},\dots, v_{m-1,n}\}$, 
$M_2=\{h_{i2}, h_{i4}, \dots, h_{i,n-1}| i=1,2,\dots, m\}\cup \{v_{11}, v_{31},\dots, v_{m-1,1}\}$, 
$M_3=\{h_{in}| i=1,2,\dots, m\}\cup \{v_{1j}, v_{3j}, \dots, v_{m-1,j}| j=2,3,\dots, n-1\}$. 
Any $d$-transversal set $T'$ must satisfy $|T'\cap M_i|\geq d$ for $1\leq i\leq 3$. Therefore,
$$\begin{aligned}
 \tau_{d}(P_m\square C_n)\geq \sum\limits_{i=1}\limits^{3}|T'\cap M_i|\geq 3d.
\end{aligned}$$ 
Thus, $T$ is a minimum $d$-transversal set for $d\leq n-1$.

When $d=\frac{mn}{2}$, 
since $\nu(P_m\square C_n)=\frac{mn}{2}$ and $F_0(P_m\square C_n)=\emptyset$, 
a $\frac{mn}{2}$-transversal set of $P_m\square C_n$ must contain all $2mn-n$ edges. 
Hence, $\tau_{\frac{mn}{2}}(P_m\square C_n)=2mn-n$.
\end{proof}

In the following, 
we give upper and lower bounds for $\tau_{d}(P_m\square C_n)$ when $n\leq d< \frac{mn}{2}$.

\begin{Lem}
For even $m\geq 4$ and odd $n\geq 3$, 
$\tau_{d}(P_m\square C_n)\geq 4d-n-\lfloor \frac{d-n}{n-2}\rfloor$ if $n \leq d< \frac{mn}{2}$.
\end{Lem}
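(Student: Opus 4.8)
The plan is to reuse the counting scheme behind the lower bounds of Lemmas 3.2 and 3.6: select a list of perfect matchings, apply the defining inequality $|T\cap M|\ge d$ to each, and sum the contributions over pairwise edge-disjoint pieces. Concretely, for perfect matchings $M^{(1)},\dots,M^{(p)}$ with partial unions $U_\ell=M^{(1)}\cup\cdots\cup M^{(\ell-1)}$, any $d$-transversal set $T$ satisfies $|T\cap(M^{(\ell)}\setminus U_\ell)|\ge d-|M^{(\ell)}\cap U_\ell|$, and since the pieces $M^{(\ell)}\setminus U_\ell$ are pairwise disjoint this yields $|T|\ge\sum_{\ell}\bigl(d-|M^{(\ell)}\cap U_\ell|\bigr)$. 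Everything then reduces to choosing matchings whose cumulative overlaps are small enough to leave the claimed value; note that the target splits as $4d-n-\lfloor\frac{d-n}{n-2}\rfloor=3d+\lceil\frac{(n-3)(d-n)}{n-2}\rceil$, which tells me to aim for a clean $3d$ plus a forced surplus of $\lceil\frac{(n-3)(d-n)}{n-2}\rceil$ edges.

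First I would take the three pairwise disjoint perfect matchings $M_1,M_2,M_3$ of Lemma 3.7, which give the base $3d$ with zero overlap. A direct check shows that together they use every horizontal edge and every vertical edge lying in an odd row, so the edges left unused — call them \emph{free} — are exactly the even-row verticals $v_{ij}$ with $i\in\{2,4,\dots,m-2\}$ and $1\le j\le n$, numbering $\frac{(m-2)n}{2}$ (which is why the hypothesis $d<\frac{mn}{2}$, equivalently a positive supply of free edges, is in force). To force transversal edges among the free edges I would introduce \emph{shifted} perfect matchings: over a block of columns, replace the odd-row verticals of $M_3$ by even-row verticals and cover the exposed vertices of rows $1$ and $m$ by horizontal dominoes $h_{1j},h_{mj}$. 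Since $n$ is odd, the $n$ columns of each row-cycle $C_n$ cannot be completely tiled by dominoes, so every such matching must retain at least one full column of odd-row verticals; this unavoidable parity defect is the combinatorial origin of the correction term, and the regime $d\ge n$ is exactly the threshold past which these matchings begin to force free edges into $T$.

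The decisive step is to run the disjoint-piece inequality over a well-chosen family of shifted matchings so that the forced free edges accumulate at the rate $\frac{n-3}{n-2}$ rather than $1$, which after rounding produces the ceiling $\lceil\frac{(n-3)(d-n)}{n-2}\rceil$ and hence the floor correction. I expect this to be the main obstacle. A naive rotation family (one matching per choice of defect column) is easy to write down but charges each matching a defect overlap of order $m$, so it only yields an $m$-dependent estimate; the content of the lemma is that a more delicate family together with a double counting over the $n-2$ interior columns washes out the $m$-dependence and pins the loss at exactly one unit per $n-2$ columns. Establishing that the parity defect cannot be amortized below this rate — and dually that the free-edge supply $\frac{(m-2)n}{2}$ is never the binding constraint in the stated range — is where the real work lies; once that counting is in place, adding the surplus to the base $3d$ gives $4d-n-\lfloor\frac{d-n}{n-2}\rfloor$.
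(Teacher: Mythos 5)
You have assembled exactly the right objects: the three pairwise disjoint perfect matchings covering all horizontal edges and all odd-row verticals (these are the matchings of Lemma 3.8, not Lemma 3.7), the leftover set $M_4$ of even-row verticals, the column-shifted perfect matchings $M_4^j$ ($2\le j\le n-1$) obtained by swapping one column back to odd-row verticals and re-tiling rows $1$ and $m$ with horizontals, and even the correct arithmetic target $4d-n-\lfloor\frac{d-n}{n-2}\rfloor=3d+\lceil\frac{(n-3)(d-n)}{n-2}\rceil$. But your proposal stops precisely at the step that constitutes the proof: you announce that the accumulation rate $\frac{n-3}{n-2}$ must be pinned down by ``a double counting over the $n-2$ interior columns,'' and you do not carry it out. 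Worse, the only counting tool you actually formulate, the disjoint-piece inequality $|T|\ge\sum_\ell\bigl(d-|M^{(\ell)}\cap U_\ell|\bigr)$, cannot produce the bound with this family: since every horizontal edge and every odd-row vertical lies in $M_1\cup M_2\cup M_3$, one has $M_4^j\setminus(M_1\cup M_2\cup M_3)=M_4\setminus(\text{column }j)$, so the first shifted matching contributes $d-\frac{m}{2}-(n-1)$ (your $m$-dependent estimate), the second has as its new piece a single column of $M_4$ and contributes only $d-\frac{mn}{2}+\frac{m-2}{2}$, and every further one has an empty piece and contributes nothing. For large $m$ and mid-range $d$ this falls short of the claim by roughly $\frac{m}{2}$ --- exactly the defect you diagnosed but did not repair.

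The missing idea, which is the paper's actual mechanism, is to use each constraint $|T\cap M_4^j|\ge d$ not as an additive contribution to $|T|$ but as a lower bound on the number of $T$-edges inside column $j$ of $M_3$, and then to play the $n-2$ resulting bounds off against the single quantity $|T\cap M_3|$. Write $|T\cap M_i|=d+x_i$ with $x_i\ge 0$ for $i=1,2,3$; since $M_1,M_2,M_3,M_4$ partition $E(P_m\square C_n)$, one has $|T\cap M_4|=|T|-3d-x_1-x_2-x_3$, and it is this substitution that eliminates the dependence on $m$. Decomposing $M_4^j$ as $(M_4\setminus\text{column }j)\cup(\text{odd verticals of column }j)\cup(\text{$n-1$ horizontals})$, the constraint $|T\cap M_4^j|\ge d$ forces at least $4d-|T|+x_1+x_2+x_3-n+1$ edges of $T$ among the odd verticals of column $j$ (for $j=2$ one gains $2$ more because $h_{1n},h_{mn}\in M_3$ are among its horizontals). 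These $n-2$ edge sets are disjoint subsets of $M_3$, so summing and comparing with $|T\cap M_3|=d+x_3$ gives $d+x_3\ge (n-2)\bigl(4d-|T|+x_1+x_2+x_3-n+1\bigr)+2$, hence $|T|\ge 4d-n-\frac{d-n}{n-2}+x_1+x_2+\frac{n-3}{n-2}x_3\ge 4d-n-\frac{d-n}{n-2}$, and integrality of $|T|$ yields the floor. Your parity observation (every shifted matching must keep one full column of odd verticals) is true but is not this quantitative statement; without the column-wise bookkeeping above, what you have is a correct setup and a correct guess of the answer, not a proof.
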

\begin{proof}
We partition the edge set of $P_m\square C_n$ into four matchings 
$M_1$, $M_2$, $M_3$ (as in Lemma 3.8) and 
$M_4=\{v_{2j}, v_{4j}, \dots, v_{m-2,j}| j=1,2,\dots, n\}$. 
Here, $|M_i|=\frac{mn}{2}$ for $1\leq i\leq 3$, and $|M_4|=\frac{mn}{2}-n$. 
Examples of these matchings are indicated by 1, 2, 3 and 4 in Fig.\hspace{0.06cm}\ref{E}, respectively. 
Let $T$ be a $d$-transversal set. Then $|T\cap M_i|\geq d$ for $1\leq i\leq 3$. 
Define $x_1, x_2, x_3\geq 0$ such that $|T\cap M_i|=d+x_i$. Thus,
$$\begin{aligned}
 |T\cap M_4|=|T|-3d-x_1-x_2-x_3.
\end{aligned}$$
For $j\in\{2,3,\dots,n-1\}$, 
construct a perfect matching $M^j_4$ (see Fig.\hspace{0.06cm}\ref{F}): 
$M^j_4=M_4\cup \{h_{1k}, h_{mk}| k=\dots,j-4, j-2, j+1, j+3,\dots \text{(mod n)}\}\cup 
\{v_{ij}| i=1,3,\dots, m-1\}\backslash\{v_{ij}| i=2,4,\dots, m-2\}$.
Observe that $T$ contains at least
$$\begin{aligned}
 |T\cap M^2_4|-|T\cap M_4|-(n-3)
  &\geq d-|T\cap M_4|-(n-3)\\
  &=4d-|T|+x_1+x_2+x_3-n+3
 \end{aligned}$$
edges of $M_3$ in column $2$ and horizontal edge set $\{h_{1n}$, $h_{mn}\}$. 
$T$ contains at least
$$\begin{aligned}
 |T\cap M^j_4|-|T\cap M_4|-(n-1)&\geq d-|T\cap M_4|-(n-1)\\&=4d-|T|+x_1+x_2+x_3-n+1
 \end{aligned}$$
edges of $M_3$ in column $j$ for $j=3,4,\dots, n-1$.\\
Combining all columns, we derive:
$$\begin{aligned}
 |T\cap M_3|&=d+x_3\\
 &\geq (4d-|T|+x_1+x_2+x_3-n+3)+(n-3)(4d-|T|+x_1+x_2+x_3-n+1)\\
 &=(n-2)(4d-|T|+x_1+x_2+x_3-n+1)+2.
 \end{aligned}$$
Then we have
$$\begin{aligned}
   \frac{d+x_3-2}{n-2}\geq 4d-|T|+x_1+x_2+x_3-n+1,
 \end{aligned}$$
hence,
$$\begin{aligned}
 |T|\geq 4d-n-\frac{d-n}{n-2}+x_1+x_2+\frac{n-3}{n-2}x_3.
 \end{aligned}$$
Since $x_1, x_2\geq 0$ and $\frac{n-3}{n-2}x_3\geq 0$.
$$\begin{aligned}
 |T|\geq 4d-n-\lfloor \frac{d-n}{n-2}\rfloor.
 \end{aligned} $$
The proof is complete.
\end{proof}

\begin{figure}[!htbp]
  \centering
  \subfigure{
  \begin{minipage}{6cm}
  \centering
  \includegraphics[totalheight=3.8cm]{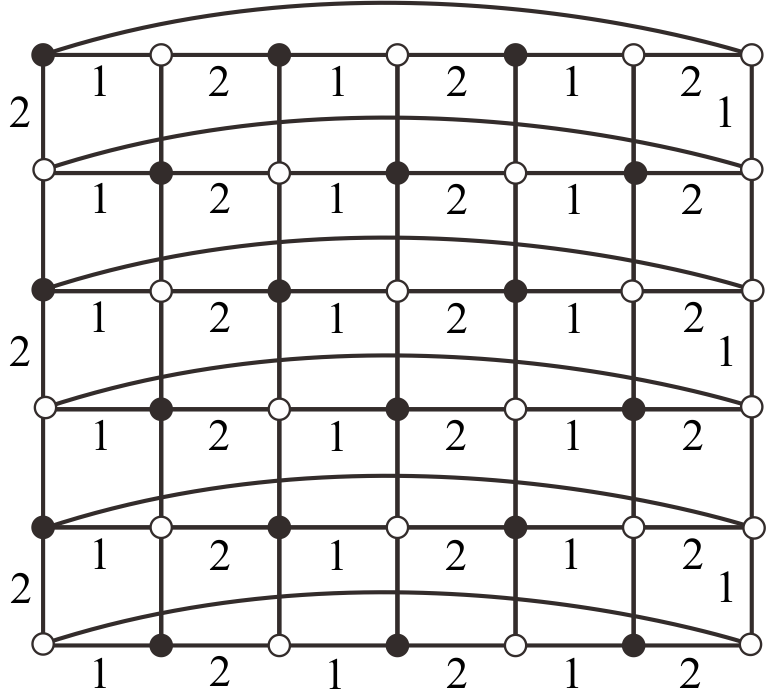}
   \end{minipage}%
   }%
  \hspace{0.5cm}
  \subfigure{
  \begin{minipage}{6cm}
  \centering
  \includegraphics[totalheight=3.5cm]{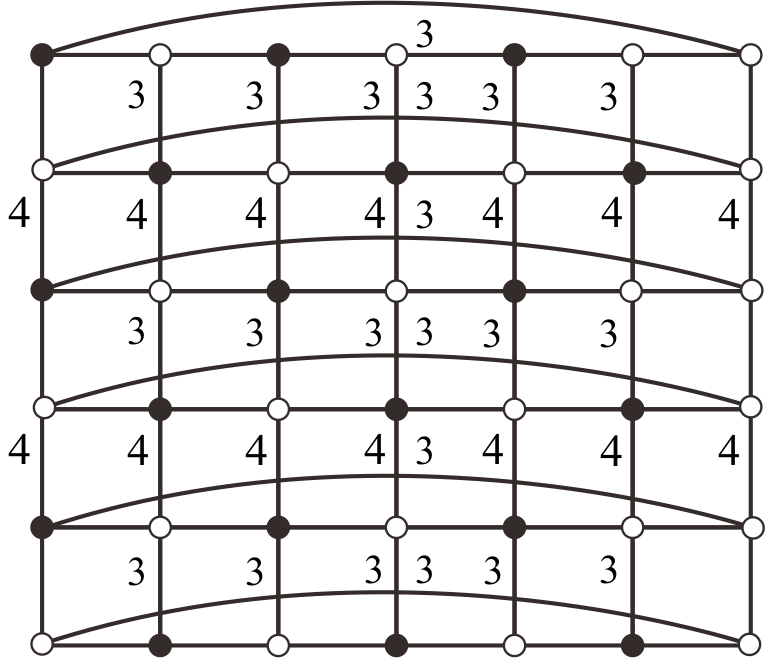}
   \end{minipage}
   }
  \caption{\label{E}\small{4-partition of $P_6\square C_7$.}}
\end{figure}

\begin{figure}[!htbp]
  \centering
  \subfigure{
  \begin{minipage}{6cm}
  \centering
  \includegraphics[totalheight=3.5cm]{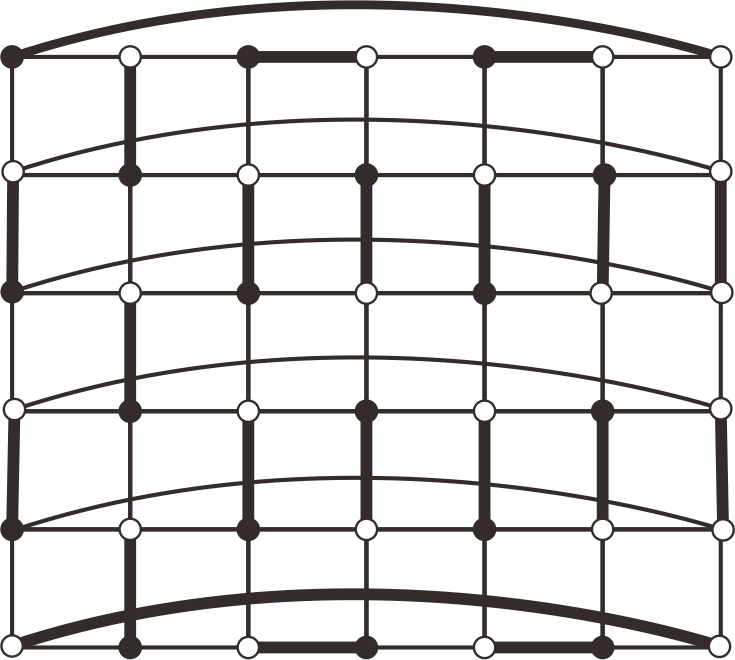}
   \end{minipage}%
   }%
  \hspace{0.5cm}
  \subfigure{
  \begin{minipage}{6cm}
  \centering
  \includegraphics[totalheight=3.5cm]{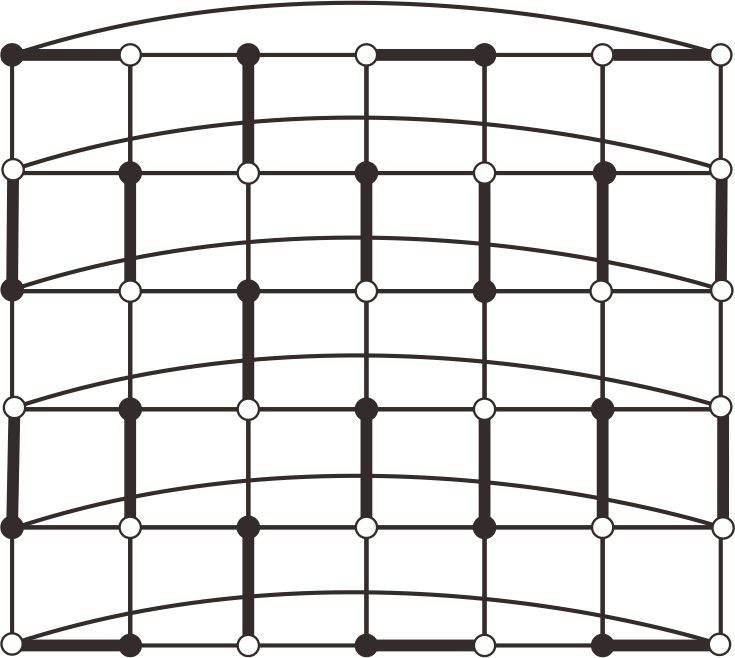}
   \end{minipage}
   }
  \caption{\label{F}\small{Perfect matchings $M^2_4$ (left) and $M^3_4$ (right) of $P_6\square C_7$.}}
\end{figure}


\begin{Lem}
For even $m\geq 4$ and odd $n\geq 3$, 
$\tau_{d}(P_m\square C_n)\geq 4d-n-\lfloor\frac{mn-2d}{2n-2}\rfloor$ if $d< \frac{mn}{2}$.
\end{Lem}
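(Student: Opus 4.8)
The plan is to recycle the four-matching partition $E(P_m\square C_n)=M_1\sqcup M_2\sqcup M_3\sqcup M_4$ and the perfect matchings $M_4^j$ introduced for Lemma 3.9, but now to deploy \emph{all} $n$ of them at once and to argue on the complement $\overline{T}:=E(P_m\square C_n)\setminus T$. Recall $M_1,M_2,M_3$ are perfect matchings and $M_4=\{v_{2j},v_{4j},\dots,v_{m-2,j}\mid 1\le j\le n\}$, so $|M_4|=\frac{mn}{2}-n$. For every column $j\in\{1,\dots,n\}$ the set $M_4^j$ is a perfect matching (here one uses that $n$ is odd, so that each of rows $1$ and $m$ with column $j$ deleted is a path on $n-1$ vertices admitting a perfect matching), and it splits as a disjoint union $M_4^j=A_j\cup B_j\cup C_j$ with $A_j=M_4\setminus\{v_{2j},\dots,v_{m-2,j}\}$, $B_j=\{v_{1j},v_{3j},\dots,v_{m-1,j}\}$, and $C_j$ the set of horizontal edges of rows $1$ and $m$ used by $M_4^j$. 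Since $T$ is a $d$-transversal set, $|T\cap M_4^j|\ge d$, hence $|\overline{T}\cap M_4^j|\le \frac{mn}{2}-d=:k$ for each $j$, and summing gives $\sum_{j=1}^n|\overline{T}\cap M_4^j|\le nk$.

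The idea is to evaluate this sum exactly. Because the column-$j$ even verticals over all $j$ partition $M_4$, each edge of $M_4$ lies in $A_{j}$ for exactly $n-1$ columns, whence $\sum_j|\overline{T}\cap A_j|=(n-1)\,|\overline{T}\cap M_4|$; and the $B_j$ are pairwise disjoint, so $\sum_j|\overline{T}\cap B_j|\ge 0$. The main obstacle is the horizontal contribution $\sum_j|\overline{T}\cap C_j|$, since the sets $C_j$ overlap across columns and so must be controlled by a counting argument. The key structural fact I would prove is that $C_j$ is the \emph{unique} perfect matching of the $(n-1)$-vertex path obtained from a row-cycle $C_n$ by deleting the column-$j$ vertex, so that the whole configuration is invariant under cyclic rotation of columns; by a symmetry/double-count this forces each of the $2n$ horizontal edges of rows $1$ and $m$ to appear in exactly $\frac{n-1}{2}$ of the matchings $M_4^1,\dots,M_4^n$ (using $n$ odd, and $|C_j|=n-1$). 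I expect verifying this uniform-coverage identity to be the part demanding the most care; granting it, $\sum_j|\overline{T}\cap C_j|=\frac{n-1}{2}\,|\overline{T}\cap H|\ge 0$, where $H$ is the set of $2n$ horizontal edges in rows $1$ and $m$.

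Combining the three pieces, $\sum_j|\overline{T}\cap M_4^j|=(n-1)|\overline{T}\cap M_4|+|\overline{T}\cap V_{\mathrm{odd}}|+\frac{n-1}{2}|\overline{T}\cap H|\le nk$, where $V_{\mathrm{odd}}=\bigcup_j B_j$. Dropping the two nonnegative terms gives $(n-1)|\overline{T}\cap M_4|\le nk$, i.e. $|\overline{T}\cap M_4|\le k+\frac{k}{n-1}$, so that $|T\cap M_4|=|M_4|-|\overline{T}\cap M_4|\ge\bigl(\tfrac{mn}{2}-n\bigr)-k-\tfrac{k}{n-1}=d-n-\tfrac{k}{n-1}$, using $\frac{mn}{2}-k=d$. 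Since $M_1,M_2,M_3,M_4$ partition the edge set and $|T\cap M_i|\ge d$ for $i=1,2,3$,
$$\begin{aligned}
|T|\ \ge\ 3d+|T\cap M_4|\ \ge\ 4d-n-\frac{k}{n-1}\ =\ 4d-n-\frac{mn-2d}{2n-2}.
\end{aligned}$$
Finally, as $|T|$ and $4d-n$ are integers, the bound sharpens to $|T|\ge 4d-n-\big\lfloor\frac{mn-2d}{2n-2}\big\rfloor$, which is the claimed inequality.
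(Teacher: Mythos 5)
Your proof is correct and is essentially the paper's own argument: the paper likewise partitions $E(P_m\square C_n)$ into $M_1,M_2,M_3,M_4$, invokes the $n$ column-swap perfect matchings $M_4^j$, and double-counts the edges of $M_4$ (each lying in the $M_4$-part of exactly $n-1$ of the $M_4^j$), merely phrased contrapositively as a pigeonhole contradiction on the complement $E=E(P_m\square C_n)\setminus T$ rather than as your direct summation followed by rounding. One small remark: the uniform-coverage identity you single out as the delicate step (each horizontal edge of rows $1$ and $m$ lying in exactly $\frac{n-1}{2}$ of the $C_j$) is true but never needed, since your final estimate only uses $\sum_j|\overline{T}\cap C_j|\geq 0$, which is trivial.
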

\begin{proof}
Let $k=\lfloor\frac{mn-2d}{2n-2}\rfloor$. 
We show that no $d$-transversal set $T$ can have at most $4d-n-k-1$ edges. 
Equivalently, for any $E\subseteq E(P_m\square C_n)$ with
$$\begin{aligned}
 |E|\geq 2mn-n-(4d-n-k-1)=4(\frac{mn}{2}-d)+k+1,
\end{aligned}$$ 
there exits a perfect matching $M$ of $P_m\square C_n$ such that $|E\cap M|\geq \frac{mn}{2}-d+1$. 
If $|E\cap M_i|\geq \frac{mn}{2}-d+1$ for $1\leq i\leq 3$, 
the result follows since $M_1$, $M_2$ and $M_3$ are perfect matchings. 
So, suppose $M_4$ contains at least $\frac{mn}{2}-d+k+1$ edges of $E$. 
The edges of $M_4$ lie in columns $j (1\leq j\leq n)$. 
We claim that there exists $j'$ such that the union of all columns $j\neq j'$ 
contains at least $\frac{mn}{2}-d+1$ edges of $E\cap M_4$.

Let $e_j$ denote the number of edges of $E\cap M_4$ in column $j$. 
Assume, for contradiction, that for every $j'$,
$$\begin{aligned}
\sum\limits_{j\neq j'}e_j\leq \frac{mn}{2}-d.
\end{aligned}$$
Summing over all $j'$, we get
$$\begin{aligned}
\sum\limits_{ j'=1}\limits^{n}\sum\limits_{j\neq j'}e_j\leq \sum\limits_{ j'=1}\limits^{n}(\frac{mn}{2}-d),
\end{aligned}$$
implying
$$\begin{aligned}
\sum\limits_{j=1}\limits^{n}e_j&\leq \frac{n}{n-1}(\frac{mn}{2}-d)<\frac{mn}{2}-d+k+1,
\end{aligned}$$
where the last inequality follows from $k>\frac{mn-2d}{2n-2}-1$. 
This is a contradiction. Hence, 
there is a perfect matching $M^j_4$ containing at least $\frac{mn}{2}-d+1$ edges of $E\cap M_4$. 
From the argument, we conclude that $\tau_{d}(P_m\square C_n)\geq 4d-n-\lfloor\frac{mn-2d}{2n-2}\rfloor$.
\end{proof}

\begin{Lem}
For even $m\geq 4$ and odd $n\geq 3$, 
$\tau_{d}(P_m\square C_n)\leq 4d-n+1$ if $n\leq d\leq n+\frac{m}{2}-2$, 
and $\tau_{d}(P_m\square C_n)\leq 4d-n$ if $n+\frac{m}{2}-1\leq d< \frac{mn}{2}$.
\end{Lem}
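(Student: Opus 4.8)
The plan is to establish both inequalities by exhibiting explicit $d$-transversal sets of the stated sizes; as these are upper bounds, no lower-bound argument is needed. Throughout I use that $mn$ is even, so $P_m\square C_n$ has a perfect matching, every maximum matching is perfect, and $S(P_m\square C_n)=V(P_m\square C_n)$. In particular every vertex is strongly covered, so Proposition 2.1 applies to any stable set, and a complement construction in the spirit of the second half of Proposition 2.3 is available.

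For $n\le d\le n+\frac m2-2$ I would apply Proposition 2.1 directly. Begin with the boundary stable set $S_0=\{x_{11},x_{13},\dots,x_{1,n-2},x_{m2},x_{m4},\dots,x_{m,n-1}\}$ of Lemma 3.8, which has $n-1$ vertices, each of degree $3$. Since $n$ is odd, column $n$ meets neither the odd columns used in row $1$ nor the even columns used in row $m$, so $S_0$ contains no vertex of column $n$. Hence I can extend $S_0$ to a stable set $S$ of size $d$ by adjoining the $d-n+1$ degree-$4$ vertices $x_{2n},x_{4n},\dots$ of column $n$; this is possible precisely because one column offers $\frac m2-1$ interior positions and $d-n+1\le\frac m2-1$, which is exactly what fixes the upper end of the range. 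By Proposition 2.1, $T=\partial(S)$ is a $d$-transversal set, and because $S$ is stable its vertex-boundaries are pairwise disjoint, so $|T|=3(n-1)+4(d-n+1)=4d-n+1$.

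For $n+\frac m2-1\le d<\frac{mn}2$ the single boundary vertex lost to the odd cycle must be recovered by a complementary construction. Set $s=\frac{mn}2-d$ and choose a stable set $S_2$ of $s$ interior (degree-$4$) vertices; such a set exists because $s\le\frac{(m-2)(n-1)}2=\alpha(P_{m-2}\square C_n)$, an inequality equivalent to $d\ge n+\frac m2-1$, so this is exactly where the second range begins. Define $T=E(P_m\square C_n)\setminus\partial(S_2)$, noting $|\partial(S_2)|=4s$ and $|E(P_m\square C_n)|=2mn-n$, whence $|T|=4d-n$. The key point is that every perfect matching $M$ satisfies $|M\cap\partial(S_2)|=s$ exactly: each edge of $\partial(S_2)$ has exactly one endpoint in $S_2$ since $S_2$ is stable, and $M$ covers each of the $s$ vertices of $S_2$ by a unique edge, so these $s$ edges are all of $M\cap\partial(S_2)$. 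Therefore $|M\cap T|=\frac{mn}2-s=d$ for every maximum matching $M$, and $T$ is a $d$-transversal set of size $4d-n$.

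The routine part is the bookkeeping: the edge counts and the elementary arithmetic translating the range endpoints into the feasibility conditions $d-n+1\le\frac m2-1$ and $s\le\frac{(m-2)(n-1)}2$. The one genuinely load-bearing step is the exact equality $|M\cap\partial(S_2)|=s$ in the complementary construction, where the stability of $S_2$ together with the fact that all maximum matchings are perfect is what forces every perfect matching to retain precisely $d$ edges of $T$, rather than merely at least $d$.
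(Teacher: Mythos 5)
Your proof is correct and takes essentially the same approach as the paper: for $n\le d\le n+\frac m2-2$ you apply Proposition 2.1 to the $(n-1)$-vertex degree-3 boundary stable set extended by $d-n+1$ interior degree-4 vertices, and for $n+\frac m2-1\le d<\frac{mn}2$ you take the complement of $\partial(S_2)$ for a stable set $S_2$ of $\frac{mn}{2}-d$ interior vertices, exactly as in the paper. The only cosmetic differences are your choice of column-$n$ vertices for the extension (the paper spreads the extra vertices over the interior rows) and your direct counting argument $|M\cap\partial(S_2)|=s$ in place of the paper's bound $\nu(G)\le \frac{mn}{2}-d$, which amount to the same thing.
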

\begin{proof}
Let $S=\{x_{22}, x_{24},\dots, x_{2,n-1}, \dots, x_{m-1,1}, x_{m-1,3}, \dots, x_{m-1,n-2}\}$ 
be a stable set of $P_m\square C_n$. 
we construct a set $T$ of $P_m\square C_n$ by taking the edge subset 
$\partial(\{x_{11}, x_{13}, \dots, x_{1,{n-2}},\\ x_{m2}, x_{m4}, \dots, x_{m,n-1}\})$ 
together with $\partial(S')$, where $S'\subseteq S$ and $|S'|=d-(n-1)$. 
These edge sets are disjoint. 
This is valid since $d-(n-1)\leq |S|=(m-2)\frac{n-1}{2}$ for $d\leq n+\frac{m}{2}-2$. 
By Proposition 2.1, $T$ is a $d$-transversal set with
$$\begin{aligned}
|T|=3(n-1)+4(d-n+1)=4d-n+1.
\end{aligned}$$
Thus, $\tau_{d}(P_m\square C_n)\leq 4d-n+1$.

For $n+\frac{m}{2}-1\leq d\leq \frac{mn}{2}$, 
let $S''\subseteq S$ and $|S''|=\frac{mn}{2}-d$. 
Let $G$ be the spanning subgraph of $P_m\square C_n$ satisfying $E(G)=\partial(S'')$. 
This is valid since $\frac{mn}{2}-d\leq (m-2)\frac{n-1}{2}$ for $d\geq n+\frac{m}{2}-1$. 
As $P_m\square C_n$ has perfect matchings, 
$\nu(G)=\frac{mn}{2}-d$. 
Let $T'=E(P_m\square C_n)\backslash \partial(S'')$. 
For any perfect matching $M$, $|T'\cap M|\geq d$.
Thus, $T'$ is a $d$-transversal set with
$$\begin{aligned}
|T'|=2mn-n-4(\frac{mn}{2}-d)=4d-n.
\end{aligned}$$
Hence, $\tau_{d}(P_m\square C_n)\leq 4d-n$.
\end{proof}

From Lemmas 3.8 to 3.11, we obtain the following result.

\begin{The}
Let $P_m\square C_n$ be a cylindrical grid with $m$ even and $n$ odd. Then\\
1. for $n\geq 3$, $\tau_{d}(P_2\square C_n)=3d$ if $1\leq d\leq n$.\\
2. for $m\geq 4$ and $n\geq 3$,\\
$(a) \hspace{0.1cm}\tau_{d}(P_m\square C_n)=3d$ if $1\leq d\leq n-1$,\\
$(b) \hspace{0.1cm}4d-n-min\{\lfloor \frac{d-n}{n-2}\rfloor, \lfloor\frac{mn-2d}{2n-2}\rfloor\} \leq \tau_{d}(P_m\square C_n)\leq 4d-n+1$ if $n\leq d\leq n+\frac{m}{2}-2$,\\
$(c) \hspace{0.1cm}4d-n-min\{\lfloor \frac{d-n}{n-2}\rfloor, \lfloor\frac{mn-2d}{2n-2}\rfloor\} \leq \tau_{d}(P_m\square C_n)\leq 4d-n$ if $n+\frac{m}{2}-1\leq d< \frac{mn}{2}$,\\
$(d) \hspace{0.1cm}\tau_{\frac{mn}{2}}(P_m\square C_n)=2mn-n$.
\end{The}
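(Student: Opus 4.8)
The plan is to assemble Theorem 3.12 as a direct corollary of Lemmas 3.8--3.11, so the work is entirely one of \emph{bookkeeping}: I must check that the index ranges appearing in those four lemmas tile exactly the ranges claimed in the statement, with no gaps, and that the two exact endpoints ($d=n-1$ and $d=\frac{mn}{2}$) join correctly with the interior estimates. I would organize the argument around the two cases $m=2$ (Part 1) and $m\ge 4$ (Part 2), since Lemmas 3.9--3.11 all require $m\ge 4$, while Lemma 3.8 already covers $m=2$ on its own.

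First I would settle Part 1. The crucial observation is that when $m=2$ one has $\frac{mn}{2}=n$, so the whole interval $1\le d\le n$ is covered by Lemma 3.8 alone. For $1\le d\le n-1$ its first assertion gives $\tau_d(P_2\square C_n)=3d$, and at the right endpoint $d=n=\frac{mn}{2}$ its second assertion gives $\tau_{n}(P_2\square C_n)=2mn-n=4n-n=3n=3d$. Hence the stable-set value and the all-edges value agree, and the single formula $3d$ holds throughout $1\le d\le n$. In Part 2, the two exact subclaims are equally immediate: (a) is the first assertion of Lemma 3.8 verbatim, and (d) is its second assertion read at $d=\frac{mn}{2}$.

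The only genuine content of the assembly lies in the middle range $n\le d<\frac{mn}{2}$, covered by (b) and (c). For the upper bounds I would invoke the two regimes of Lemma 3.11, which partition this range precisely at the threshold $d=n+\frac{m}{2}-1$: for $n\le d\le n+\frac{m}{2}-2$ it yields $\tau_d\le 4d-n+1$, and for $n+\frac{m}{2}-1\le d<\frac{mn}{2}$ it yields $\tau_d\le 4d-n$. For the lower bound, common to both (b) and (c), I would note that Lemmas 3.9 and 3.10 each supply a valid bound of the shape $4d-n$ minus a floor term, and that both are simultaneously in force on the overlap $n\le d<\frac{mn}{2}$ (Lemma 3.9 needs $n\le d$, Lemma 3.10 needs only $d<\frac{mn}{2}$). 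Keeping the sharper of the two therefore gives $\tau_d(P_m\square C_n)\ge 4d-n-\min\{\lfloor (d-n)/(n-2)\rfloor,\ \lfloor (mn-2d)/(2n-2)\rfloor\}$, which is exactly the lower bound stated in (b) and (c). I expect the main obstacle to be purely organizational: verifying that the four lemma ranges fit together without a gap at $d=n$, where Part (a)'s exact formula ends and the two-sided estimates begin, and confirming the $m=2$ coincidence $\frac{mn}{2}=n$ that lets the endpoint value of Lemma 3.8 supply the missing case $d=n$. No new construction or estimate is required.
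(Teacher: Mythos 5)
Your proposal is correct and follows exactly the paper's route: Theorem 3.12 is stated there as an immediate consequence of Lemmas 3.8--3.11, and your bookkeeping (the $m=2$ coincidence $\frac{mn}{2}=n$ making $2mn-n=3n$, part (a) and (d) from Lemma 3.8, the $\min$ of the two lower bounds from Lemmas 3.9 and 3.10, and the two upper-bound regimes of Lemma 3.11) is precisely the verification the paper leaves implicit. No discrepancy to report.
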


For $n=3$, we can obtain an explicit formula for $\tau_{d}(P_m\square C_3)$.

\begin{Lem}
For even $m\geq4$, 
$\tau_{d}(P_m\square C_3)=3d$ if $1\leq d\leq \frac{m}{2}+2$.
\end{Lem}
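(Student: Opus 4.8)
The plan is to establish $\tau_d(P_m\square C_3)=3d$ by proving the two matching bounds $3d\le\tau_d(P_m\square C_3)\le 3d$ for $1\le d\le\frac m2+2$. For the lower bound I would reuse the three pairwise disjoint perfect matchings $M_1,M_2,M_3$ of $P_m\square C_3$ already exhibited in Lemma 3.8 (specialized to $n=3$). Since any $d$-transversal set $T'$ meets each $M_i$ in at least $d$ edges and the $M_i$ are edge-disjoint, $|T'|\ge|T'\cap M_1|+|T'\cap M_2|+|T'\cap M_3|\ge 3d$; this is legitimate because $\frac m2+2\le\frac{3m}2=\nu(P_m\square C_3)$ for $m\ge4$.

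The upper bound is where the real work lies, and the key realization is that Proposition 2.1 alone is too weak here: every degree-$3$ vertex lies in one of the two end-triangles (rows $1$ and $m$), and the three vertices of a triangle are pairwise adjacent, so a stable set of degree-$3$ vertices has size at most $2$ and $\partial(\cdot)$ of such a stable set can realize size $3d$ only up to $d=2$. Instead I would build the transversal from horizontal edge cuts. For odd $i$ with $1\le i\le m-1$, let $C_i=\{v_{i1},v_{i2},v_{i3}\}$ be the cut separating rows $1,\dots,i$ from rows $i+1,\dots,m$. Because rows $1,\dots,i$ contain $3i$ vertices, an odd number, every perfect matching $M$ must send an odd, hence positive, number of its edges across $C_i$, so $|M\cap C_i|\ge1$. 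There are exactly $\frac m2$ such odd cuts, they are pairwise edge-disjoint, and therefore for $1\le d\le\frac m2$ the union of any $d$ of them is a $d$-transversal set of size $3d$.

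To reach $d=\frac m2+1$ and $d=\frac m2+2$ I would strengthen the two end cuts. Writing $R_1$ for the set of all six edges meeting row $1$ (its three triangle edges together with $C_1$) and $R_m$ analogously at row $m$, every perfect matching covers the three vertices of row $1$ using at least $\lceil 3/2\rceil=2$ edges of $R_1$, so $|M\cap R_1|\ge2$ and likewise $|M\cap R_m|\ge2$, while $|R_1|=|R_m|=6$. Taking $T=R_1\cup C_3\cup C_5\cup\dots\cup C_{m-1}$ gives a $(\frac m2+1)$-transversal set of size $6+3(\frac m2-1)=3(\frac m2+1)$, and $T=R_1\cup R_m\cup C_3\cup C_5\cup\dots\cup C_{m-3}$ gives a $(\frac m2+2)$-transversal set of size $6+6+3(\frac m2-2)=3(\frac m2+2)$. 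For $m\ge4$ these pieces are pairwise edge-disjoint, so the guaranteed per-piece intersection numbers add, yielding $|M\cap T|\ge d$ in each case.

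I expect the main obstacle to be precisely this additivity bookkeeping: one must check that the chosen cuts and end-sets are genuinely edge-disjoint (so the per-piece bounds sum), that the cardinalities come out to exactly $3d$, and that the parity argument guaranteeing $|M\cap C_i|\ge1$ is airtight. A secondary subtlety worth flagging is that the naive complement bound $\nu(P_m\square C_3-T)\le\nu-d$ is far too weak here — for the cut set $T$ the quantity $\nu(P_m\square C_3-T)$ stays close to $\frac{3m}2$ even though $T$ is a good transversal — so the direct argument that every perfect matching crosses each odd cut is essential rather than a convenience.
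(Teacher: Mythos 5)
Your proposal is correct and follows essentially the same route as the paper: the lower bound via the three pairwise disjoint perfect matchings of Lemma 3.8, and the upper bound via the edge-disjoint odd vertical cuts $\{v_{i1},v_{i2},v_{i3}\}$ (justified by the same parity argument on $3i$ vertices), augmented for $d=\frac m2+1$ and $d=\frac m2+2$ by the row-$1$ and row-$m$ horizontal edges---indeed your sets $R_1\cup C_3\cup\dots\cup C_{m-1}$ and $R_1\cup R_m\cup C_3\cup\dots\cup C_{m-3}$ are exactly the paper's transversal sets, just regrouped. The only (cosmetic) difference is that you verify $|M\cap R_1|\ge 2$ by a clean covering count $\lceil 3/2\rceil=2$, whereas the paper does a small case analysis on whether one or three vertical edges at row $1$ lie in $M$.
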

\begin{proof}
Let $M$ be a perfect matching of $P_m\square C_3$. 
For each odd integer $i$, $1\leq i\leq m-1$, 
we show that $|\{v_{i1},v_{i2},v_{i3}\}\cap M|\geq 1$. 
By contradiction, suppose there exists an odd $i$ such that $|\{v_{i1},v_{i2},v_{i3}\}\cap M|= 0$. 
Then the subgraph induced by the vertex set $X=\{x_{kj}|1\leq k\leq i, 1\leq j\leq 3\}$ 
must contain a perfect matching. This is impossible since $|X|=3i$ is odd.

For $1\leq d\leq \frac{m}{2}$, 
let $T=\{v_{2i-1,1}, v_{2i-1,2}, v_{2i-1,3}|1\leq i\leq d\}$. 
From the above discussion, 
for any perfect matching $M$, we have $|T\cap M|\geq d$.

If $d=\frac{m}{2}+1$, 
let $T=\{v_{2i-1,1}, v_{2i-1,2}, v_{2i-1,3}|1\leq i\leq \frac{m}{2}\}\cup \{h_{11}, h_{12}, h_{13}\}$. 
For any perfect matching $M$, if $|\{v_{11},v_{12},v_{13}\}\cap M|= 1$ (resp. $3$), 
then $|\{h_{11}, h_{12}, h_{13}\}\cap M|= 1$ (resp. $0$). 
Thus $|\{v_{11},v_{12},v_{13}, h_{11}, h_{12}, h_{13}\}\cap M|\geq 2$, 
and we obtain $|T\cap M|\geq \frac{m}{2}+1$.

If $d=\frac{m}{2}+2$, let 
$T=\{v_{2i-1,1}, v_{2i-1,2}, v_{2i-1,3}|1\leq i\leq \frac{m}{2}\}\cup \{h_{11}, h_{12}, h_{13}, h_{m1}, h_{m2}, h_{m3}\}$. 
Similarly, for any perfect matching $M$, 
we have $|\{v_{m-1,1},v_{m-1,2},v_{m-1,3}, h_{m1}, h_{m2}, h_{m3}\}\cap M|\geq 2$, 
hence $|T\cap M|\geq \frac{m}{2}+2$. 
Therefore, $T$ is a $d$-transversal set with $|T|=3d$.

Since $P_m\square C_3$ has three disjoint perfect matchings $M_1$, $M_2$, $M_3$ (see Fig.\hspace{0.06cm}\ref{G}), 
where $M_1$, $M_2$ and $M_3$ are defined as in Lemma 3.8, 
any $d$-transversal set must contain at least $3d$ edges. 
Thus $\tau_{d}(P_m\square C_3)=3d$.
\end{proof}

\begin{figure}[!htbp]
\begin{center}
\includegraphics[totalheight=3.2cm]{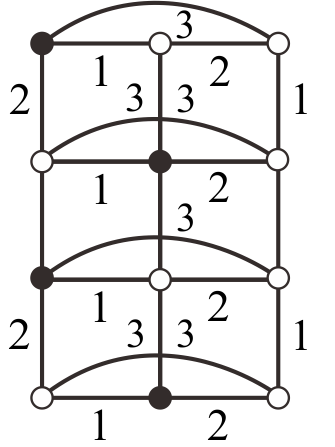}
\caption{\label{G}\small{Perfect matchings $M_1$, $M_2$ and $M_3$ of $P_4\square C_3$.}}
\end{center}
\end{figure}

\begin{Lem}
For even $m\geq4$, 
$\tau_{d}(P_m\square C_3)=4d-\frac{m}{2}-2+\lfloor \frac{2d-m-2}{4}\rfloor$ if $\frac{m}{2}+3\leq d\leq \frac{3m}{2}$.
\end{Lem}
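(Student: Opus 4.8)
The plan is to prove matching lower and upper bounds and show both equal the stated closed form. For the lower bound I would invoke Lemma 3.10 with $n=3$, which gives $\tau_{d}(P_m\square C_3)\ge 4d-3-\lfloor\frac{3m-2d}{4}\rfloor$ for $d<\frac{3m}{2}$. Writing $s=\frac{3m}{2}-d$ (so that $3m-2d=2s$ and $\lfloor\frac{2s}{4}\rfloor=\lfloor\frac s2\rfloor$), a short arithmetic check shows $4d-3-\lfloor\frac s2\rfloor=6m-3-\lfloor\frac{9s}{2}\rfloor$, and another check shows this equals $4d-\frac m2-2+\lfloor\frac{2d-m-2}{4}\rfloor$. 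Thus the lower bound is already exactly the claimed value; for the endpoint $d=\frac{3m}{2}$ (where $s=0$ and Lemma 3.10 does not apply) the value $6m-3$ is furnished by $\tau_{\frac{mn}{2}}(P_m\square C_n)=2mn-n$. So essentially all the work is in the upper bound.

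For the upper bound I would build the complement. With $s=\frac{3m}{2}-d\in[0,m-3]$, the goal is an edge set $\bar T$ with $|\bar T|=\lfloor\frac{9s}{2}\rfloor$ such that $|M\cap\bar T|\le s$ for every perfect matching $M$; then $T=E(P_m\square C_3)\setminus\bar T$ is a $d$-transversal with $|T|=6m-3-\lfloor\frac{9s}{2}\rfloor$, the desired size. The building block is an \emph{even-start prism}: for $1\le i\le \frac m2-1$ put
\[ B_i=\{h_{2i,1},h_{2i,2},h_{2i,3},\,h_{2i+1,1},h_{2i+1,2},h_{2i+1,3},\,v_{2i,1},v_{2i,2},v_{2i,3}\}, \]
the nine edges of the two triangles on rows $2i,2i+1$ together with the vertical group joining them. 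I would take $\bar T$ to be the union of $k=\lfloor s/2\rfloor$ pairwise edge-disjoint blocks $B_1,\dots,B_k$ (placed on rows $2,3,\dots,2k+1$), and, when $s$ is odd, one additional star $\partial(w)$ at an interior degree-$4$ vertex $w$ chosen off those rows. This gives $|\bar T|=9\lfloor s/2\rfloor+4\,(s\bmod 2)=\lfloor\frac{9s}{2}\rfloor$, and the room required ($2k+1\le m-1$ together with one spare interior row for $w$) is guaranteed by $s\le m-3$.

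The crux is the claim $|M\cap B_i|\le 2$ for every perfect matching $M$. Writing $c_r=|\{h_{r1},h_{r2},h_{r3}\}\cap M|\in\{0,1\}$ and $d_r=|\{v_{r1},v_{r2},v_{r3}\}\cap M|$, the covering equations $2c_{2i}+d_{2i-1}+d_{2i}=3$ and $2c_{2i+1}+d_{2i}+d_{2i+1}=3$ for rows $2i$ and $2i+1$ yield $|M\cap B_i|=c_{2i}+c_{2i+1}+d_{2i}=3-\tfrac12(d_{2i-1}+d_{2i+1})$. The flanking groups $\{v_{2i-1,j}\}$ and $\{v_{2i+1,j}\}$ have \emph{odd} index, so the same odd-cut parity argument used in Lemma 3.13 (the first $2i-1$, respectively $2i+1$, rows span an odd number $3(2i-1)$, respectively $3(2i+1)$, of vertices) forces $d_{2i-1},d_{2i+1}\ge 1$; hence $|M\cap B_i|\le 2$. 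Summing over the edge-disjoint blocks and adding the single edge any perfect matching contributes to a vertex star gives $|M\cap\bar T|\le 2k+(s\bmod 2)=s$, which finishes the construction. The main obstacle is precisely to isolate the even-start prism: an odd-start prism can be matched entirely internally and then meets $M$ in three edges, so it is exactly the forced positivity of the two surrounding odd-index vertical groups that drives the per-block bound down to $2$ and produces the ratio $9:2$ underlying $\lfloor\frac{9s}{2}\rfloor$.
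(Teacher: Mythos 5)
Your proposal is correct and is essentially the paper's own argument: the lower bound you import from Lemma 3.10 (via the arithmetic identity $4d-3-\lfloor\frac{3m-2d}{4}\rfloor=4d-\frac{m}{2}-2+\lfloor\frac{2d-m-2}{4}\rfloor$, with Lemma 3.8 covering the endpoint $d=\frac{3m}{2}$) is exactly the bound the paper re-derives inside this proof by the same column-averaging argument, and your nine-edge prism blocks $B_i$ with the odd-cut parity bound $|M\cap B_i|\le 2$ coincide with the pairs $T_i\cup T_{i+1}$ in the paper's construction, justified there by the same Lemma 3.13-style parity argument. The only cosmetic difference is that you assemble the complement from whole blocks plus a vertex star when $s=\frac{3m}{2}-d$ is odd, whereas the paper takes a suffix of its edge partition that may end in a single 4- or 5-edge piece.
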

\begin{proof}
Let $t=\lfloor \frac{2d-m-2}{4}\rfloor$. 
We show that no $d$-transversal set $T$ of $P_m\square C_3$ can have at most $4d-\frac{m}{2}-3+t$ edges. 
It suffices to prove that for any $E\subseteq E(P_m\square C_3)$ with
$$\begin{aligned}
|E|\geq 6m-3-(4d-\frac{m}{2}-3+t)=4(\frac{3m}{2}-d)+\frac{m}{2}-t,
\end{aligned}$$
there exists a perfect matching $M$ of $P_m\square C_3$ such that $|E\cap M|\geq \frac{3m}{2}-d+1$. 
If $|E\cap M_i|\geq \frac{3m}{2}-d+1$ for $1\leq i\leq 3$, 
then we are done since $M_1$, $M_2$ and $M_3$ are perfect matchings. 
Hence we may assume that $M_4$ contains at least $2m-d-t$ edges of $E$, 
where $M_1$, $M_2$, $M_3$ and $M_4$ are defined as in Lemma 3.9. 
The edge set of $M_4$ is contained in $M^j_4$ except for the column $j$, $1\leq j\leq 3$.

We claim that there exists a column $j'$ such that the union of all columns 
$j\neq j'$ contains at least $\frac{3m}{2}-d+1$ edges of $E\cap M_4$. 
Let $e_j$ be the number of edges of $E\cap M_4$ in column $j$. 
Suppose, to the contrary, that
$$\begin{aligned}
\sum\limits_{j\neq j'}e_j\leq \frac{3m}{2}-d
\end{aligned}$$
for all $j'$. Summing this inequality over all columns, we deduce
$$\begin{aligned}
\sum\limits_{ j'=1}\limits^{3}\sum\limits_{j\neq j'}e_j\leq \sum\limits_{j'=1}\limits^{3}(\frac{3m}{2}-d),
\end{aligned}$$
which simplifies to
$$\begin{aligned}
|E\cap M_4|=\sum\limits_{ j=1}\limits^{3}e_j&\leq \frac{3}{2}(\frac{3m}{2}-d)<2m-d-t.
\end{aligned}$$
Here the last inequality follows from $\frac{2d-m}{4}> t$, contradicting $|E\cap M_4|\geq 2m-d-t$. 
Hence $\tau_{d}(P_m\square C_3)\geq 4d-\frac{m}{2}-2+\lfloor \frac{2d-m-2}{4}\rfloor$.

Now we construct such a minimum $d$-transversal set $T$. 
For $\frac{m}{2}+3\leq d\leq \frac{3m}{2}$, take the edge subset 
$S=\{v_{2i-1,1}, v_{2i-1,2}, v_{2i-1,3}|1\leq i\leq \frac{m}{2}\}\cup \{h_{11}, h_{12}, h_{13}, h_{m1}, h_{m2}, h_{m3}\}$. 
Next, we add sets $T_2\cup T_3\cup \dots \cup T_{d-\frac{m}{2}-1}$, 
where $T_i=\{h_{i1}, h_{i2}, h_{i3}, v_{i1}, v_{i3}\}$ for even $2\leq i\leq m-2$ and 
$T_i=\{h_{i1}, h_{i2}, h_{i3}, v_{i-1,2}\}$ for odd $3\leq i\leq m-1$. 
Let $T=S\cup T_2\cup T_3\cup \dots \cup T_{d-\frac{m}{2}-1}$. 
By an argument similar to Lemma 3.13, 
we have $|T_i\cap M|\leq 1$ for odd $i$ and $|(T_i\cup T_{i+1})\cap M|\leq 2$ for even $i$, 
where $M$ is any perfect matching of $P_m\square C_3$. Consequently,
$$\begin{aligned}
 |(\{T_2\cup T_3\cup \dots \cup T_{m-1}\} \backslash \{T_2\cup T_3\cup \dots \cup T_{d-\frac{m}{2}-1}\})\cap M|&\leq m-1-(d-\frac{m}{2})+1\\
 &=\frac{3m}{2}-d.
\end{aligned}$$
Thus, $|T\cap M|\geq \frac{3m}{2}-(\frac{3m}{2}-d)=d$ for every perfect matching $M$. 
This implies $T$ is a $d$-transversal set of $P_m\square C_3$. For its cardinality,
$$\begin{aligned}
|T|=3(\frac{m}{2}+2)+4(d-\frac{m}{2}-2)+\lfloor \frac{2d-m-2}{4}\rfloor=4d-\frac{m}{2}-2+\lfloor \frac{2d-m-2}{4}\rfloor.
\end{aligned}$$
The proof is completed.
\end{proof}

The lower bounds in Theorem 3.12 are sharp. 
To see this, consider the graph $G=P_6\square C_3$ for $3\leq d\leq 7$. 
From Theorem 3.12 2.(b), we have $9\leq \tau_{3}(G)\leq 10$ and $12\leq \tau_{4}(G)\leq 14$. 
From Theorem 3.12 2.(c), it follows that $15\leq \tau_{5}(G)\leq 17$, $20\leq \tau_{6}(G)\leq 21$ 
and $24\leq \tau_{7}(G)\leq 25$. However, by applying Lemmas 3.13 and 3.14, 
we obtain the exact values $\tau_{3}(G)=9$, $\tau_{4}(G)=12$, $\tau_{5}(G)=15$, 
$\tau_{6}(G)=20$ and $\tau_{7}(G)=24$, respectively.

\subsection{$m$ and $n$ are odd}

In this case, $P_m\square C_n$ is non-bipartite and admits an almost perfect matching. 
Moreover, for every vertex $v$, there exists an almost perfect matching that covers all vertices except $v$. 
Consequently, we have $S(P_m\square C_n)=F_0(P_m\square C_n)=F_1(P_m\square C_n)=\emptyset$.

The following result was first established in \cite{ZRB2009}. 
However, the authors did not provide a proof and left it to the reader. 
Here, we present a proof.

\begin{Lem}\rm{\cite{ZRB2009}}
For odd $n\geq 3$, 
$\tau_{d}(C_n)=2d+1$ if $1\leq d\leq \frac{n-1}{2}$.
\end{Lem}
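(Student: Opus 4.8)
The plan is to regard $C_n$ (with $n$ odd) as a cycle on vertices $v_0, v_1, \ldots, v_{n-1}$ with edges $e_j = v_j v_{j+1}$ (indices modulo $n$), and to establish the two inequalities $\tau_d(C_n) \le 2d+1$ and $\tau_d(C_n) \ge 2d+1$ separately. The starting point is the structure of maximum matchings: since $\nu(C_n) = \frac{n-1}{2}$ and deleting the single uncovered vertex $v_i$ leaves the even path $P_{n-1}$, which has a unique perfect matching, every maximum matching is almost perfect. Hence there are exactly $n$ maximum matchings $M_0, M_1, \ldots, M_{n-1}$, where $M_i$ misses $v_i$, and explicitly $M_i = \{e_{i+1}, e_{i+3}, \ldots, e_{i+n-2}\}$ (indices modulo $n$). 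Recording this enumeration at the outset is what both bounds will rely on.

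For the upper bound I would exhibit $T = \{e_0, e_1, \ldots, e_{2d}\}$, a block of $2d+1$ consecutive edges, which is admissible since $2d+1 \le n$. The complementary edge set $E(C_n) \setminus T = \{e_{2d+1}, \ldots, e_{n-1}\}$ spans the arc $v_{2d+1}, v_{2d+2}, \ldots, v_{n-1}, v_0$, which is a path $R$ on $n-2d$ vertices. For any maximum matching $M$, the edges of $M$ outside $T$ form a matching of $R$, so $|M \cap (E(C_n)\setminus T)| \le \lfloor \frac{n-2d}{2}\rfloor = \frac{n-2d-1}{2}$, the last equality using that $n-2d$ is odd. Consequently $|M \cap T| = |M| - |M \cap (E(C_n)\setminus T)| \ge \frac{n-1}{2} - \frac{n-2d-1}{2} = d$, so $T$ is a $d$-transversal set and $\tau_d(C_n) \le 2d+1$.

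For the lower bound I would run a double-counting argument over the $n$ maximum matchings. From $\sum_{i=0}^{n-1}|M_i| = n \cdot \frac{n-1}{2}$ together with the rotational symmetry of $C_n$, every edge $e_j$ lies in exactly $\frac{n-1}{2}$ of the matchings $M_0, \ldots, M_{n-1}$. Thus for any $d$-transversal set $T$, summing $|T \cap M_i| \ge d$ over all $i$ gives $nd \le \sum_{i=0}^{n-1} |T \cap M_i| = \sum_{e \in T} |\{i : e \in M_i\}| = |T|\,\frac{n-1}{2}$, whence $|T| \ge \frac{2nd}{n-1} = 2d + \frac{2d}{n-1} > 2d$. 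Since $|T|$ is an integer this forces $|T| \ge 2d+1$, and combining the two bounds yields $\tau_d(C_n) = 2d+1$.

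The individual steps are routine, so the main obstacle is getting two exact facts right rather than any deep difficulty. First, the lower bound hinges entirely on the \emph{exact} per-edge count $\frac{n-1}{2}$, so I would verify carefully that the list $M_0, \ldots, M_{n-1}$ is complete (no maximum matching is omitted) and that the symmetry argument legitimately forces every edge into the same number of matchings. Second, in the upper bound the crucial parity fact is that $n-2d$ is odd; this is exactly what produces the $-1$ turning the bound $\frac{n-1}{2}-\frac{n-2d}{2}$ into $d$ rather than something smaller, and hence what separates the value $2d+1$ from $2d$.
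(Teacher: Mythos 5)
Your proof is correct and follows essentially the same route as the paper: the identical double-counting argument over the $n$ almost perfect matchings (each edge lying in exactly $\frac{n-1}{2}$ of them, giving $|T|\geq \frac{2nd}{n-1}>2d$), and the same upper-bound set, since the paper's $\partial(x_{11})\cup\cdots\cup\partial(x_{1,2d})$ is precisely your block of $2d+1$ consecutive edges. Your only addition is the explicit parity verification that this block is a $d$-transversal set (via the complementary path on $n-2d$ vertices), a detail the paper merely asserts.
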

\begin{proof}
For any $d$-transversal set $T$, 
we first show that $|T|\geq 2d+1$. 
Consider the following $n$ almost perfect matchings of $C_n$: 
$M_1=\{h_{12}, h_{14}, \dots, h_{1,n-1}\}$, 
$M_2=\{h_{1n}, h_{13}, \dots, h_{1,n-2}\}$, $\dots,$ $M_n=\{h_{11}, h_{13}, \dots, h_{1,n-2}\}$. 
Note that $x_{1i}$ is the only uncovered vertex by $M_i$, 
where $1\leq i\leq n$. For each edge $e$ of $C_n$, we have
$$\begin{aligned}
|\{M_i|e\in M_i; 1\leq i\leq n \}|=\frac{n-1}{2}.
\end{aligned}$$
Thus, for every $d$-transversal set $T$, 
$$\begin{aligned}
\frac{n-1}{2}|T|=\sum\limits_{e\in T}\sum\limits^n_{i=1}|\{e\}\cap M_i|=\sum\limits^n_{i=1}\sum\limits_{e\in T}|\{e\}\cap M_i|\geq nd,
\end{aligned}$$
implying $|T|\geq \frac{2nd}{n-1}>2d$.

For $1\leq d\leq \frac{n-1}{2}$, 
we construct a $d$-transversal set $T$ of $C_n$ by taking 
$T=\partial(x_{11})\cup \partial(x_{12})\cup \dots \cup \partial(x_{1,2d-1}) \cup \partial(x_{1,2d})$, 
which is valid since $2d\leq n-1$. 
Thus, $\tau_{d}(C_n)\leq |T|=2d+1$. 
From the above argument, we conclude that $\tau_{d}(C_n)=2d+1$.
\end{proof}

\begin{Lem}
For odd integers $m\geq 3$ and $n\geq 3$, 
$\tau_{d}(P_m\square C_n)=4d-n+2$ if $\frac{m-3}{2}+n\leq d\leq \frac{mn-1}{2}$.
\end{Lem}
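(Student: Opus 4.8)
The plan is to establish the two matching inequalities $\tau_d(P_m\square C_n)\le 4d-n+2$ and $\tau_d(P_m\square C_n)\ge 4d-n+2$ separately. Throughout I use that, as observed at the start of this subsection, $P_m\square C_n$ with $m,n$ odd is factor-critical: every vertex is missed by some almost perfect matching, so every maximum matching has size $\nu=\frac{mn-1}{2}$ and misses exactly one vertex. Writing $p=\nu-d$, the hypothesis $d\ge\frac{m-3}{2}+n$ is exactly the inequality $p\le\frac{(m-2)(n-1)}{2}$, and this is the only place the lower endpoint of the range enters.

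\textbf{Upper bound.} I would mimic the complementary construction of Lemma 3.11. Choose a stable set $S''$ consisting of $p$ vertices of degree $4$, all lying in the interior rows $2,\dots,m-1$; such a set exists because the interior grid $P_{m-2}\square C_n$ contains an independent set of size $\frac{(m-2)(n-1)}{2}\ge p$ (take the $\frac{n-1}{2}$ even-indexed columns in the odd rows together with the $\frac{n-1}{2}$ odd-indexed columns in the even rows, which are pairwise non-adjacent). Set $T=E(P_m\square C_n)\setminus\partial(S'')$. Because $S''$ is stable, distinct vertices of $S''$ meet distinct edges of any matching, so every maximum matching $M$ satisfies $|M\cap\partial(S'')|\le|S''|=p$ and hence $|T\cap M|=\nu-|M\cap\partial(S'')|\ge\nu-p=d$; thus $T$ is a $d$-transversal set. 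Since the chosen vertices have degree $4$ and form a stable set, $|\partial(S'')|=4p$, giving $|T|=(2mn-n)-4p=4d-n+2$.

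\textbf{Lower bound.} For any $d$-transversal set $T$ put $E=E(P_m\square C_n)\setminus T$; then $|E\cap M|=\nu-|T\cap M|\le\nu-d=p$ for every maximum matching $M$, and it suffices to prove $|E|\le 4p$, which rearranges to $|T|\ge 4d-n+2$. To do this I would partition $E(P_m\square C_n)=M_1\cup M_2\cup M_3\cup M_4$, where $M_1,M_2,M_3$ are three pairwise edge-disjoint maximum (almost perfect) matchings and $M_4$ is the set of remaining edges; a degree count then forces $|M_4|=\frac{(m-2)n+3}{2}$. The three matchings should be chosen so that each is missed only at a boundary vertex and covers every interior vertex, which makes the leftover $M_4$ a matching (each interior vertex keeps degree $1$, and only three boundary vertices retain a single incident edge), and moreover so that $M_4$ extends to a single maximum matching $M^{*}$ by adding horizontal edges in rows $1$ and $m$. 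Granting such a partition, the argument closes at once: $|E\cap M_i|\le p$ for $i=1,2,3$ since these are maximum matchings, while $E\cap M_4\subseteq E\cap M^{*}$ gives $|E\cap M_4|\le|E\cap M^{*}|\le p$; summing over the four parts yields $|E|\le 4p$.

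The main obstacle is the explicit construction of $M_1,M_2,M_3,M_4$ for all odd $m\ge 3$ and odd $n\ge 3$: one must exhibit three edge-disjoint almost perfect matchings whose uncovered vertices all lie on the two boundary cycles and whose complement is simultaneously a matching and a subset of a maximum matching. I expect this to require a careful treatment of the column structure, handling the odd cycle $C_n$ and the two boundary rows separately and guided by explicit figures, exactly as in Lemmas 3.8 and 3.9. Once the four matchings are displayed and verified, the counting step of the lower bound and the cardinality computation of the upper bound are both routine, and together they give $\tau_d(P_m\square C_n)=4d-n+2$.
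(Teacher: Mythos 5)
Your upper bound is complete and is essentially the paper's own argument: take a stable set $S''$ of $p=\frac{mn-1}{2}-d$ interior degree-$4$ vertices (your row-alternating column pattern is the correct reading of the paper's stable set), put $T=E(P_m\square C_n)\setminus\partial(S'')$, and count $|T\cap M|\ge\nu-p=d$ and $|T|=(2mn-n)-4p=4d-n+2$. No issues there, and your reduction of the hypothesis $d\ge\frac{m-3}{2}+n$ to $p\le\frac{(m-2)(n-1)}{2}$ is also right.

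The lower bound, however, contains a genuine gap, and you name it yourself: the three pairwise edge-disjoint almost perfect matchings $M_1,M_2,M_3$ --- each missing a distinct boundary vertex and covering every interior vertex, with leftover $M_4$ extendable to a maximum matching $M^{*}$ --- are never exhibited. Your entire lower bound is conditional on ``granting such a partition,'' and since this existence statement is the only nontrivial content of the inequality $\tau_d\ge 4d-n+2$, the proposal as written proves only the upper bound. The gap is fillable, and indeed the paper's own matchings satisfy your requirements: $M_1=\{h_{i1},h_{i3},\dots,h_{i,n-2}\mid i\in\{1,\dots,m\}\}\cup\{v_{2n},v_{4n},\dots,v_{m-1,n}\}$ (missing $x_{1n}$), $M_2=\{v_{1j},v_{3j},\dots,v_{m-2,j}\mid j\in\{1,\dots,n\}\}\cup\{h_{m2},h_{m4},\dots,h_{m,n-1}\}$ (missing $x_{m1}$), and $M_3=\{v_{2j},v_{4j},\dots,v_{m-1,j}\mid 2\le j\le n-1\}\cup\{h_{1n},h_{2n},\dots,h_{mn}\}\cup\{h_{12},h_{14},\dots,h_{1,n-3}\}$ (missing $x_{1,n-1}$); their complement consists of $\{h_{ij}\mid j \text{ even},\,2\le i\le m-1\}\cup\{h_{1,n-1}\}\cup\{v_{i1}\mid i \text{ even}\}$, which is a matching and extends to an almost perfect matching by pairing consecutive uncovered vertices in rows $1$ and $m$ --- but verifying disjointness, the locations of the missed vertices, and this extension is exactly the work the lemma demands. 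Note also that the paper's lower-bound counting is organized slightly differently from yours: rather than requiring the leftover of a partition to sit inside a maximum matching, it directly constructs a fourth almost perfect matching $M_4$ with $M_4\cap M_1=\emptyset$ and $|M_4\cap(M_2\cup M_3)|=n-2$, and then bounds $|T'|\ge\sum_{i=1}^{3}|T'\cap M_i|+|T'\cap(M_4\setminus(M_2\cup M_3))|\ge 3d+(d-n+2)$. The two counting schemes are equivalent once the matchings exist (your $M^{*}$ is such a fourth matching), but neither is a proof without the explicit construction you deferred.
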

\begin{proof}
Let $S=\{x_{22}, x_{24},\dots, x_{2,n-1},\dots ,x_{m-1,2}, x_{m-1,4}, \dots, x_{m-1,n-1}\}$ 
be a stable set of $P_m\square C_n$, 
and let $S'\subseteq S$ with $|S'|=\frac{mn-1}{2}-d$. 
Assume $\frac{m-3}{2}+n\leq d\leq \frac{mn-1}{2}$. 
Let $G$ be the spanning subgraph of $P_m\square C_n$ satisfying $E(G)=\partial(S')$. 
This is valid since $\frac{mn-1}{2}-d\leq |S|=(m-2)\frac{n-1}{2}$. 
Thus, $\nu(G)\leq \frac{mn-1}{2}-d$. 
Let $T=E(P_m\square C_n)\backslash \partial(S')$. 
Then $|T\cap M|\geq d$ for every almost perfect matching $M$ of $P_m\square C_n$. 
Therefore, $T$ is a $d$-transversal set with
$$\begin{aligned}
 |T|=2mn-n-4(\frac{mn-1}{2}-d)=4d-n+2.
\end{aligned}$$
This is minimum, as we construct four almost perfect matchings $M_1$, $M_2$, $M_3$, $M_4$ satisfying 
$|M_1\cap M_2|=|M_1\cap M_3|=|M_2\cap M_3|=|M_1\cap M_4|=0$, $|M_2\cap M_4|=\frac{n-1}{2}$ 
and $|M_3\cap M_4|=\frac{n-3}{2}$. Define 
$M_1=\{h_{i1}, h_{i3}, \dots, h_{i,n-2}| i=1, 2, \dots, m\}\cup \{v_{2n}, v_{4n}, \dots, v_{m-1,n}\}$, 
$M_2=\{v_{1j}, v_{3j}, \dots, v_{m-2,j}| j=1, 2, \dots, n\}\cup \{h_{m2}, h_{m4}, \dots, h_{m,n-1}\}$, 
$M_3=\{v_{2j}, v_{4j}, \dots, v_{m-1,j}| j=2, 3, \dots, n-1\}\cup \{h_{1n}, h_{2n}, \dots, h_{mn}\} 
\cup \{h_{12}, h_{14}, \dots, h_{1,n-3}\}$, 
$M_4=\{h_{i2}, v_{i4}, \dots, v_{i,n-1}| i=1, 2, \dots, n\}\cup \{v_{21}, v_{41}, \dots, h_{m-1,1}\}$. 
These are indicated by 1, 2, 3 and 4 in Fig.\hspace{0.06cm}\ref{H}, respectively. 
Any $d$-transversal set $T'$ must satisfy $|T'\cap M_i|\geq d$ for $1\leq i\leq 3$ and
$$\begin{aligned}
|T'\cap \{M_4\backslash (M_2\cup M_3)\}|\geq d-(\frac{n-1}{2}+\frac{n-3}{2})=d-n+2,
\end{aligned}$$
implying that $|T'|\geq 4d-n-2$.
Thus, $T$ is minimum.
\end{proof}

\begin{figure}[!htbp]
  \centering
  \subfigure{
  \begin{minipage}{6cm}
  \centering
  \includegraphics[totalheight=3.8cm]{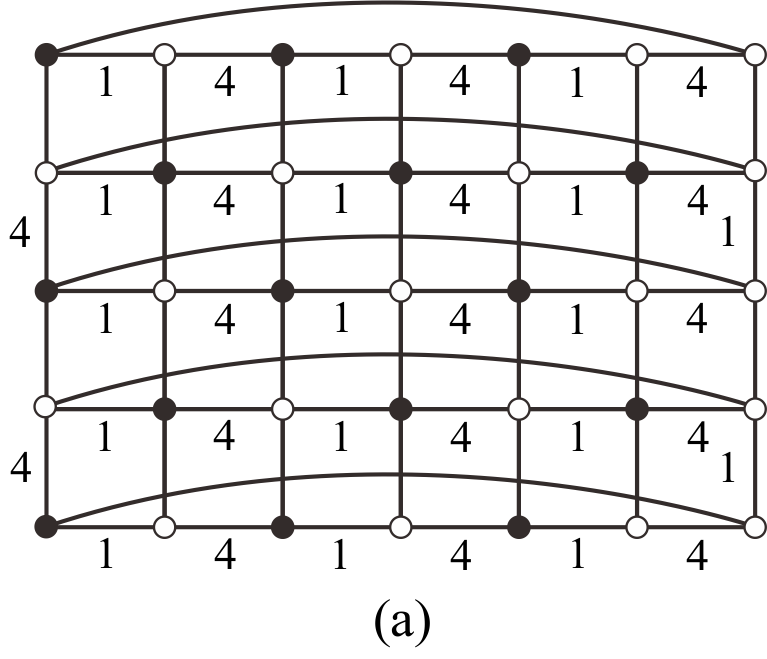}
   \end{minipage}%
   }%
  \hspace{0.5cm}
  \subfigure{
  \begin{minipage}{6cm}
  \centering
  \includegraphics[totalheight=3.8cm]{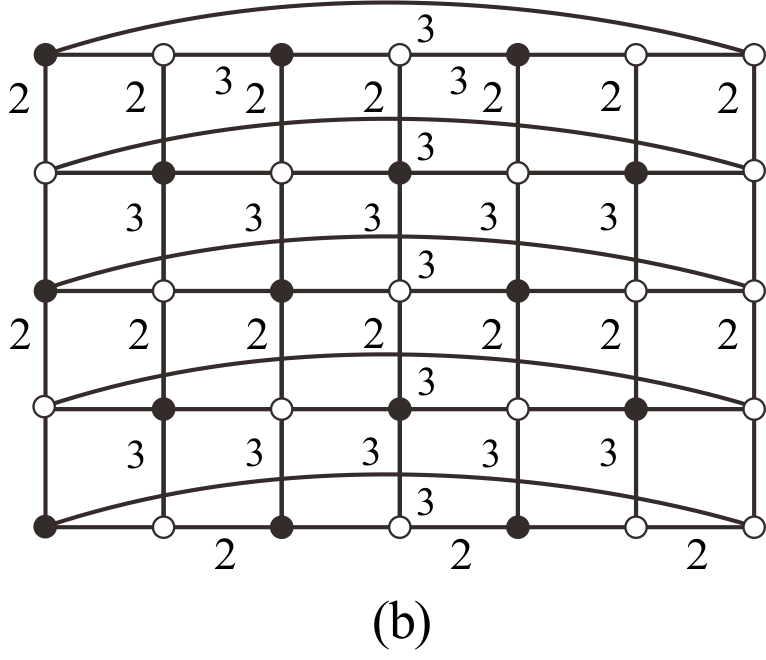}
   \end{minipage}
   }
  \caption{\label{H}\small{(a)$M_1$ and $M_4$ of $P_5\square C_7$, (b)$M_2$ and $M_3$ of $P_5\square C_7$.}}
\end{figure}

\begin{Lem}
For odd integers $m\geq 3$ and $n\geq 3$, 
$max\{3d, 4d-n+2\}\leq \tau_{d}(P_m\square C_n)\leq 3d+\frac{m+1}{2}$ if $\frac{m-1}{2}\leq d\leq \frac{m-5}{2}+n$.
\end{Lem}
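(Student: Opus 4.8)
The statement separates into a lower bound $\tau_{d}(P_m\square C_n)\ge \max\{3d,\,4d-n+2\}$ and an upper bound $\tau_{d}(P_m\square C_n)\le 3d+\frac{m+1}{2}$, and the plan is to handle the two halves independently. The lower bound is essentially inherited from the preceding lemma, while the genuine work lies in the upper-bound construction, which must avoid Proposition 2.1 entirely: since $mn$ is odd and every vertex is missed by some almost perfect matching, $S(P_m\square C_n)=\emptyset$, so no vertex is strongly covered and the stable-set trick of Proposition 2.1 is unavailable.

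For the lower bound I would reuse the four almost perfect matchings $M_1,M_2,M_3,M_4$ built in Lemma 3.16, whose intersection pattern is $M_1\cap M_2=M_1\cap M_3=M_2\cap M_3=M_1\cap M_4=\emptyset$, $|M_2\cap M_4|=\frac{n-1}{2}$ and $|M_3\cap M_4|=\frac{n-3}{2}$. Since $M_1,M_2,M_3$ are pairwise disjoint maximum matchings, any $d$-transversal set $T$ satisfies $|T|\ge\sum_{i=1}^{3}|T\cap M_i|\ge 3d$. For the second bound, note that $M_4\setminus(M_2\cup M_3)$ is disjoint from $M_1$ (as $M_1\cap M_4=\emptyset$) and from $M_2,M_3$ by construction, while $|M_4\cap(M_2\cup M_3)|=\frac{n-1}{2}+\frac{n-3}{2}=n-2$, so $|T\cap(M_4\setminus(M_2\cup M_3))|\ge d-(n-2)$. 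Adding these four disjoint contributions gives $|T|\ge 3d+(d-n+2)=4d-n+2$ whenever $d\ge n-2$, and the bound is vacuous otherwise; hence $\tau_{d}(P_m\square C_n)\ge\max\{3d,4d-n+2\}$ on the whole range.

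For the upper bound I would use the complement/spanning-subgraph construction (as in the second half of Lemma 3.11 and in Lemma 3.16). Fix the maximum stable set $S$ of size $\alpha(P_m\square C_n)=\frac{m(n-1)}{2}$ given by the brick pattern: odd rows take the odd columns $\{1,3,\dots,n-2\}$ and even rows take the even columns $\{2,4,\dots,n-1\}$. Because $m$ is odd, both boundary rows $1$ and $m$ are odd rows, so $S$ contains exactly $\frac{(m-2)(n-1)}{2}$ interior (degree-$4$) vertices and $n-1$ boundary (degree-$3$) vertices. Now set $b=\frac{m-3}{2}+n-d$; the hypothesis $\frac{m-1}{2}\le d\le\frac{m-5}{2}+n$ gives $1\le b\le n-1$. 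Let $S''\subseteq S$ be all $\frac{(m-2)(n-1)}{2}$ interior vertices of $S$ together with any $b$ of its boundary vertices, so that $|S''|=\frac{(m-2)(n-1)}{2}+b=\frac{mn-1}{2}-d=\nu(P_m\square C_n)-d$. As $S''\subseteq S$ is stable, $\partial(S'')$ is bipartite with one side $S''$, whence $\nu(\partial(S''))\le|S''|$. Putting $T=E(P_m\square C_n)\setminus\partial(S'')$, every maximum matching $M$ satisfies $|M\cap T|=\nu-|M\cap\partial(S'')|\ge\nu-\nu(\partial(S''))\ge\nu-|S''|=d$, so $T$ is a $d$-transversal set. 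Since $|\partial(S'')|=4\cdot\frac{(m-2)(n-1)}{2}+3b$ and $|E(P_m\square C_n)|=2mn-n$, a short simplification yields $|T|=2m+3n-4-3b=3d+\frac{m+1}{2}$, giving the claimed upper bound.

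The main obstacle is the incidence bookkeeping in the upper-bound construction: one must verify that a single maximum stable set simultaneously attains the full interior independent set of size $\frac{(m-2)(n-1)}{2}$ and as many as $n-1$ boundary vertices, which requires checking that the brick pattern is compatible across the cylinder's seam (columns $n$ and $1$) for odd $n$ and across both boundary rows when $m$ is odd. Once this is pinned down, the degree count and the complement argument are routine. A secondary point is confirming $1\le b\le n-1$ throughout the stated range of $d$, which is exactly where the endpoints $\frac{m-1}{2}$ and $\frac{m-5}{2}+n$ enter: the lower endpoint guarantees $|S''|\le\alpha(P_m\square C_n)$, and the upper endpoint guarantees $b\ge 1$ so that the interior set does not already overshoot $|S''|$.
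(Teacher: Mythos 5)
Your proposal is correct and follows essentially the same route as the paper: the lower bound reuses the four almost perfect matchings of Lemma 3.16 (three pairwise disjoint ones giving $3d$, and $M_4\setminus(M_2\cup M_3)$ giving the extra $d-n+2$), and the upper bound is exactly the paper's complement construction $T=E(P_m\square C_n)\setminus\partial(S'')$, where $S''$ is the full interior brick-pattern stable set of size $\frac{(m-2)(n-1)}{2}$ together with $b=\frac{m-3}{2}+n-d$ boundary vertices of degree $3$. If anything, your write-up is more explicit than the paper's, since you spell out the brick pattern (which the paper's listing of the interior stable set leaves ambiguous) and verify the seam compatibility and the range $1\le b\le n-1$.
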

\begin{proof}
Let $S=\{x_{11}, x_{13},\dots, x_{1,n-2}, x_{m1}, x_{m3}, \dots, x_{m,n-2}\}$ 
be a stable set of $P_m\square C_n$, 
and let $S'\subseteq S$ with $|S'|=\frac{m-3}{2}+n-d$. 
When $\frac{m-1}{2}\leq d\leq \frac{m-5}{2}+n$, 
define $G$ as the spanning subgraph of $P_m\square C_n$ satisfying
$$\begin{aligned}
E(G)=\partial(\{x_{22},x_{24},\dots, x_{2,{n-1}},\dots, x_{m-1,2},x_{m-1,4},\dots, x_{m-1,n-1}\})\cup \partial(S').
\end{aligned}$$
Then $\nu(G)\leq \frac{mn-1}{2}-d$. 
Let $T=E(P_m\square C_n)\backslash E(G)$. 
For every almost perfect matching $M$ of $P_m\square C_n$, 
we have $|T\cap M|\geq d$. 
Thus $T$ is a $d$-transversal set with
$$\begin{aligned}
|T|=2mn-n-4\frac{(m-2)(n-1)}{2}-3(\frac{m-3}{2}+n-d)=3d+\frac{m+1}{2}.
\end{aligned}$$
Hence, $\tau_{d}(P_m\square C_n)\leq 3d+\frac{m+1}{2}$.

Similarly to Lemma 3.16, 
we have $\tau_{d}(P_m\square C_n)\geq 4d-n+2$. 
Furthermore, any $d$-transversal set $T'$ must satisfy $|T'|\geq 3d$ 
because $P_m\square C_n$ contains three disjoint almost perfect matchings $M_1$, $M_2$ and $M_3$. 
Therefore, max$\{3d, 4d-n+2\}\leq \tau_{d}(P_m\square C_n)$.
\end{proof}

The upper bound in Lemma 3.17 is sharp. 
Consider the following example.

\noindent\textbf{Example.} Consider the graph $P_3\square C_3$ for $d=1$. 
By Lemma 3.17, we have $3\leq \tau_{1}(P_3\square C_3)\leq 5$. 
A set $T$ is a $1$-transversal set if and only if no almost perfect matching of 
$P_3\square C_3$ contains $4$ edges in $E(P_3\square C_3)\backslash T$. 
It can be verified that the minimum $1$-transversal set (illustrated in Fig.\hspace{0.06cm}\ref{I}) 
has size 5. Thus, $\tau_{1}(P_3\square C_3)=5$.

\begin{figure}[!htbp]
\begin{center}
\includegraphics[totalheight=2cm]{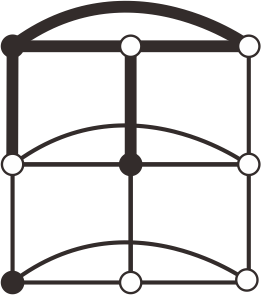}
\caption{\label{I}\small{A minimum $1$-transversal set (thick edges) of size $5$.}}
\end{center}
\end{figure}

A \textit{Hamiltonian path} (resp. cycle) of a graph $G$ is a spanning path (resp. cycle) that contains every vertex of $G$.

\begin{Lem}
For odd integers $m\geq 5$ and $n\geq 3$, 
$max\{3d, 4d-n+2\}\leq \tau_{d}(P_m\square C_n)\leq 4d+1$ if $1\leq d\leq \frac{m-3}{2}$.
\end{Lem}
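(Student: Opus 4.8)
The plan is to prove the two inequalities separately, recycling the almost perfect matchings of Lemmas 3.16--3.17 for the lower bound and introducing a ``boundary-of-an-independent-set'' construction for the upper bound.

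For the lower bound $\max\{3d,4d-n+2\}\le\tau_d(P_m\square C_n)$, I would reuse exactly the matchings from Lemma 3.16. Since $m$ and $n$ are odd, $P_m\square C_n$ carries three pairwise disjoint almost perfect matchings $M_1,M_2,M_3$; as any $d$-transversal $T'$ meets each $M_i$ in at least $d$ edges, disjointness gives $|T'|\ge\sum_{i=1}^{3}|T'\cap M_i|\ge 3d$. Adjoining the fourth almost perfect matching $M_4$ of Lemma 3.16, whose overlaps with $M_2,M_3$ total $\frac{n-1}{2}+\frac{n-3}{2}=n-2$ edges while $M_1\cap M_4=\emptyset$, the bound $|T'\cap(M_4\setminus(M_2\cup M_3))|\ge d-(n-2)$ upgrades this to $|T'|\ge 3d+(d-n+2)=4d-n+2$. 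Taking the larger of the two estimates gives the claim; no matchings beyond those already displayed are needed.

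The upper bound rests on an observation that substitutes for Proposition 2.1 in the absence of strongly covered vertices (here $S(P_m\square C_n)=\emptyset$): if $W$ is an independent set with $|W|=d+1$, then $\partial(W)$ is a $d$-transversal set. Indeed, a maximum matching $M$ is almost perfect, hence leaves exactly one vertex uncovered, so at least $d$ vertices of $W$ are covered; as $W$ is independent, each such vertex is matched across $W$ and contributes a distinct edge of $\partial(W)$, whence $|M\cap\partial(W)|\ge d$. This gives $\tau_d\le|\partial(W)|=\sum_{w\in W}\deg_G(w)$, and I would then choose $W$ to minimize this degree sum by favouring the degree-$3$ boundary vertices of rows $1$ and $m$. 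For odd $n\ge 5$ there are at least three mutually independent boundary vertices, so taking three of degree $3$ together with $d-2$ interior vertices of degree $4$ (all mutually independent, which is possible since $d+1\le\frac{m-1}{2}\le\alpha(P_m\square C_n)$) yields $|\partial(W)|=3\cdot 3+4(d-2)=4d+1$, as required.

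Two extreme cases need separate care. For $d=1$ the set $W$ has only two vertices, so I would drop independence and take two adjacent boundary vertices $x,y$ with $T=\partial(x)\cup\partial(y)$, of size $3+3-1=5=4d+1$: every maximum matching covers at least one of $x,y$, and the covering edge lies in $T$ (even when $M$ uses $xy$ itself), so $T$ is a $1$-transversal. The genuinely delicate case, which I expect to be the main obstacle, is $n=3$ with $d\ge 2$: the triangle-rows supply only two independent degree-$3$ vertices, so the independent-set construction delivers only $4d+2$, and trying to save the last edge by making two chosen vertices adjacent destroys a ``unit'' of coverage (a matching using the shared edge then covers two vertices of $W$ with a single edge of $T$). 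Shaving this final edge for $P_m\square C_3$ appears to require a global argument adapted to the $C_3$-columns---for instance a parity or Gallai--Edmonds analysis of the vertical $3$-edge cuts, or the Hamiltonian path just introduced---rather than the purely local independent-set bound, and this is where the real work will lie.
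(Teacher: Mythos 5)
Your lower bound is exactly the paper's: it reuses the four almost perfect matchings of Lemma 3.16 together with the disjointness/overlap counting, and it is correct. Your upper bound, however, has a genuine gap, which you yourself flag: the case $n=3$, $d\geq 2$ (non-vacuous as soon as $m\geq 7$). In $P_m\square C_3$ every row is a triangle, so an independent set contains at most one vertex from row $1$ and at most one from row $m$; hence any independent $W$ with $|W|=d+1$ has degree sum at least $2\cdot 3+4(d-1)=4d+2$. This is not a defect of your execution but of the method itself: the purely independent-set bound can never reach $4d+1$ when $n=3$, and you leave the missing argument as a conjecture. As written, the proposal therefore does not prove the lemma.

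The fix is the paper's construction, which you gesture at (``the Hamiltonian path just introduced'') and which you already use implicitly for $d=1$: abandon independence and take $T=\partial(u_1)\cup\partial(u_2)\cup\cdots\cup\partial(u_{2d})$, where $u_1,u_2,\dots$ are the first $2d$ vertices of the snake-like Hamiltonian path $x_{11}x_{12}\cdots x_{1n}x_{2n}x_{2,n-1}\cdots x_{21}x_{31}\cdots$ (valid since $2d\leq m-3<mn$). An almost perfect matching misses at most one of these $2d$ vertices, and each matching edge covers at most two of them, so at least $\lceil (2d-1)/2\rceil=d$ edges of $M$ are incident to $\{u_1,\dots,u_{2d}\}$ and hence lie in $T$. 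Adjacency among the chosen vertices is then not a loss but a gain: one pays for it by doubling the number of vertices from $d+1$ to $2d$, and is repaid because every induced edge is counted only once in $|T|$; a direct count (degree $3$ in row $1$, degree $4$ below, minus the induced cycle, path and vertical edges) gives $|T|\leq 4d+1$ uniformly for all $n\geq 3$, triangles included. Your observation that $|M\cap\partial(W)|\geq |W|-1$ for independent $W$ is correct and yields a clean alternative proof for $n\geq 5$, and your $d=1$ patch is precisely the paper's construction with $2d=2$; but since the lemma covers $n=3$, the single uniform Hamiltonian-path construction is what is actually needed.
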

\begin{proof}
Similarly to the proof of Lemma 3.17, 
we have $\tau_{d}(P_m\square C_n)\geq$ max$\{3d, 4d-n+2\}$. 
Let $P=u_1u_2\dots u_{mn}=x_{11}x_{12}\dots x_{1n}x_{2n}x_{2,n-1}\dots x_{21}x_{31}x_{32}\dots x_{3n} x_{4n}\dots x_{mn}$ 
denote a Hamilton path of $P_m\square C_n$. 
We construct a $d$-transversal set $T$ of $P_m\square C_n$ by taking the union of neighborhoods 
$T=\partial(u_1)\cup \partial(u_2)\cup \dots \cup\partial(u_{2d})$. 
This construction is valid since $2d\leq m-3<mn$. 
The size of $T$ satisfies $|T|\leq 4d+1$. 
Combining these results, 
we conclude max$\{3d, 4d-n+2\}\leq \tau_{d}(P_m\square C_n)\leq 4d+1$ for $1\leq d\leq \frac{m-3}{2}$.
\end{proof}

From Lemmas 3.15 to 3.18, we get the following result.

\begin{The}
Let $P_m\square C_n$ be a cylindrical grid with $m$ and $n$ odd. Then\\
1. for $n\geq 3$, $\tau_{d}(C_n)=2d+1$ if $1\leq d\leq \frac{n-1}{2}$.\\
2. for $m\geq 3$ and $n\geq 3$,\\
$(a) \hspace{0.1cm}max\{3d, 4d-n+2\}\leq \tau_{d}(P_m\square C_n)\leq 4d+1$ if $1\leq d\leq \frac{m-3}{2}$,\\
$(b) \hspace{0.1cm}max\{3d, 4d-n+2\}\leq \tau_{d}(P_m\square C_n)\leq 3d+\frac{m+1}{2}$ if $\frac{m-1}{2}\leq d\leq \frac{m-5}{2}+n$,\\
$(c) \hspace{0.1cm}\tau_{d}(P_m\square C_n)=4d-n+2$ if $\frac{m-3}{2}+n\leq d\leq \frac{mn-1}{2}$.
\end{The}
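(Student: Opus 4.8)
The plan is to recognize that Theorem 3.19 is not proved from scratch but is assembled from Lemmas 3.15--3.18, so the entire task is to match each assertion to its source lemma and to verify that the stated ranges of $d$ are exactly the ones already established. First I would record the structural facts for odd $m,n$ that underlie the indexing: since $P_m\square C_n$ has odd order $mn$ and admits an almost perfect matching, its matching number is $\nu(P_m\square C_n)=\frac{mn-1}{2}$, so the full range of interest is $1\le d\le \frac{mn-1}{2}$. Part 1 of the theorem is then immediate: it is precisely Lemma 3.15 applied to the base case $m=1$, giving $\tau_d(C_n)=2d+1$ for $1\le d\le\frac{n-1}{2}$.

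Next I would assign each subrange of Part 2 to its lemma. The common lower bound $\max\{3d,\,4d-n+2\}$ holds on all three subranges: the term $3d$ comes from the existence of three disjoint almost perfect matchings, and the term $4d-n+2$ from the four-matching counting argument established in Lemma 3.16 and reused in Lemmas 3.17 and 3.18. The upper bounds are then distributed as follows: range (a), $1\le d\le\frac{m-3}{2}$, is Lemma 3.18 with upper bound $4d+1$; range (b), $\frac{m-1}{2}\le d\le\frac{m-5}{2}+n$, is Lemma 3.17 with upper bound $3d+\frac{m+1}{2}$; and range (c), $\frac{m-3}{2}+n\le d\le\frac{mn-1}{2}$, is Lemma 3.16, where the matching upper and lower bounds collapse to the exact value $4d-n+2$. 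I would then confirm that the three subranges tile $\{1,\dots,\frac{mn-1}{2}\}$ with neither gap nor overlap: (a) ends at $\frac{m-3}{2}$ while (b) begins at $\frac{m-1}{2}=\frac{m-3}{2}+1$, and (b) ends at $\frac{m-5}{2}+n$ while (c) begins at $\frac{m-3}{2}+n=\frac{m-5}{2}+n+1$, so consecutive ranges meet at consecutive integers and together exhaust the interval up to $\nu(P_m\square C_n)$.

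The only point requiring genuine care, rather than pure bookkeeping, is the hypothesis mismatch at $m=3$: Lemma 3.18 is stated for odd $m\ge 5$, yet Part 2(a) asserts the bound for all odd $m\ge 3$. I would resolve this by noting that when $m=3$ the interval $1\le d\le\frac{m-3}{2}$ becomes $1\le d\le 0$, which is empty, so Part 2(a) is vacuously true and no invocation of Lemma 3.18 is needed; for odd $m\ge 5$ the range is nonempty and Lemma 3.18 applies directly. Thus the hard part is not any estimate at all but this vacuity observation together with the endpoint arithmetic confirming the partition of the $d$-range; once both are checked, the theorem follows by concatenating the four lemmas.
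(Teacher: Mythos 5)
Your proposal is correct and follows essentially the same route as the paper, which derives Theorem 3.19 purely by assembling Lemmas 3.15--3.18 (Part 1 from Lemma 3.15, Part 2(a) from Lemma 3.18, Part 2(b) from Lemma 3.17, Part 2(c) from Lemma 3.16). Your explicit check that the three $d$-ranges tile $\{1,\dots,\frac{mn-1}{2}\}$ and your observation that Part 2(a) is vacuous when $m=3$ (reconciling the theorem's hypothesis $m\geq 3$ with Lemma 3.18's hypothesis $m\geq 5$) are careful touches the paper leaves implicit.
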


\section{$d$-transversal number of $C_m\square C_n$}

We first give the $d$-transversal number for even order toroidal grid $C_m\square C_n$.

\begin{Lem}
Let $m\geq 4$ and $n\geq 4$ be two even integers.
Then $\tau_{d}(C_m\square C_n)=4d$ for $1\leq d\leq \frac{mn}{2}$.
\end{Lem}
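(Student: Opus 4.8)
The plan is to apply Proposition 2.3 directly. The toroidal grid $C_m\square C_n$ is $4$-regular with $mn$ vertices, and since $m$ and $n$ are both even, $mn$ is even; writing $mn=2\cdot\frac{mn}{2}$, the role of $k$ in Proposition 2.3 is played by $4$ and the role of $n$ there by $\frac{mn}{2}$. Hence it suffices to verify the two hypotheses of that proposition: that $C_m\square C_n$ admits four pairwise disjoint perfect matchings, and that $\alpha(C_m\square C_n)\geq\lceil\frac{mn}{4}\rceil$. Once both are established, Proposition 2.3 yields $\tau_{d}(C_m\square C_n)=4d$ for all $1\leq d\leq\frac{mn}{2}$ simultaneously.

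First I would exhibit the four disjoint perfect matchings. Because $n$ is even, the horizontal edges in the odd columns form one perfect matching and those in the even columns form another; because $m$ is even, the vertical edges in the odd rows form a third perfect matching and those in the even rows (including the wrap-around edges $v_{mj}=x_{mj}x_{1j}$) form a fourth. Explicitly, I take $M_1=\{h_{ij}\mid 1\leq i\leq m,\ j\ \text{odd}\}$, $M_2=\{h_{ij}\mid 1\leq i\leq m,\ j\ \text{even}\}$, $M_3=\{v_{ij}\mid i\ \text{odd},\ 1\leq j\leq n\}$ and $M_4=\{v_{ij}\mid i\ \text{even},\ 1\leq j\leq n\}$. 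In each column $j$, the edges $v_{1j},v_{3j},\dots,v_{m-1,j}$ of $M_3$ cover all $m$ vertices, and likewise $v_{2j},v_{4j},\dots,v_{mj}$ of $M_4$ cover all $m$ vertices; the two horizontal matchings are disjoint, the two vertical matchings are disjoint, and horizontal edges never coincide with vertical edges, so $M_1,M_2,M_3,M_4$ are pairwise disjoint perfect matchings.

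For the stability bound I would use bipartiteness. Since $C_k$ is bipartite exactly when $k$ is even and the Cartesian product of bipartite graphs is bipartite, the assumption that $m$ and $n$ are both even makes $C_m\square C_n$ bipartite, with colour classes $\{x_{ij}\mid i+j\ \text{even}\}$ and $\{x_{ij}\mid i+j\ \text{odd}\}$, each of size $\frac{mn}{2}$. Each colour class is a stable set, so $\alpha(C_m\square C_n)\geq\frac{mn}{2}\geq\lceil\frac{mn}{4}\rceil$, which confirms the second hypothesis.

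The argument has no essential obstacle; the only points that genuinely require the parity assumptions are the two just addressed. The wrap-around vertical edges close up into perfect matchings only because $m$ is even, the horizontal matchings close up around each $C_n$ only because $n$ is even, and the bipartite colour classes have the stated size only because both cycles are even. With the hypotheses of Proposition 2.3 thus in place, that proposition immediately gives $\tau_{d}(C_m\square C_n)=4d$ and completes the proof.
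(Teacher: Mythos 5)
Your proof is correct and follows essentially the same route as the paper: the paper's one-line proof notes that $C_m\square C_n$ is a $4$-regular bipartite graph and invokes Theorem 1.1 or Proposition 2.3, and you have simply carried out the Proposition 2.3 option in full, verifying the four disjoint perfect matchings and the stability bound explicitly. (In fact, once you observe bipartiteness, Theorem 1.1 alone finishes the job with no further verification needed.)
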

\begin{proof}
Since $C_m\square C_n$ is a 4-regular bipartite graph, 
the lemma follows immediately from Theorem 1.1 or Proposition 2.3.
\end{proof}

\begin{Lem}\rm{\cite{HK1996}}
Let $m\geq 3$ and odd $n\geq 3$. 
Then $\alpha(C_m\square C_n)=\frac{m(n-1)}{2}$.
\end{Lem}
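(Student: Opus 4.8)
The plan is to establish the two matching bounds $\alpha(C_m\square C_n)\le \frac{m(n-1)}{2}$ and $\alpha(C_m\square C_n)\ge \frac{m(n-1)}{2}$. For the upper bound I would decompose the vertex set into its $m$ rows $R_i=\{x_{i1},x_{i2},\dots,x_{in}\}$, $1\le i\le m$. Each $R_i$ induces exactly a cycle $C_n$, since it carries all horizontal edges $h_{ij}=x_{ij}x_{i,j+1}$ (indices modulo $n$), including the wrap-around edge $h_{in}$. Hence for any stable set $S$ the restriction $S\cap R_i$ is a stable set of $C_n$, so $|S\cap R_i|\le \alpha(C_n)=\frac{n-1}{2}$ because $n$ is odd; summing over the $m$ disjoint rows gives $|S|\le \frac{m(n-1)}{2}$.

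For the lower bound I would build a stable set meeting every row in exactly $\frac{n-1}{2}$ columns, so that the supports assigned to vertically adjacent rows are disjoint (avoiding the vertical edges $v_{ij}$) while each support is itself stable in $C_n$ (avoiding horizontal edges). Set $A=\{1,3,\dots,n-2\}$ and $B=\{2,4,\dots,n-1\}$, two disjoint maximum stable sets of the column cycle $C_n$, each of size $\frac{n-1}{2}$ and both avoiding column $n$. When $m$ is even I would simply assign $A$ to the odd rows and $B$ to the even rows: consecutive rows then carry disjoint supports, and the cyclic pair (row $m$, row $1$) carries $(B,A)$, again disjoint, so the resulting set is stable, has size $\frac{m(n-1)}{2}$, and matches the upper bound.

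The main obstacle is the case of odd $m$, where the cyclic sequence of rows cannot be two-coloured. Here I would replace the $A,B$ alternation by a staircase of rotations: fix $I=\{0,2,\dots,n-3\}\subseteq \mathbb{Z}_n$ (a maximum stable set of $C_n$) and assign to row $i$ the support $S_i=(s_i+I)\bmod n$, where successive shifts $s_i$ differ by $\pm 1$. A short computation shows the difference set $I-I$ omits precisely $\pm 1 \bmod n$, so any two supports whose shifts differ by $\pm 1$ are disjoint, which handles all vertical constraints between consecutive rows. The delicate point, and the crux of the whole argument, is to choose the sequence of $\pm 1$ steps so that it closes up consistently around the row-cycle $C_m$ of odd length, i.e. so that the supports of row $m$ and row $1$ are also disjoint; this amounts to realizing the net shift $0 \bmod n$ by a balanced mixture of up- and down-steps over $m$ rows, and is exactly where the parity of $m$ interacts with $n$. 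Once such a closing staircase is exhibited, each row again contributes $\frac{n-1}{2}$ vertices, yielding $\alpha(C_m\square C_n)\ge \frac{m(n-1)}{2}$ and completing the proof.
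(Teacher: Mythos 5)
Your upper bound (rows induce copies of $C_n$, so each row contributes at most $\frac{n-1}{2}$) is correct, and your even-$m$ construction is complete and correct. But the odd-$m$ case --- which you yourself identify as ``the crux of the whole argument'' --- is precisely where the proposal stops, and it cannot be completed as stated. A staircase of $m$ steps, each $\pm 1$, closes up around the row-cycle only if some integer $k$ with $0\le k\le m$ satisfies $2k-m\equiv 0 \pmod n$. Since $m$ and $n$ are both odd, the net shift $2k-m$ is odd, hence must be a nonzero odd multiple of $n$; this forces $m\ge n$. So no closing staircase exists when $m$ is odd and $m<n$. This is not a defect of your particular construction: the formula itself fails there. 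In $C_3\square C_5$ (rows are $5$-cycles, columns are triangles) a stable set contains at most one vertex per column, so $\alpha(C_3\square C_5)\le 5<6=\frac{3(5-1)}{2}$. In other words, the lemma read literally, with no relation imposed between $m$ and $n$, is false for odd $m<n$; it holds for even $m$ (any odd $n$) and for odd $m\ge n$, which are exactly the cases the paper uses (it assumes WLOG $m\ge n$ when both are odd, and the lemma itself is quoted from \cite{HK1996} without an internal proof to compare against).

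The repair, once the hypothesis $m\ge n$ is available in the odd case, is short: take $k=\frac{m+n}{2}$ steps of $+1$ and $m-k=\frac{m-n}{2}$ steps of $-1$ (both nonnegative integers precisely because $m\ge n$ and $m\equiv n\pmod 2$), in any order. The net shift is then $2k-m=n\equiv 0\pmod n$, so the staircase closes; your difference-set computation (that $I-I$ omits exactly $\pm 1 \bmod n$, which I checked and is correct) handles every cyclically consecutive pair of rows, each row contributes $|I|=\frac{n-1}{2}$ vertices, and the lower bound $\frac{m(n-1)}{2}$ follows. You should state this hypothesis explicitly, since otherwise a reader will (correctly) object that your claimed conclusion contradicts the column bound $\alpha\le \frac{n(m-1)}{2}$ whenever $m$ is odd and $m<n$.
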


\begin{Lem}\rm{\cite{ABS1990}}
Every toroidal grid $C_m\square C_n$ with $m\geq 3$ and $n\geq 3$ has two disjoint Hamiltonian cycles.
\end{Lem}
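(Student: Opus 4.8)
The statement asserts a \emph{Hamiltonian decomposition} of the $4$-regular graph $C_m\square C_n$: since any two edge-disjoint Hamiltonian cycles together contain $2mn$ edges and $|E(C_m\square C_n)|=2mn$, finding two edge-disjoint Hamiltonian cycles is the same as partitioning $E(C_m\square C_n)$ into two spanning cycles. The plan is therefore to exhibit one spanning cycle $H_1$ and to check that the complementary edge set $H_2:=E(C_m\square C_n)\setminus E(H_1)$ is again a single spanning cycle. I set up coordinates $(i,j)$ with $i\in\mathbb{Z}_m$ and $j\in\mathbb{Z}_n$, where the horizontal edges $(i,j)(i,j+1)$ form $m$ disjoint $n$-cycles (one per row) and the vertical edges $(i,j)(i+1,j)$ form $n$ disjoint $m$-cycles (one per column); neither family alone is connected, so each of $H_1,H_2$ must mix the two edge types.

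For the construction I would build $H_1$ as a \emph{serpentine} (staircase) spanning cycle: delete exactly one horizontal edge from each row-cycle to turn it into a path, and splice consecutive row-paths together with vertical edges, choosing the deleted edges and the connecting columns so that the result closes up into a single $mn$-cycle. The complement $H_2$ then consists of the $m$ deleted horizontal edges together with all vertical edges not used by $H_1$; by construction every vertex meets exactly two edges of each cycle, so both subgraphs are automatically $2$-regular and the whole problem reduces to \emph{connectivity}. Because $C_m\square C_n\cong C_n\square C_m$, I may assume $n$ is even whenever at least one of $m,n$ is even, which lets the serpentine close cleanly; the genuinely delicate case is when $m$ and $n$ are both odd. (An alternative, non-constructive route is to observe that $C_m\square C_n$ is a connected $4$-regular Cayley graph of the abelian group $\mathbb{Z}_m\times\mathbb{Z}_n$ with generators $\pm(1,0),\pm(0,1)$, for which a Hamiltonian decomposition is known; this is in effect the content of \cite{ABS1990}.)

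The main obstacle is precisely this connectivity check, not the regularity: a $2$-regular spanning subgraph is always a disjoint union of cycles, and the real work is to guarantee that $H_1$ and \emph{simultaneously} its complement $H_2$ each form one $mn$-cycle rather than several. Naive diagonal or serpentine choices tend to break into several shorter cycles (governed by $\gcd(m,n)$ and the parities) in the all-odd case, so I would finish by introducing a small number of local ``switches'' — replacing a pair of parallel edges shared between the current cycles by the crossing pair — that merge two components of $H_1$ while keeping $H_2$ a single cycle, iterating until both are Hamiltonian. Verifying that such switches can always be arranged when $m$ and $n$ are both odd, and that they preserve edge-disjointness, is where the actual content of the proof resides.
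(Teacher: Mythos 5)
The paper offers no proof of this lemma at all: it is quoted directly from \cite{ABS1990} (the result itself goes back to Kotzig's theorem that the Cartesian product of two cycles decomposes into two Hamiltonian cycles), so there is no internal argument to compare against. Judged on its own terms, your write-up is a plan rather than a proof, and the gap sits exactly where you locate "the actual content." Two concrete problems. First, the row-serpentine $H_1$ you describe uses only $m$ vertical edges (one connector per consecutive pair of rows), so every column of the torus that contains no connector survives in the complement $H_2$ as an intact $m$-cycle; since there are $n$ columns and only $m$ connectors, $H_2$ is disconnected unless the connectors are spread to hit every column, which already forces $m\geq n$ and pushes you to the diagonal/staircase pattern whose connectivity, as you yourself note, degenerates according to $\gcd(m,n)$ and the parities. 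So even the "easy" case is not actually closed by your construction. Second, and more seriously, the repair step is asserted, not proved: a $2$-switch (exchanging a parallel pair of edges between $H_1$ and $H_2$ for the crossing pair) that merges two components of $H_1$ can simultaneously cut $H_2$ into two components. You give no invariant, no counting argument, and no termination argument showing that a sequence of switches exists that makes \emph{both} subgraphs single spanning cycles at once; that simultaneous control is precisely the theorem, and it is what Kotzig's explicit construction for the odd-by-odd torus accomplishes.

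The fallback you mention in parentheses --- that $C_m\square C_n$ is a connected $4$-regular Cayley graph of $\mathbb{Z}_m\times\mathbb{Z}_n$ and such graphs are known to admit Hamiltonian decompositions --- is not a proof either; it re-cites the very result in question (indeed, it is essentially how the paper uses \cite{ABS1990}). If you want a self-contained argument, you need to write down an explicit decomposition valid for all $m,n\geq 3$, including $m,n$ both odd, and verify connectivity of both factors; nothing in your proposal yet does this.
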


\begin{Lem}
For even $m\geq 4$ and odd $n\geq 3$, 
$\tau_{d}(C_m\square C_n)=4d$ for $1\leq d\leq \frac{mn}{2}$.
\end{Lem}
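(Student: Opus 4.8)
The plan is to apply Proposition 2.3 with $k=4$. The graph $G=C_m\square C_n$ is $4$-regular, and since $m$ is even its order $mn$ is even; writing $mn=2N$ with $N=\frac{mn}{2}$, the target range $1\leq d\leq \frac{mn}{2}$ coincides with the range $1\leq d\leq N$ in the proposition. It therefore suffices to verify the two hypotheses of Proposition 2.3: that $G$ possesses four pairwise edge-disjoint perfect matchings, and that $\alpha(G)\geq \lceil \frac{N}{2}\rceil=\lceil \frac{mn}{4}\rceil$.

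For the four disjoint perfect matchings, I would invoke Lemma 4.3, which supplies two edge-disjoint Hamiltonian cycles $H_1$ and $H_2$ of $G$. Because $mn$ is even, each $H_i$ is an even cycle and hence decomposes into two perfect matchings, namely the two classes of alternate edges along the cycle. This produces four perfect matchings in total; they are pairwise edge-disjoint, since the two matchings arising from a single $H_i$ partition the edge set of that cycle, while any matching coming from $H_1$ and any coming from $H_2$ live in disjoint edge sets. Thus $G$ has four disjoint perfect matchings, which also forces $\nu(G)=\frac{mn}{2}$ and establishes the first hypothesis.

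For the stability bound, Lemma 4.2 yields $\alpha(G)=\frac{m(n-1)}{2}$, which is an integer because $m$ is even. A direct comparison shows that $\frac{m(n-1)}{2}\geq \frac{mn}{4}$ is equivalent to $n\geq 2$, and this holds since $n\geq 3$. As $\alpha(G)$ is an integer that is at least $\frac{mn}{4}$, it is at least $\lceil \frac{mn}{4}\rceil$, confirming the second hypothesis. Applying Proposition 2.3 then gives $\tau_{d}(C_m\square C_n)=4d$ for $1\leq d\leq \frac{mn}{2}$, as claimed.

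The argument is essentially a verification of the hypotheses of Proposition 2.3, so there is no single deep obstacle; the one step demanding care is the passage from Hamiltonian cycles to perfect matchings, which relies crucially on $mn$ being even (guaranteed by $m$ even) so that each Hamiltonian cycle has even length and splits cleanly into two perfect matchings. Were $mn$ odd, a Hamiltonian cycle would not decompose this way and the construction would break down, which is precisely why this clean formula is confined to the even-order case treated here.
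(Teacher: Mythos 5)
Your proof is correct and follows essentially the same route as the paper: both verify the hypotheses of Proposition 2.3 by citing Lemma 4.2 for the stability number and Lemma 4.3 for the four disjoint perfect matchings. The only difference is that you explicitly justify splitting each even Hamiltonian cycle into two perfect matchings (a step the paper leaves implicit), which is a welcome bit of added rigor.
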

\begin{proof}
By Lemma 4.2, $\alpha(C_m\square C_n)= \frac{m(n-1)}{2} \geq \frac{mn}{4}+1> \lceil \frac{mn}{4}\rceil$ 
for even $m\geq 4$ and odd $n\geq 3$. 
By Lemma 4.3, $C_m\square C_n$ admits 4 disjoint perfect matchings.
From Proposition 2.3, it follows that $\tau_{d}(C_m\square C_n)=4d$ for $1\leq d\leq \frac{mn}{2}$.
\end{proof}

Next we consider odd order toroidal grid $C_m\square C_n$ with $m, n$ odd 
and at least 3. Without loss of generality, assume $m\geq n$.

\begin{Lem}
For odd integers $m\geq 3$ and $n\geq 3$, 
$\tau_{d}(C_m\square C_n)= 4d+2$ if $\frac{m-1}{2}\leq d\leq \frac{mn-1}{2}$.
\end{Lem}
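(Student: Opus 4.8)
The plan is to prove the two inequalities $\tau_{d}(C_m\square C_n)\le 4d+2$ and $\tau_{d}(C_m\square C_n)\ge 4d+2$ separately, exploiting throughout that $C_m\square C_n$ has odd order $mn$, so that $\nu(C_m\square C_n)=\frac{mn-1}{2}$; a near-perfect matching of this size exists because the graph is Hamiltonian, while $\nu\le\lfloor mn/2\rfloor=\frac{mn-1}{2}$ gives the reverse bound.

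For the upper bound I would reuse the complement-of-a-neighbourhood construction from Proposition 2.3 and Lemma 3.16. Let $S$ be a maximum stable set, which by Lemma 4.2 has size $\frac{m(n-1)}{2}$, and pick $S'\subseteq S$ with $|S'|=\frac{mn-1}{2}-d$. This choice is legitimate because the hypothesis $\frac{m-1}{2}\le d\le\frac{mn-1}{2}$ forces $0\le|S'|\le\alpha(C_m\square C_n)$. Setting $T=E(C_m\square C_n)\setminus\partial(S')$ and using that the graph is $4$-regular with $S'$ independent gives $|\partial(S')|=4|S'|$, so $|T|=2mn-4|S'|=4d+2$. Since every maximum matching $M$ meets $\partial(S')$ in at most $|S'|$ edges (each such edge covers a distinct vertex of $S'$), we obtain $|M\cap T|\ge\frac{mn-1}{2}-|S'|=d$; hence $T$ is a $d$-transversal set and $\tau_{d}(C_m\square C_n)\le 4d+2$.

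The heart of the argument is the lower bound, for which I would decompose the edge set along Hamiltonian cycles. By Lemma 4.3, $C_m\square C_n$ contains two edge-disjoint Hamiltonian cycles $H_1,H_2$; since the graph is $4$-regular these satisfy $E(C_m\square C_n)=E(H_1)\cup E(H_2)$, and each $H_i$ is an odd cycle isomorphic to $C_{mn}$. The key observation is that every maximum matching of $H_i$ is a near-perfect matching of size $\frac{mn-1}{2}=\nu(C_m\square C_n)$, hence a maximum matching of the whole torus. Consequently, for any $d$-transversal set $T$ of $C_m\square C_n$, the restriction $T\cap E(H_i)$ is a $d$-transversal set of $H_i\cong C_{mn}$: every maximum matching $M$ of $H_i$ lies inside $E(H_i)$ and is maximum in the torus, so $|(T\cap E(H_i))\cap M|=|T\cap M|\ge d$. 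Applying Lemma 3.15 to the odd cycle $C_{mn}$ (valid since $1\le d\le\frac{mn-1}{2}$) yields $|T\cap E(H_i)|\ge 2d+1$ for $i=1,2$. As $H_1$ and $H_2$ are edge-disjoint, summing gives $|T|\ge(2d+1)+(2d+1)=4d+2$, and combining with the upper bound completes the proof.

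The main obstacle, and really the one creative step, is recognising that the Hamiltonian decomposition furnished by Lemma 4.3 converts the torus problem into two independent copies of the odd-cycle problem already solved in Lemma 3.15, and then verifying carefully that $T\cap E(H_i)$ inherits the $d$-transversal property. This inheritance hinges precisely on the maximum matchings of each Hamiltonian cycle being maximum matchings of the ambient graph, which uses the odd order; the remaining computations (the edge counts and the range conditions on $d$) are routine.
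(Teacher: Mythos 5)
Your proposal is correct and takes essentially the same route as the paper: the upper bound via $T=E(C_m\square C_n)\setminus\partial(S')$ for a stable set $S'$ of size $\frac{mn-1}{2}-d$ (whose existence follows from Lemma 4.2 precisely when $d\geq\frac{m-1}{2}$), and the lower bound by restricting an arbitrary $d$-transversal set to the two edge-disjoint Hamiltonian cycles of Lemma 4.3, noting their almost perfect matchings are maximum matchings of the torus, and invoking Lemma 3.15. No gaps; your justification that $|M\cap\partial(S')|\leq|S'|$ is a slightly more explicit rendering of the paper's $\nu(G)\leq\frac{mn-1}{2}-d$ step.
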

\begin{proof}
Let $\frac{m-1}{2}\leq d\leq \frac{mn-1}{2}$. 
Let $G$ be a spanning subgraph of $C_m\square C_n$ satisfying $E(G)=\partial(S)$, 
where $S$ is a stable set of $C_m\square C_n$ with $|S|=\frac{mn-1}{2}-d$. 
By Lemma 4.2, this construction is valid since $\frac{mn-1}{2}-d \leq \frac{m(n-1)}{2}$. 
Thus $\nu(G)\leq \frac{mn-1}{2}-d$. Let $T=E(C_m\square C_n)\backslash \partial(S)$. 
Then, for every almost perfect matching $M$ of $C_m\square C_n$, we have $|T\cap M|\geq d$. 
Consequently, $T$ is a $d$-transversal set satisfying $|T|=2mn-4(\frac{mn-1}{2}-d)=4d+2$. 
Hence, $\tau_{d}(C_m\square C_n)\leq 4d+2$.

By Lemma 4.3, $C_m\square C_n$ contains two disjoint Hamiltonian cycles $\mathcal{C}_1$ and $\mathcal{C}_2$. 
Let $T'$ be any $d$-transversal set of $C_m\square C_n$. 
We claim that $T'\cap E(\mathcal{C}_i)$ is a $d$-transversal set of $\mathcal{C}_i$ for $i=1, 2$. 
If not, then there exists an almost perfect matching $M_i$ of $\mathcal{C}_i$ 
such that $|M_i\cap (T'\cap E(\mathcal{C}_i))|<d$. 
Since $M_i$ is also an almost perfect matching of $C_m\square C_n$, 
this implies $|M_i\cap T'|=|M_i\cap (T'\cap E(\mathcal{C}_i))|<d$, a contradiction. 
By Lemma 3.15, we have $|T'\cap E(\mathcal{C}_i)|\geq \tau_{d}(\mathcal{C}_i)=2d+1$ for $i=1, 2$. 
Therefore, $|T'|=|T'\cap E(\mathcal{C}_1)|+|T'\cap E(\mathcal{C}_2)|\geq 2(2d+1)= 4d+2$. 
Consequently, $\tau_{d}(C_m\square C_n)= 4d+2$ for $\frac{m-1}{2}\leq d\leq \frac{mn-1}{2}$.
\end{proof}

\begin{Lem}\rm{\cite{HTLM2017}}
Let $m\geq 5$ and $n\geq 3$ be two odd integers. 
Then $mp(C_m\square C_n)=7$.
\end{Lem}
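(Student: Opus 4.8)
The plan is to establish $mp(C_m\square C_n)\le 7$ and $mp(C_m\square C_n)\ge 7$ separately, using that $mn$ is odd, so the maximum matchings are exactly the almost perfect ones and a matching preclusion set is an edge set whose deletion drops the matching number to at most $\frac{mn-3}{2}$. For the upper bound I would fix any edge $uv$ and delete $F=\partial(u)\cup\partial(v)$; since $u,v$ are adjacent they share one edge, so $|F|=4+4-1=7$, and in $C_m\square C_n-F$ both $u$ and $v$ are isolated. The remaining $mn-2$ vertices are odd in number, so every matching there misses a further vertex; hence every matching of $C_m\square C_n-F$ misses at least three vertices and has size at most $\frac{mn-3}{2}$, giving $mp(C_m\square C_n)\le 7$.

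For the lower bound I would assume an $F$ with $|F|\le 6$ killing every almost perfect matching and derive a contradiction. Then the deficiency of $C_m\square C_n-F$ is odd and at least $3$, so by the Tutte--Berge formula some set $S$ has $o\big((C_m\square C_n-F)-S\big)\ge |S|+3$; let $C_1,\dots,C_c$ be its odd components, $c\ge|S|+3$. Each edge of $C_m\square C_n$ leaving a $C_i$ ends in $S$ or lies in $F$, and by $4$-edge-connectivity $|\partial(C_i)|\ge 4$. With $s_i,f_i$ counting these two kinds of boundary edges, $\sum_i s_i\le 4|S|$ and $\sum_i f_i\le 2|F|\le 12$, so
$$4c\le\sum_{i=1}^{c}|\partial(C_i)|=\sum_i s_i+\sum_i f_i\le 4|S|+12,$$
which forces $c=|S|+3$ with equality throughout: $|F|=6$, each $|\partial(C_i)|=4$, every edge meeting $S$ goes to an odd component (so $S$ is independent and sends nothing to an even component), and every edge of $F$ joins two odd components.

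The crux is to upgrade $|\partial(C_i)|=4$ to ``$C_i$ is a single vertex''. For this I would invoke that $C_m\square C_n$ is super-edge-connected for $m\ge5,\ n\ge3$, i.e.\ every minimum $4$-edge cut isolates a vertex: then $\partial(C_i)$ isolates some $w$, and as $C_i$ has odd order it is not the even set $V\setminus\{w\}$, so $C_i=\{w\}$. Thus $(C_m\square C_n-F)-S$ is a set of isolated vertices with no even components, whence $|V\setminus S|=c=|S|+3$ and $|S|=\frac{mn-3}{2}$. But tightness already made $S$ independent, while by Lemma 4.2
$$|S|=\frac{mn-3}{2}=\alpha(C_m\square C_n)+\frac{m-3}{2}>\alpha(C_m\square C_n)\qquad(m\ge5),$$
so $S$ exceeds the stability number and cannot be independent, a contradiction. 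Hence $|F|\ge7$, and with the upper bound $mp(C_m\square C_n)=7$.

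I expect the main obstacle to be the super-edge-connectivity step: ruling out a non-singleton connected set of edge-boundary exactly $4$ needs a real edge-isoperimetric estimate on the torus --- e.g.\ projecting a candidate onto the row-cycles $C_n$ and column-cycles $C_m$ and showing any proper connected subset of size at least two has boundary at least $6$ when $m\ge5$ --- with the thin case $n=3$ checked separately. As a sharpness check, the two edge-disjoint Hamiltonian cycles of Lemma 4.3 are odd cycles, each of matching preclusion number $3$ by Lemma 3.15, and being edge-disjoint they only yield $mp(C_m\square C_n)\ge 3+3=6$; it is the extra $4$-regular structure, exploited through the Tutte--Berge tightness, that supplies the seventh edge.
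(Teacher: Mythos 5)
The paper itself contains no proof of this lemma: it is imported verbatim from \cite{HTLM2017}, so there is no internal argument to compare yours against, and what you have written is a genuinely self-contained alternative. Your structure is sound. The upper bound is correct: $\partial(u)\cup\partial(v)$ for adjacent $u,v$ has $4+4-1=7$ edges, and parity of $mn-2$ forces every matching of the remaining graph to miss at least three vertices. The lower bound via Berge--Tutte is also correct: with $|F|\le 6$, the count $4(|S|+3)\le 4c\le\sum_i|\partial(C_i)|\le 4|S|+2|F|\le 4|S|+12$ forces equality throughout, and the tightness conditions together with super edge-connectivity reduce everything to a stable set $S$ of size $\frac{mn-3}{2}$, contradicting Lemma 4.2 exactly when $m\ge 5$. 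Two small points are worth making explicit when you write this up: (i) the exclusion of even components needs one line (an even component would have all boundary edges either in $F$, whose edges join odd components only, or incident to $S$, whose edges go to odd components only, so it would be disconnected from the rest of the torus); (ii) your argument pinpoints why $m=3$ is excluded from the statement: for $C_3\square C_3$ one has $\frac{mn-3}{2}=3=\alpha(C_3\square C_3)$, no contradiction arises, and indeed deleting the $6$ edges inside the complement of a maximum stable set shows $mp(C_3\square C_3)\le 6$.

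The one genuine gap is the one you flag yourself: that every minimum ($4$-edge) cut of $C_m\square C_n$ with a connected side isolates a vertex. This claim is true, your projection sketch does close it, and in fact no special treatment of $n=3$ (nor the hypothesis $m\ge 5$) is needed there. Suppose $A$ is connected with $2\le|A|\le mn-2$ and $|\partial(A)|=4$. Call a row or column \emph{partial} if $A$ meets it in a nonempty proper subset; each partial row (a copy of $C_n$) contributes at least $2$ horizontal boundary edges and each partial column at least $2$ vertical ones, so at most two lines are partial. If no row is partial, $A$ is a union of full rows and $|\partial(A)|\ge 2n\ge 6$; if no column is partial, $|\partial(A)|\ge 2m\ge 6$. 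So exactly one row $i_0$ and one column $j_0$ are partial. If all rows other than $i_0$ were empty, then $A$ would lie inside row $i_0$ and each of the $|A|\ge 2$ columns meeting $A$ would be partial, a contradiction; hence some row is full, so every column other than $j_0$ is full, so $X=V\setminus A$ is a nonempty proper subset of column $j_0$ with $|X|\ge 2$, giving $|\partial(A)|=|\partial(X)|\ge 2|X|+2\ge 6$, again a contradiction. Inserting this paragraph (or a citation to the known super edge-connectivity of toroidal/$k$-ary $n$-cube networks) makes your proof complete, at which point it stands as a legitimate replacement for the appeal to \cite{HTLM2017}.
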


\begin{Lem}
For odd integers $m\geq 5$ and $n\geq 3$, 
$\tau_{1}(C_m\square C_n)=7$.
\end{Lem}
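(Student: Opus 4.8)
The plan is to reduce the computation of $\tau_{1}(C_m\square C_n)$ to the matching preclusion number and then invoke Lemma 4.6. First I would note that since $m$ and $n$ are both odd, the order $mn$ of $C_m\square C_n$ is odd, so the graph has no perfect matching. As $C_m\square C_n$ is a connected vertex-transitive graph of odd order, it is factor-critical, and in particular it admits an almost perfect matching; hence $\nu(C_m\square C_n)=\frac{mn-1}{2}$ and every maximum matching of $C_m\square C_n$ is an almost perfect matching.

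The key step is the identity $\tau_{1}(C_m\square C_n)=mp(C_m\square C_n)$, already announced in the introduction for graphs whose maximum matchings are perfect or almost perfect. I would establish it directly: a set $T\subseteq E(C_m\square C_n)$ is a $1$-transversal set if and only if it meets every maximum matching, which by the previous paragraph is equivalent to saying that $E(C_m\square C_n)\setminus T$ contains no matching of size $\frac{mn-1}{2}$, that is, $C_m\square C_n-T$ has no almost perfect matching. Because the graph has odd order there is no perfect matching to destroy, so this is exactly the defining property of a matching preclusion set. Passing to minimum cardinalities on both sides yields $\tau_{1}(C_m\square C_n)=mp(C_m\square C_n)$.

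Finally, applying Lemma 4.6, which gives $mp(C_m\square C_n)=7$ for odd $m\geq 5$ and $n\geq 3$, I would conclude $\tau_{1}(C_m\square C_n)=7$.

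This argument carries essentially no technical obstacle: the entire quantitative content is supplied by Lemma 4.6, and the remaining work is the bookkeeping of the $\tau_{1}$--$mp$ correspondence. The only point requiring care is the verification that $C_m\square C_n$ possesses an almost perfect matching, so that its maximum matchings coincide with its almost perfect matchings; this follows from the factor-criticality of connected vertex-transitive graphs of odd order. Note that the restriction $m\geq 5$ is inherited from Lemma 4.6, since for $m=3$ the value $d=1$ already falls in the range $\frac{m-1}{2}\leq d\leq \frac{mn-1}{2}$ of Lemma 4.5, which instead gives $\tau_{1}(C_3\square C_n)=6$.
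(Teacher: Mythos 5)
Your proposal is correct and follows essentially the same route as the paper: the paper's proof is exactly the identity $\tau_{1}(C_m\square C_n)=mp(C_m\square C_n)$ (stated in the introduction for graphs with a perfect or almost perfect matching) followed by an appeal to Lemma 4.6. Your additional verification that $C_m\square C_n$ has an almost perfect matching (so that maximum matchings coincide with almost perfect matchings and the identity applies) is a sound piece of bookkeeping that the paper leaves implicit, and your closing remark correctly explains why $m\geq 5$ is needed.
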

\begin{proof}
As $\tau_{1}(C_m\square C_n)=mp(C_m\square C_n)$, 
$\tau_{1}(C_m\square C_n)=7$ by Lemma 4.6.
\end{proof}

\begin{Lem}
For odd integers $m\geq 7$ and $n\geq 3$, 
$4d+2 \leq \tau_{d}(C_m\square C_n)\leq 5d+2$ if $2\leq d\leq \frac{m-3}{2}$.
\end{Lem}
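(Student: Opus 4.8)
My plan is to prove the two inequalities separately: the lower bound $\tau_{d}(C_m\square C_n)\ge 4d+2$ by repeating the argument of Lemma 4.5, and the upper bound $\tau_{d}(C_m\square C_n)\le 5d+2$ by exhibiting an explicit $d$-transversal set built from a compact rectangular block of vertices. For the lower bound I would fix, via Lemma 4.3, two edge-disjoint Hamiltonian cycles $\mathcal{C}_1,\mathcal{C}_2$ of $C_m\square C_n$, each isomorphic to the odd cycle $C_{mn}$. Given an arbitrary $d$-transversal set $T'$, every almost perfect matching of $\mathcal{C}_i$ is an almost perfect (hence maximum) matching of $C_m\square C_n$, so it must meet $T'\cap E(\mathcal{C}_i)$ in at least $d$ edges; thus $T'\cap E(\mathcal{C}_i)$ is a $d$-transversal set of $\mathcal{C}_i$. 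Since $1\le d\le \tfrac{m-3}{2}\le \tfrac{mn-1}{2}$, Lemma 3.15 gives $|T'\cap E(\mathcal{C}_i)|\ge \tau_{d}(C_{mn})=2d+1$ for $i=1,2$, and edge-disjointness yields $|T'|\ge 2(2d+1)=4d+2$.

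For the upper bound the first step is to record that, because $mn$ is odd, every maximum matching of $C_m\square C_n$ is almost perfect, missing exactly one vertex. I then claim that for \emph{any} vertex set $U$ with $|U|=2d$, the edge set $T=\bigcup_{u\in U}\partial(u)$ of all edges incident to $U$ is a $d$-transversal set. Indeed, for a maximum matching $M$ write $a$ for the number of $M$-edges with both ends in $U$ and $b$ for the number with exactly one end in $U$; then $2a+b$ is the number of vertices of $U$ covered by $M$, so $2a+b\ge 2d-1$, while $|M\cap T|=a+b$. If $a+b\le d-1$, then $2a+b=(a+b)+a\le (d-1)+d=2d-1$ would force $a=d$ and $b=-1$, which is impossible; hence $|M\cap T|=a+b\ge d$. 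Note this is not an instance of Proposition 2.1, since $U$ is far from stable and no vertex is strongly covered; it is a direct counting argument valid for any $2d$-set.

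The remaining step is to choose $U$ so that $T$ is small. Since $C_m\square C_n$ is $4$-regular and each edge with both ends in $U$ is double-counted, inclusion--exclusion gives $|T|=\sum_{u\in U}\deg(u)-e(U)=8d-e(U)$, where $e(U)$ is the number of edges induced by $U$; so I want a $2d$-set inducing as many edges as possible. Taking $U$ to be a $d\times 2$ block, that is, the vertices lying in $d$ consecutive rows and $2$ consecutive columns, the induced subgraph has $2(d-1)$ vertical edges and $d$ horizontal edges, whence $e(U)=3d-2$ and $|T|=8d-(3d-2)=5d+2$. This choice is legitimate since $d\le \tfrac{m-3}{2}<m$ (so the $d$ rows do not wrap around $C_m$) and $n\ge 3$ (so the two columns are distinct with a single edge between them in each row), giving $\tau_{d}(C_m\square C_n)\le 5d+2$.

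The main obstacle is the upper-bound construction. The obvious candidate, the neighbourhood-union of $2d$ consecutive vertices along a Hamiltonian cycle, only yields $|T|\le 6d+1$, which exceeds $5d+2$ as soon as $d\ge 2$; the entire improvement comes from replacing that path segment by a compact block so as to maximize the number of internal (doubly counted) edges. The one delicate verification is that the $d\times 2$ block induces exactly $3d-2$ edges with no wraparound, and this is precisely where the hypothesis $d\le \tfrac{m-3}{2}$ is used to guarantee that the $d$ chosen rows do not close up in $C_m$.
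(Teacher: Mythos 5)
Your proposal is correct and follows essentially the same route as the paper: the lower bound $4d+2$ via the two edge-disjoint Hamiltonian cycles of Lemma 4.3 combined with $\tau_d(C_{mn})=2d+1$ (Lemma 3.15), and the upper bound via exactly the paper's construction $T=\partial(x_{11})\cup\partial(x_{12})\cup\cdots\cup\partial(x_{d1})\cup\partial(x_{d2})$, the edges incident to a $d\times 2$ block, of size $5d+2$. The only (cosmetic) difference is that you certify the transversal property by a direct count of matching edges meeting the block ($2a+b\ge 2d-1$, $b\ge 0$, hence $a+b\ge d$), whereas the paper asserts $\nu(C_m\square C_n-T)=\frac{mn-1}{2}-d$ and argues through the matching number of the deleted graph; these are equivalent justifications of the same construction.
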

\begin{proof}
By Lemma 4.5, we have $\tau_{d}(C_m\square C_n)\geq 4d+2$. 
Next, since $d\leq \frac{m-3}{2}$, 
we construct a set $T$ of $C_m\square C_n$ by taking 
$\partial(x_{11})\cup \partial(x_{12})\cup \partial(x_{21})\cup \partial(x_{22}) 
\cup \dots \cup \partial(x_{d1})\cup \partial(x_{d2})$. 
Clearly, $\nu(C_m\square C_n-T)= \frac{mn-1}{2}-d$. 
Thus, for every almost perfect matching $M$ of $C_m\square C_n$, we have $|T\cap M|\geq d$. 
Therefore, $T$ is a $d$-transversal set with $|T|=5d+2$. 
Combining these results, we conclude $4d+2 \leq \tau_{d}(C_m\square C_n)\leq 5d+2$ for $2\leq d\leq \frac{m-3}{2}$.
\end{proof}

The upper bound in Lemma 4.8 is not necessarily achieved for $d=2$. 
For example, let $T$ be the set of thick edges as illustrated in Fig.\hspace{0.06cm}\ref{J}. 
We have $\nu(C_7\square C_3-T)=8$. Consequently, 
for every almost perfect matching $M$ of $C_7\square C_3$, $|T\cap M|\geq 2$. 
Therefore, $T$ is a $2$-transversal set of size 11.

\begin{figure}[!htbp]
\begin{center}
\includegraphics[totalheight=3cm]{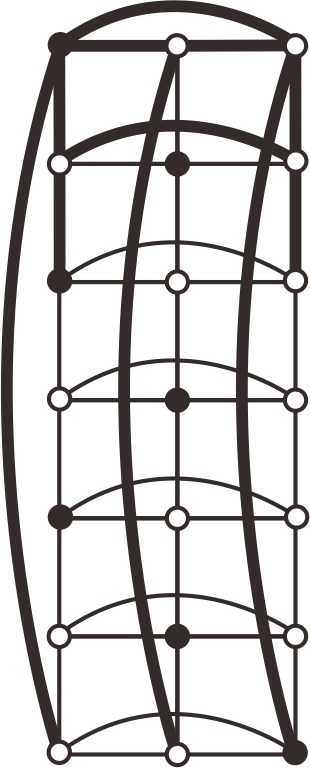}
\caption{\label{J}\small{A $2$-transversal set (thick edges) of size 11.}}
\end{center}
\end{figure}

From Lemmas 4.1, 4.4, 4.5, 4.7 and 4.8, 
we have the following result.

\begin{The}
Let $C_m\square C_n$ be a toroidal grid graph with $m\geq 3$ and $n\geq 3$. Then\\
1. for even $m\geq 4$ and even $n\geq 4$, $\tau_{d}(C_m\square C_n)=4d$ if $1\leq d\leq \frac{mn}{2}$,\\
2. for even $m\geq 4$ and odd $n\geq 3$, $\tau_{d}(C_m\square C_n)=4d$ if $1\leq d\leq \frac{mn}{2}$,\\
3. for odd $m\geq 3$ and odd $n\geq 3$,\\
(a) \hspace{0.1cm}if $m\geq 5$ and $n\geq 3$, then $\tau_{1}(C_m\square C_n)=7$,\\
(b) \hspace{0.1cm}if $m\geq 7$ and $n\geq 3$, then $4d+2 \leq \tau_{d}(C_m\square C_n)\leq 5d+2$ for $2\leq d\leq \frac{m-3}{2}$,\\
(c) \hspace{0.1cm}if $m\geq 3$ and $n\geq 3$, then $\tau_{d}(C_m\square C_n)= 4d+2$ for $\frac{m-1}{2}\leq d\leq \frac{mn-1}{2}$.\\
\end{The}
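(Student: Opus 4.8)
The plan is to assemble Theorem 4.9 directly from the five lemmas cited immediately before its statement, since each clause of the theorem is precisely one of those lemmas specialized to a parity class of $(m,n)$ together with a range of $d$. No new combinatorial construction is required here; the genuine content lives in the lemmas, and the work is entirely bookkeeping to confirm that the hypotheses match the case labels and that the stated ranges of $d$ exhaust $1\leq d\leq \nu(C_m\square C_n)=\lfloor\frac{mn}{2}\rfloor$.

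First I would dispose of the even cases, where a single lemma covers the whole range. Part~1 (even $m\geq 4$, even $n\geq 4$) is exactly Lemma~4.1, which reduces $C_m\square C_n$ to a $4$-regular bipartite graph and invokes Theorem~1.1 or Proposition~2.3 to give $\tau_d=4d$ on all of $1\leq d\leq \frac{mn}{2}$. Part~2 (even $m\geq 4$, odd $n\geq 3$) is exactly Lemma~4.4, where $\alpha(C_m\square C_n)=\frac{m(n-1)}{2}\geq \lceil\frac{mn}{4}\rceil$ together with four disjoint perfect matchings (Lemmas~4.2 and~4.3) again lets Proposition~2.3 apply. In both cases the clause transfers verbatim from the lemma.

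Next I would handle the odd--odd case, which is split across three lemmas, and this is where care is needed: one must check that the ranges tile $1\leq d\leq \frac{mn-1}{2}$ without gaps or contradictory overlaps. Clause~3(a) is Lemma~4.7 ($\tau_1=7$ for $m\geq 5$); clause~3(b) is Lemma~4.8 (the bounds $4d+2\leq \tau_d\leq 5d+2$ for $2\leq d\leq \frac{m-3}{2}$ when $m\geq 7$); clause~3(c) is Lemma~4.5 ($\tau_d=4d+2$ for $\frac{m-1}{2}\leq d\leq \frac{mn-1}{2}$ when $m\geq 3$). I would record the interval check explicitly: for $m\geq 7$ the three ranges $\{1\}$, $[2,\frac{m-3}{2}]$, $[\frac{m-1}{2},\frac{mn-1}{2}]$ are consecutive with no interior overlap and cover everything; for $m=5$ the middle range is empty since $\frac{m-3}{2}=1$, and $\{1\}\cup[2,\frac{mn-1}{2}]$ suffices; for $m=3$ clause~3(c) alone covers $[1,\frac{mn-1}{2}]$ because $\frac{m-1}{2}=1$.

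The main obstacle is therefore not any computation but making sure the advertised coverage is honest. I would verify (i) that the parity hypotheses of each lemma match the case labels exactly, and (ii) that the $d$-ranges neither leave a gap nor assert conflicting values on a shared point; the brief check above shows the intervals only ever meet at an endpoint in the degenerate small-$m$ cases already handled, so no inconsistency arises and the union is precisely $1\leq d\leq \nu$. With these verifications in place, each clause of Theorem~4.9 follows by citing the corresponding lemma, which completes the proof.
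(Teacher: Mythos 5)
Your proposal is correct and matches the paper exactly: the paper's entire ``proof'' of this theorem is the single line ``From Lemmas 4.1, 4.4, 4.5, 4.7 and 4.8, we have the following result,'' i.e.\ precisely the lemma-assembly you describe, with clause~1 from Lemma~4.1, clause~2 from Lemma~4.4, and clauses~3(a)--(c) from Lemmas~4.7, 4.8 and~4.5 respectively. Your explicit verification that the $d$-ranges tile $[1,\nu]$ without conflict is bookkeeping the paper leaves implicit, but it introduces no new method.
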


\section*{Data availability}
No data was used for the research described in the article.

\section*{Declarations}
\noindent\textbf{Conflict of interest} 
The authors declare that they have no Conflict of interest.

%
%


\end{document}